\documentclass{tac}



\usepackage{amsfonts}
\usepackage{bbm}
\usepackage{amsmath}
\usepackage[all,cmtip,2cell]{xy}
\usepackage{tikz-cd}
\usepackage{tensor}
\usepackage{adjustbox}
\usepackage{amssymb}
\usepackage{xfrac}
\usepackage{faktor}
\UseAllTwocells
\usepackage{pdflscape} 

\usepackage{xy}

\input diagxy


\usepackage[colorlinks=true]{hyperref}

\usepackage[
color=orange!80, 
bordercolor=black,
textwidth=3cm,
textsize=small,
colorinlistoftodos]
{todonotes}

\hypersetup{allcolors=[rgb]{0.1,0.1,0.4}}


\author{Phillip M Bressie}


\thanks{}

\address{Department of Mathematics, Spring Hill College\\
 4000 Dauphin St, Mobile, AL, 36608\\[5pt]
 }

\title {The $\omega$-categorification of Algebraic Theories}


\copyrightyear{2023}


\keywords{Operads, PROs, Higher Categories, Globular Sets, Categorification}
\amsclass{18C10,18C15,18C20,18D05,18D10,18D15,18D20,18D50}

\eaddress{pbressie@shc.edu}


\newcommand{\catname}[1]{{\normalfont\textbf{#1}}}
\newcommand{\Set}{\catname{Set}}

\newcommand{\Cat}{\catname{Cat}}
\newcommand{\Graph}{\catname{Graph}}
\newcommand{\Glob}{\catname{Glob}}

\newcommand{\GlobPRO}{\catname{GlobPRO}}
\newcommand{\Duoidal}{\catname{Duoidal}_{\textit{lax}}}
\newcommand{\CartDuoidal}{\catname{CartDuoidal}}

\newcommand{\Coll}{\catname{Col}} 
\newcommand{\BiGrd}{\catname{BiGrd}}
\newcommand{\Talg}{\catname{T-Alg}}
\newcommand{\Cont}{\catname{Cont}}

\newcommand{\Mon}{\catname{Mon}}

\newcommand{\one}{\textbf{1}}
\newcommand{\Tone}{\mathcal{T}(\one)}
\newcommand{\Tsquared}{\mathcal{T}^2(\one)}
\newcommand{\id}{\mathbbm{1}}

\DeclareSymbolFont{AMSa}{U}{msa}{m}{n}
\DeclareMathSymbol{\mysquare}{\mathord}{AMSa}{"03}

\newcommand{\pullbackmark}[2]{\save ;p+<.8pc,0pc>:(0,-1)::%
	(#1) *{\phantom{Z}} %
	;p+(#2)-(0,0) **@{-}%
	;p-(#1)+(0,0) *{\phantom{Z}} **@{-} \restore}

\newcommand{\pullbackSub}[2]{{}_{#1}\kern-\scriptspace{\times}_{#2}}



\newtheorem{theorem}{Theorem}

\newtheoremrm{rem}{Remark}


\mathrmdef{Hom}
\mathbfdef{Set}


\begin{document}

\maketitle
\begin{abstract}
Batanin and Leinster’s work on globular operads has provided one of many potential definitions of a weak $\omega$-category. Through the language of globular operads they construct a monad whose algebras encode weak $\omega$-categories. The purpose of this work is to show how to construct a similar monad which will allow us to formulate weak $\omega$-categorifications of any equational algebraic theory. We first review the classical theory of operads and PROs. We then present how Leinster’s globular operads can be extended to a theory of globular PROs via categorical enrichment over the category of collections.  It is then shown how a process called globularization allows us to construct from a classical PRO $P$ a globular PRO whose algebras are those algebras for $P$ which are internal to the category of strict $\omega$-categories and strict $\omega$-functors. Leinster’s notion of a contraction structure on a globular operad is then extended to this setting of globular PROs in order to build a monad whose algebras are weakenings of the globularization of the classical PRO $P$.  Among these weakenings is the initial weakening whose algebras are by construction the fully weakened $\omega$-categorifications of the algebraic theory encoded by $P$.
\end{abstract}


\section{Introduction}\label{sec-Introduction}
The purpose of this work is to provide a framework and procedure for the categorification of general equational algebraic theories such as monoids, groups, quandles, rings, etc.  The notion of a categorification of an algebraic structure was first introduced by Crane\cite{crane_1995Clock}.  His original formulation consisted of constructing from an algebraic structure, such as a Hopf algebra, a new algebraic structure with analogous operations one categorical dimension higher, such as a Hopf category\cite{crane_Frenkel_1994TopQuantField}.  Such a construction, from a given model of an algebraic theory, requires choices and is in most nontrivial cases not functorial.  However, when this process is performed at the level of theories, it can be made functorial in a nontrivial and universal way.

There are many various ways one could categorify an abstract theory.  One approach is to provide a definition of your algebraic theory $T$ as an obect in the category $\Set$ equipped with various functions whose composition satisfies certain relations.  These relations then endow any such set with the structure of a $T$-algebra.  We can then internalize this construction in the category $\Cat$, so that the underlying set of the $T$-algebra is now replaced with an underlying category.  Moreover, the operations in the theory are now replaced with functors.  This is precisely what is meant by raising the categorical dimension by one step (in this case from 0 to 1).

The drawback to this procedure is that such a construction gives us a notion of categorification that is strict, in the sense that all relations satisfied by the composition of functors strictly hold by equality.  In many context this is not quite the desired construction.  We often want a notion of categorification that satisfies its defining relations weakly, in the sense that the equations imposed on the operations hold up to an equivalence defined by morphisms one categorical dimension higher.  But this leaves us with a new problem.  Once we construct morphisms one categorical dimension higher (in this case natural transformations between compositions of functors), we must determine what coherence conditions these new morphisms must satisfy when composed in various ways.  And even if appropriate coherence conditions can be determined, we are now left with a construction that gives a strict 2-categorification of the theory $T$.  We can similarly make our strict 2-categorification weak by replacing the coherence condition equations with 3-morphisms.  But once again we must determine what higher coherence conditions these 3-morphisms must satisfy.  This in turn leaves us with a strict 3-categorifiction, leading to an infinite sequence of analogous problems.

In principle we can continue to climb this ladder of categorical dimension hopping forever.  Unfortunately, each step becomes conceptually and computationally more difficult than the last.  It is hence more desirable to have a limiting construction that categorifies $T$ in each finite categorical dimension all at once.  This is one approach taken in higher category theory when attempting to defining notions of weak $\omega$-categories.  Batanin\cite{batanin_1998GlobCat} and Leinster\cite{leinster2004higher} have provided globular notions of weak $\omega$-categories.  After building up a language of globular operads, they then present globular $\omega$-categories as algebras for the initial globular operad constructed via a particular monad.

The process presented in the present work extends Batanin and Leinster's constructions to a formulation of how to build weak $\omega$-categorification of any equational algebraic theory $T$.  This is achieved by extending the theory of globular operads to a theory of globular PROs (a nonsymetric version of MacLane's notion of a PROP\cite{catalgebra:maclane1965}).  We shall first develop this theory of globular PROs.  Once this notion has been made precise, a process for turning the classical PRO $P_T$ for the theory $T$ into a globular PRO $\mathcal{P}_T$ will be presented.  This globular PRO $\mathcal{P}_T$ will have the property that its algebras in $\Glob$, the category of globular sets, are algebras for $P$ internal to the category of strict $\omega$-categories and strict $\omega$-functors.  Leinster's notion of contractibility will then be used to construct an initial weakening of $\mathcal{P}_T$.  This initial weakening of $\mathcal{P}_T$ will then, by construction, have as algebras, weak $\omega$-categorifications of the theory $T$.

	\section{PROs} 
		We begin by recalling the classical notion of a PRO.  Note that any algebraic structure whose axioms require two instances of the same variable on the same side of an equation cannot be represented by an operad, due to the fact that operations in an operad can only have coarity one.  For example, a group $G$ requires that for all $g \in G$
		$$g \cdot g^{-1} = e$$
		with $e$ being the identity element.  But capturing such a relation using abstract operations requires the use of a diagonal map $\Delta: G \times G \rightarrow G$, inversion map $(-)^{-1}: G \rightarrow G$, multiplication map $m: G \times G \rightarrow G$, identity identification map $e: \{*\} \rightarrow G$, and the unique set map $!_G: G \rightarrow \{*\}$ to the terminal one element set, subject to the constraint that the diagram
		$$\xymatrix{ & G \times G \ar[rr]^{\id_G \times (-)^{-1}} & & G \times G \ar[dr]^{m} & \\
			G \ar[ur]^{\Delta} \ar[rr]_{!_G} & & \{*\} \ar[rr]_{e} & & G }$$
		commutes.  But the diagonal map is not something which can exists among the operations of an operad.  In what follows we will instead be working primarily with PROs (whose name is short for product category), a generalization of nonsymetric operads which allows for operations with coarity greater than one, like the diagonal.
		
		\begin{definition}
			A \emph{PRO} $P$ is a strict monoidal category whose object set is isomorphic to $\mathbb{N}$ such that the monoidal product $+:P \times P \rightarrow P$ is identified with addition of natural numbers at the level of objects.
		\end{definition}
		
		We avoid axiomatizing a symmetric group action, preferring PROs to MacLane's notion of PROPs \cite{catalgebra:maclane1965} (whose name is short for product and permutation category).The theory of PROs can be easily extended to that of MacLane's original notion $\cite{catalgebra:maclane1965}$ of a PROP via the following construction.  The \textit{permutation PRO} $\mathcal{S}$ has $\mathbb{N}$ as its set of objects.  Its collection of morphisms are generated by the symmetric groups $\Sigma_n$ in the following sense.  For each $n \in \mathbb{N}$, the PRO component $\mathcal{S}(n,n) = \Sigma_n$, with the total collection of morphisms in the permutation PRO being the free monoid on the disjoint union of each of these components.  The operation + in the permutation PRO is concatenation of elements in the free monoid, corresponding to the obvious group homomorphism $\Sigma_n \times \Sigma_m \rightarrow \Sigma_{n + m}$, with the (co)arity of a word of permutations being simply the sum of the (co)arities of its letters.  Furthermore, since each morphism is either a permutation or a tensor product of such permutations (which is manifestly another permutation itself), composition in the permutation PRO is the usual composition of permutations.  The monoid unit morphism is simply the map which sends each $n$ to the identity permutation on $n$ elements.  Moreover, we require that $\mathcal{S}$ satisfy the following naturality condition.  Let $\tau_{n,m} \in \Sigma_{n + m}$ be the permuatation which swaps the first block of $n$ elements with the second block of $m$ elements.  Then, for all $n,m,n',m' \in \mathbb{N}$ we have that
		$$\tau_{m,m'} \circ (f + f') = (f' + f) \circ \tau_{n,n'}$$
		for all $f:n \rightarrow m$ and $f:n' \rightarrow m'$ in $\mathcal{S}$.  We can now give the following succinct definition for a PROP.
	
		\begin{definition}
			A \textit{PROP} is a PRO which contains the permutation PRO as a sub-PRO and satisfies the naturality condition above.
		\end{definition}
		
		This definition indicates a notable distinction between operads and PROs.  Should one chose to work with symmetic operads, the action of the symmetric group must be encoded as an extra piece of structure that acts upon the underlying non-symmetric operad.  However, being a PROP is simply a property that a PRO may or may not satisfy.  The action of the symetric group can be encoded entirely within the PRO structure of the PROP.
		
		More genearlly, note that PROs are precisely the monoid objects in $\Cat$, with respect to the cartesian product, which have the extra property that their monoid of objects is isomorphic to $(\mathbb{N}, +, 0)$.  Given a PRO $P$, we may think of the morphisms in $P(n,m)$ as operations of arity $n$ and coarity $m$.  Until an algebra for a PRO is specified, the objects $\mathbb{N}$ behave as placeholders for the arities and coarities of these operations, so that they may be composed with one another.  Once an algebra is specified for $P$, these `slots' will be filled with elements from the underlying set of the algebra, justifying the use of the name operations for the morphisms of our PRO.

		\begin{definition}
			An \emph{algebra for a PRO} $P$ in $\Set$ is a set $A$, together with, for all $n,m \in \mathbb{N}$, a family of functions
			$$\omega_{n,m}:P(n,m) \times A^n \rightarrow A^m$$
			which make the following diagrams commute
			$$\adjustbox{max width=\columnwidth}{\xymatrix{[P(m,r) \times P(n,m)] \times A^n\ar[rrr]^{\alpha^{\Set}_{P(m,r), P(n,m), A^n}} \ar[dd]_{\circ_{n,m,r}} & & & P(m,r) \times [P(n,m) \times A^n] \ar[rrr]^-{\id_{P(m,r)} \times \omega_{n,m}} & & & P(m,r) \times A^m \ar[dd]^{\omega_{m,r}} \\
					\\
					P(n,r) \times A^n \ar[rrrrrr]_{\omega_{n,r}} & & & & & & A^r}}$$
			
			$$\xymatrix{[P(n,m) \times P(r,s)] \times [A^n \times A^r] \ar[rrr]^-{+_{n,m,r,s} \times \id_{n + r}} \ar[dd]_{\boxtimes_{P(n,m),P(r,s),A^n,A^r}} & & & P(n + r, m + s) \times A^{n + r} \ar[dd]^{\omega_{n + r}} \\
				\\
				[P(n,m) \times A^n] \times [P(r,s) \times A^r] \ar[rrr]_-{\omega_{n,m} \times \omega_{r,s}} & & & A^m \times A^s \cong A^{m + s} }$$
			
			$$\xymatrix{ & & P(n,n) \times A^n \ar[ddrr]^{\omega_{n,n}} & & \\
				\\
				\{*\} \times A^n \ar[uurr]^{j_n \times \id_{A^n}} \ar[rrrr]_{\lambda^{\Set}_{A^n}} & & & & A^n }$$
			for all $n,m,r,s \in \mathbb{N}$, where the set map $\boxtimes_{X,Y,Z,W}: [X \times Y] \times [Z \times W] \rightarrow [X \times Z] \times [Y \times W]$ is the interchange morphism in $\Set$ which swaps the second factor with the third and reassociates accordingly, while $j_n:\{*\} \rightarrow P(n,n)$ identifies which element of $P(n,n)$ is the identity operation.
		\end{definition}
		
		Alternatively, we can construct what is called the endomorphism PRO on a set $A$.  This allows us to give $A$ the structure of an algebra via a representation homomorphism.
		\begin{definition}
			Given a set $A$, the $\emph{endomorphism PRO on A}$, denoted by $End(A)$, is the PRO which has as its set of objects all successive cartesian powers $A^n = \displaystyle\prod_{i = 1}^n A$ of the underlying set $A$ for all $n \in \mathbb{N}$, which can be naturally identified with $\mathbb{N}$.  Under this identification, the hom-sets $End(A)(m,n)$ in $End(A)$ are the hom-sets $\Set(A^n,A^m)$.  Composition in the PRO is simply the composition induced from $\Set$.  The monoidal product is induced by the product structure on $\Set$, which may be identified with addition of natural numbers.
		\end{definition}
		
		\begin{definition}
			An $\emph{algebra for a}$ PRO $P$ is a strict monoidal functor $F:P \rightarrow End(A)$ for some set $A$.
		\end{definition}
		
		We thus have two equivalent notions of an algebra whose equivalence can be seen via currying the action maps $\omega_{n,m}$ to obtain components of the desired monoidal functor.  In either description, we say that the PRO $P$ $\emph{acts on the object A}$.

		Before moving on, we should note that those readers already familiar with classical PROs may wonder why we have not chosen to work with Lawvere theories, which are the special case of a PRO whose monoidal product is the cartesian product.  Much of the work that follows could faithfully be reproduced using Lawvere theories instead of PROs.  The choice to work with the more general notion of a PRO is motivated by the process of PRO globularization which will be described later.  This process can be done with any PRO, not simplly the special case of Lawvere theories.

	\section{Collections}\label{sec-collections}
		We begin this section by recalling the notion of a globular set.  To do so requires the following category $\mathbb{G}$, known as the \emph{globe category}.  The category $\mathbb{G}$ has $\mathbb{N}$ as its set of objects.  Its morphisms are generated by $\sigma_n: n \rightarrow n+1$ and $\tau_n: n \rightarrow n+1$ for all $n \in \mathbb{N}$ subject to the relations $\sigma_{n+1} \circ \sigma_n = \tau_{n+1} \circ \sigma_n$ and $\sigma_{n+1} \circ \tau_n = \tau_{n+1} \circ \tau_n$.

		\begin{definition}
			A \emph{globular set} is a contravariant functor $G: \mathbb{G} \rightarrow \Set$. The category $\Glob$ of globular sets is the category of presheaves on $\mathbb{G}$.
		\end{definition}

		Another integral piece of structure we will need is the free strict $\omega$-category monad $\mathcal{T}: \Glob \rightarrow \Glob$.  This monad has the following property.
		
		\begin{definition}
            A monad $(T:\mathcal{C} \rightarrow \mathcal{C},\mu: T^2 \Rightarrow T, \eta: \id_{\mathcal{C}} \Rightarrow T)$ is a $\emph{cartesian monad}$ if all naturality squares for $\mu$ and $\eta$ are pullback squares and $T$ preserves all pullbacks.
        \end{definition}
		
		That $\mathcal{T}$ is cartesian, as well as a more detailed description of its construction and use, can be found in \cite{leinster2004higher}.  Briefly, this monad takes a globular set $\mathcal{X}$ and returns the underlying globular set of the free strict $\omega$-category generated by $\mathcal{X}$.  In other words, it takes a globular set $\mathcal{X}$ and constructs the globular set $\mathcal{T}(\mathcal{X})$ consisting of all possible pasting diagrams built out of the cells of $\mathcal{X}$.  We will often describe these pasting diagrams as `globular words' built out of the cells of $\mathcal{X}$.  The motivation for calling such a pasting diagram a word is to emphasize, for the sake of intuition, the analogy to the monoid of words constructed from a set of letters.  Note that a pasting diagram, all of whose cells are cells in $\mathcal{X}$, can be thought of as a generalization of the notion of a word thought of as a string of concatenated elements from some set $Y$.  The main difference between the two notions is that a globular word can be built out of concatenation of cells along any of their boundary cells, as opposed to the classical setting in which elements, or letters, can only be composed as horizontal strings.  So in this way we can think of words on a set (in either setting) as simply an element, or cell, in the underlying object of the free monoid, or $\omega$-category, on the respective notion of set.  Using this convention can sometimes be helpful when gaining intuition for the notion of globular pasting, which will be used frequently in what follows.  Throughout this work, when we do use the terms `pasting diagram' or `pasting scheme', these are in the sense of Power\cite{PowerNPasting1991}.  However, all cells of the pasting schemes and diagrams are globes.
		
		For our purposes we will be specifically interested in the globular set $\Tone$ generated by the terminal globular set $\one$ which has exactly one cell in each dimension.  It is precisely $\Tone$ which allows us to generalize our notion of the arity of an operation.  In the classical case, the arity of an element is the length of the word in $T(\{*\})$ over which the operation sits with respect to the operad's structure as a set fibered over $\mathbb{N}$.  In this more general context, the arity of a cell in a globular set $\mathcal{X}$ is a pasting scheme specified by a globular cell in $\Tone$.  More precisely, we can equip a globular set $\mathcal{X}$ with a morphism $x:\mathcal{X} \rightarrow \Tone$ which specifies globular arities via cells in $\Tone$ that are named by pasting schemes.  We may in turn think of the pasting schemes which name the cells in $\Tone$ as the possible arity shapes with which the globular cells in $\mathcal{X}$ may be equipped.

		\begin{definition}
			A \emph{collection} is a globular set $\mathcal{X}$ equipped with a globular set homomorphism $x:\mathcal{X} \rightarrow \Tone$ called the \emph{arity map}.
		\end{definition}

		By abuse of notation we will often represent a collection by simply writing its underlying globular set $\mathcal{X}$.

		\begin{definition}
			Let $x: \mathcal{X} \rightarrow \Tone$ and $y: \mathcal{Y} \rightarrow \Tone$ be a pair of collections.  A \emph{collection homomorphism} between them is a globular set map $f:\mathcal{X} \rightarrow \mathcal{Y}$ which makes the triangle
			$$\xymatrix{\mathcal{X} \ar[rr]^{f} \ar[dr]_{x} & & \mathcal{Y} \ar[dl]^{y}\\
			& \Tone & }$$
			commute.
		\end{definition}

		We shall use $\Coll$ to denote the category of collections. Note that $\Coll$ is simply the slice category $\Glob / \Tone$.  Furthermore, $\Coll$ has a monoidal structure with respect to a composition tensor product $\mysquare : \Coll \times \Coll \rightarrow \Coll$ defined as follows:
		
		\begin{definition}
			Let $x: \mathcal{X} \rightarrow \Tone$ and $y: \mathcal{Y} \rightarrow \Tone$ be a pair of collections.  Their composition tensor product $x \mysquare y : \mathcal{X} \mysquare \mathcal{Y} \rightarrow \Tone$ is defined by the diagram:
			$$\xymatrix{\mathcal{X} \mysquare \mathcal{Y} \pullbackmark{0,2}{2,0} \ar[rr] \ar[dd] & & \mathcal{T}(\mathcal{Y}) \ar[r]^-{\mathcal{T}(y)} \ar[dd]^{\mathcal{T}(!_{\mathcal{Y}})} & \Tsquared \ar[r]^-{\mu_{\one}} & \Tone \\
				& &  \\
				\mathcal{X} \ar[rr]^{x} & & \Tone}$$
			where $!_{\mathcal{Y}}: \mathcal{Y} \rightarrow \one$ is the unique map from $\mathcal{Y}$ to the terminal globular set.  The underlying globular set $\mathcal{X} \mysquare \mathcal{Y}$ is the pullback of $x$ and $\mathcal{T}(!_{\mathcal{Y}})$ with the arity globular set map $x \mysquare y$ defined to be the composition along the top row.
		\end{definition}
		
		This definition makes $\mathcal{X} \mysquare \mathcal{Y}$ the unique collection whose cells are pairs $(a,\psi)$ consisting of a $k$-cell $a \in \mathcal{X}$ and a `globular word' $\psi$ of $k$-cells from $\mathcal{Y}$ indexed by the arity of $a$.  In $\mathcal{X} \mysquare \mathcal{Y}$, the `globular letters' in the globular word $\psi \in \mathcal{T}(\mathcal{Y})$ may be compatibly `glued together' via the shape of $x(a) \in \Tone$ in the sense that each globular letter of $\psi$ is a $k$-cell whose arity shape under $y$ can replace a particular $k$-cell in the pasting scheme which names the cell $x(a) \in \Tone$. We can thus think of the cells of $\mathcal{X} \mysquare \mathcal{Y}$ as composable pairs specified by a cell of $\mathcal{X}$ and a `word' of cells from $\mathcal{Y}$, each of whose `letters' may be plugged into a sub k-cell of the $k$-dimensional pasting scheme which names the arity cell $x(a)$.
		
		Furthermore, the arity for a composable pair in $\mathcal{X} \mysquare \mathcal{Y}$ may be thought of as the `sum' of the arities of each letter in $\psi$ `glued together' in the shape of the pasting scheme which names $x(a)$.  More precisely, note that the map $\mathcal{T}(y)$ takes a word of cells from $\mathcal{Y}$ and returns a word of arity cells (i.e. a cell in $\mathcal{T}^2(\one)$ which is named by a pasting diagram of pasting schemes).  The component at $\one$ of the unit transformation $\mu$ for $\mathcal{T}$ then takes this globular word of arities and returns the cell in $\Tone$ which is named by the pasting scheme we would get if we strictly pasted together this diagram of schemes.  We can think of the cells in $\mathcal{T}^2(\one)$ as being named by factorizations of pasting schemes.  From this perspective, $\mu_{\one}$ essentially reduces this factorization by specifying the cell in $\Tone$ which is named by the strict pasting composition specified by the factorization.

	\section{Special Collections}
		There are several particular collections that are worth noting.  The first is the terminal collection $\id: \Tone \rightarrow \Tone$.  This collection is the unit for the cartesian product in $\Coll$.  Recall that the cartesian product in a slice category is defined via the pullback of a sliced object along another.  Hence the cartesian product of a collection $a: A \rightarrow \Tone$ against $\id$ creates an isomorphic collection
		whose underlying globular set consists of the elements of $A$ paired together with their arity shape specified by $a$.
		
		Also of note is the collection $I: \one \hookrightarrow \Tone$ whose arity map is simply the inclusion of generators.  This collection is important because it is the unit for $\mysquare$ in $\Coll$.  Note that applying $\mysquare$ to $a: A \rightarrow \Tone$ with $I$ on the right gives a collection whose underlying globular set consists of pairs whose entries are an element of $A$ together with the globular pasting scheme from $\Tone$ which represents its arity.  Similarly, tensoring with $I$ on the left gives a collection whose underlying globular set consists of pairs whose entries are a generic $n$-cell from $\one$ together with an $n$-cell from $A$.  As each of these collections is isomorphic to $a: A \rightarrow \Tone$, the collection $I: \one \hookrightarrow \Tone$ must be the unit for $\mysquare$ in $\Coll$.  Moreover, when enriching over $\Coll$ with respect to $\mysquare$, this collection can be used to distinguish elements in a particular hom-object.  For example, if $a: A \rightarrow \Tone$ is a hom-object collection, then a cell $x$ of $A$ may be distinguished by a collection morphism $[x] : \one \rightarrow A$.
		
		Note that the previous two collections are units for $\times$ and $\mysquare$ respectively.  These units give $\Coll$ two different monoidal structures.  Note that the $\mysquare$ unit $I$ is a sub-object of the cartesian unit $\id$.  Note also that monoids with respect to the product $\mysquare$ are precisely the objects of key interest in Leinster's construction.
		
		\begin{definition}
			A \emph{globular operad} is a monoid in $\Coll$ with respect to the monoidal product $\mysquare$.
		\end{definition}

		Another collection of note, which we shall use later, is the initial collection $\{\}: \emptyset \rightarrow \Tone$  whose arity map is the vacuous mapping from the empty globular set.  We may think of this as the empty collection.  For any collection $a: A \rightarrow \Tone$, it follows that both the cartesian and $\mysquare$ product (on either side) with $\{\}$ is simply $\{\}$.  With respect to these two products, $\{\}$ essentially behaves like multiplication by 0.
		
		One final collection worth mentioning is given by the globular set map $[id]: \one \rightarrow \Tone$.  Note that among the many cells in $\Tone$ there are the underlying globular cells of identity morphisms created when $\mathcal{T}$ produces the underlying globular set of the free strict $\omega$-category on $\one$.  Among these identities are the following special identities.  There is the underlying 1-cell of the identity on the single vertex in $\one$.  This identity map then has an identity 2-cell that sits over it.  And over this identity 2-cell there is an identity 3-cell that sits above it.  Continuing this process, we see that there is an inclusion of the terminal globular set $\one$ into $\Tone$ whose cells are exactly the iterated identities on the single 0-cell.  This sub-object can be thought of as the globular $\omega$-analogue of the additive identity $0 \in \mathbb{N}$.  The map $[id]$ is then the globular set map which sends each of the single $n$-cells in $\one$ to the corresponding iterated identity cell of dimension $n$ from the construction just described.  The map $[id]$ is, in this way, an identification of this `tower' of iterated identities as the particular identities for each $n$-dimensional pasting composition.

	\section{The Internal Hom in \Coll}\label{sec-exp-coll}
        Just as algebras for classical operads and PROs can be given by representation maps into an endomorphism object, so too can algebras for globular operads and PROs.  To better understand this process, we will briefly sketch how such endomorphsim objects are constructed in $\Coll$.  The key is understanding the construction of the internal hom in $\Coll$.  Fortunately, there is a convenient process for concretely understanding how this internal hom is constructed.
 
		Let $\phi: \mathcal{A} \rightarrow \mathcal{B}$ be a globular set map.  This then induces a change of base functor $\phi^*:\Glob/\mathcal{B} \rightarrow \Glob/\mathcal{A}$ between slice categories.  It takes a globular set map and returns its pullback along $\phi$.  The functor $\phi^*$ has both a left and right adjoint.  Its left adjoint $\Sigma_{\phi}: \Glob/\mathcal{A} \rightarrow \Glob/\mathcal{B}$ is simply composition with $\phi$.  Its right adjoint $\Pi_{\phi}: \Glob/\mathcal{A} \rightarrow \Glob/\mathcal{B}$ is more complicated to describe in general.   More detail on the general construction of $\Pi_{\phi}$ can be found in \cite{maclane1994sheaves}.  Fortunately, in a category whose objects have elements, such as $\Glob$, this right adjoint has a relatively nice description.  We can intuitively think of the fibered globular sets in $\Pi_{\phi}(\xi:\mathcal{X} \rightarrow \mathcal{A})$ as the globular set fibered over $\mathcal{B}$ of generalized sections of the globular set map $\xi$.  Let $\psi : \mathcal{Y} \rightarrow \mathcal{A}$ be any morphism in $\Glob/\mathcal{A}$.  The map $\Pi_{\phi}(\psi): \Gamma \rightarrow \mathcal{B}$ is constructed by specifying the fiber over each point.  Take an element $b \in \mathcal{B}$ and consider its fiber $A_b$ along the map $\phi$.  Each element $c \in \mathcal{A}_b$ then has a fiber $\mathcal{Y}_c$ sitting above it along the map $\psi$.  We can then define the fiber $\Gamma_b$ along the map $\Pi_{\phi}(\psi)$ to be the product $\displaystyle\prod_{c \in \mathcal{A}_b} Y_c$.  Following this construction for each $b \in \mathcal{B}$ gives the complete map from $\Gamma := \displaystyle\coprod_{b \in \mathcal{B}} \displaystyle\prod_{c \in \mathcal{A}_b} \mathcal{Y}_c$ to $\mathcal{B}$.

		Note then that, given the functor $- \mysquare \mathcal{B}: \Coll \rightarrow \Coll$ for the collection $b:\mathcal{B} \rightarrow \Tone$, we can decompose $- \mysquare \mathcal{B}$ as the following composition:
		$$- \mysquare \mathcal{B} = \Sigma_{\mu_{\one}}\Sigma_{\mathcal{T}(b)}\mathcal{T}(!_{\mathcal{B}})^*$$
		This functor takes the collection $a:\mathcal{A} \rightarrow \Tone$ to the collection $a \mysquare b: \mathcal{A} \mysquare \mathcal{B} \rightarrow \Tone$, where the arity map $a \mysquare b$ is $\Sigma_{\mu_{\one}}\Sigma_{\mathcal{T}(b)}\mathcal{T}(!_{\mathcal{B}})^*(a)$. Notice that this is exactly composition in the augmented pullback diagram used to define the composition tensor product $\mysquare$ in $\Coll$.  But by decomposing the functor $- \mysquare \mathcal{B}$ in this way, we can compute the appropriate right adjoint $[\mathcal{B},-]:\Coll \rightarrow \Coll$ by taking the right adjoint of each factor in the composition and reversing the order in which they are composed.  This then leads to the following formula:
		$$[\mathcal{B},-] = \Pi_{\mathcal{T}(!_{\mathcal{B}})}\mathcal{T}(b)^*\mu_{\one}^*$$
		Let us briefly consider how the composite $\mathcal{T}(b)^*\mu_{\one}^*$ acts on a collection $a:\mathcal{A} \rightarrow \Tone$.  Note that the map $\mathcal{T}(b)^*\mu_{\one}^*(a)$ is given as the topmost edge in the appropriate double pullback diagram to get the globular set map
		$$\mathcal{T}(b)^*\mu_{\one}^*(a): (\mathcal{A} \pullbackSub{a}{\mu_{\one}^{*}} \mathcal{T}^2(\one)) \pullbackSub{\mu_{\one}^{*}(a)}{\mathcal{T}(b)} \mathcal{T}(\mathcal{B}) \rightarrow \mathcal{T}(\mathcal{B})$$
		which is simply second projection.  We can intuitively think of this map as associating to each cell $\beta \in \mathcal{T}(\mathcal{B})$, which is named by a pasting diagram labeled by cells in $\mathcal{B}$, a pair $(\alpha, t) \in \mathcal{A} \times \mathcal{T}(\Tone)$ consisting of cell $\alpha \in \mathcal{A}$ and cell of cells $t$ of shape $\sigma \in \Tone$ (i.e. a globular word of cells in $\Tone$ indexed by the diagram of shape $\sigma$) such that the shape of $\alpha$ is the same as the shape of the cell obtained by gluing together the cells in $t$ per the pasting formula given by $\sigma$.  Moreover, the unlabeled cells of $t$ each have the same shape as the corresponding cells which make up the labeled diagram $\beta$.  We then apply $\Pi_{\mathcal{T}(!_{\mathcal{B}})}$ to $\mathcal{T}(b)^*\mu_{\one}^*(a)$ to get the desired internal hom.  

		Via this construction, we can now compute the internal hom $\mathcal{H}_{\mathcal{B},\mathcal{A}}: [\mathcal{B},\mathcal{A}] \rightarrow \Tone$ in $\Coll$ between any collections $b:\mathcal{B} \rightarrow \Tone$ and $a:\mathcal{A} \rightarrow \Tone$.  We can intuitively think of cells in each fiber $[A,B]_{\sigma}$ of our internal hom in the following way:  Recall that the internal hom is constructed as the object of general sections of the globular set map $\mathcal{T}(b)^*\mu_{\one}^*(a)$ defined above.  Moreover, a cell $\beta \in \mathcal{T}(\mathcal{B})_{\sigma}$ can be thought of as a choice of cells $\{\beta_{\tau}\}_{\tau \in \sigma}$ from $\mathcal{B}$ glued together along halves of their boundaries as prescribed by the pasting formula given by the pasting diagram $\sigma$.  Or rather, we can think of them as a coloring of the diagram $\sigma$ by cells in $\mathcal{B}$.  This allows us to think of a cell $\gamma \in [\mathcal{B},\mathcal{A}]_{\sigma}$ as a choice of a cell $\alpha \in \mathcal{A}$ to correspond to each coloring of the diagram $\sigma$ by cells of $\mathcal{B}$ so that the shape of $\alpha$ is the same as the shape of the diagram obtained by gluing the cells $\{\beta_{\tau}\}_{\tau \in \sigma}$ together via the pasting formula given by $\sigma$.  In other words, a `map' in the internal hom is roughly a thing that takes a coloring of the diagram $\sigma \in \Tone$ by cells from the source and picks a cell of the target that has the same arity shape as the cells from the source after all the pasting compositions prescribed by the diagram $\sigma$ have been performed.
		
		We can thus conclude this section with the following theorem.
		
		\begin{theorem}
			The category $\Coll$ has a closed monoidal structure with respect to the monoidal product $\mysquare$.
		\end{theorem}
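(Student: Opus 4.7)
The plan is to assemble the closedness statement from the ingredients already constructed in the preceding section, together with the monoidal structure theorem proved earlier. The monoidal part is handled by the previous theorem in the globular setting, so the only remaining task is to produce, for each collection $b:\mathcal{B}\rightarrow\Tone$, a right adjoint to the tensoring functor $-\mysquare\mathcal{B}:\Coll\rightarrow\Coll$, and to verify that this assignment is natural in $\mathcal{B}$ so as to yield a bifunctor $[-,-]:\Coll^{op}\times\Coll\rightarrow\Coll$.

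First I would record the ambient structural input: $\Glob$ is the presheaf topos on $\mathbb{G}$, so it is locally cartesian closed, which means that for every morphism $\phi:\mathcal{A}\rightarrow\mathcal{B}$ of globular sets the pullback functor $\phi^*:\Glob/\mathcal{B}\rightarrow\Glob/\mathcal{A}$ admits both a left adjoint $\Sigma_\phi$ and a right adjoint $\Pi_\phi$. From the pullback diagram defining $\mysquare$ I read off the factorization
$$-\mysquare\mathcal{B}\;=\;\Sigma_{\mu_{\one}}\circ\Sigma_{\mathcal{T}(b)}\circ\mathcal{T}(!_{\mathcal{B}})^*,$$
where the pullback along $\mathcal{T}(!_{\mathcal{B}})$ forms the fibre product and the two $\Sigma$'s post-compose with $\mathcal{T}(b)$ and $\mu_{\one}$ to supply the arity map of the resulting collection. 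Each of the three factors has a right adjoint by the preceding paragraph, and composites of adjunctions are adjunctions, so composing the right adjoints in reverse order yields
$$[\mathcal{B},-]\;=\;\Pi_{\mathcal{T}(!_{\mathcal{B}})}\circ\mathcal{T}(b)^*\circ\mu_{\one}^*$$
as the required right adjoint to $-\mysquare\mathcal{B}$. This is precisely the functor whose explicit pointwise description has just been computed, and the intuitive ``sections of $\mathcal{T}(b)^*\mu_{\one}^*(a)$ over $\mathcal{T}(\mathcal{B})$'' picture verifies that the abstract $\Pi_{\mathcal{T}(!_{\mathcal{B}})}$ agrees with the hands-on formula for the fibres of $\mathcal{H}_{\mathcal{B},\mathcal{A}}$.

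Finally, I would check naturality of the adjunction in the variable $\mathcal{B}$ so that $[-,-]$ assembles into an honest enrichment. Naturality in the second slot is automatic from the adjunction. Naturality in $\mathcal{B}$ reduces to the functoriality of $\phi\mapsto\Sigma_\phi$ and $\phi\mapsto\phi^*$ together with the mate calculus relating pullback squares to Beck--Chevalley transformations. The main obstacle, such as it is, is bookkeeping rather than conceptual: one must track units and counits through the three-step factorization, and one must rely on $\Glob$ being cartesian closed in the stronger local sense to guarantee that the abstract $\Pi_{\mathcal{T}(!_{\mathcal{B}})}$ actually admits the fibrewise description indicated in the text. Once this identification is made, the closed monoidal structure on $\Coll$ with respect to $\mysquare$ is established.
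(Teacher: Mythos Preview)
Your proposal is correct and follows essentially the same approach as the paper: the preceding section already factors $-\mysquare\mathcal{B}$ as $\Sigma_{\mu_{\one}}\Sigma_{\mathcal{T}(b)}\mathcal{T}(!_{\mathcal{B}})^*$ and obtains the right adjoint $[\mathcal{B},-]=\Pi_{\mathcal{T}(!_{\mathcal{B}})}\mathcal{T}(b)^*\mu_{\one}^*$ by reversing the composite, so the theorem is stated as a summary of that construction rather than given a separate proof. Your additional remarks on naturality in $\mathcal{B}$ go slightly beyond what the paper spells out, but the core argument is identical.
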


	\section{The Endomorphism Globular Operad}\label{sec-glob-end}
		Consider the collection $x:\mathcal{X} \rightarrow \Tone$.  We shall now construct the endomorphism globular operad on $\mathcal{X}$, denoted $Gend(\mathcal{X})$.  We define $Gend(\mathcal{X}):= [\mathcal{X},\mathcal{X}]$ via the internal hom construction in $\Coll$.  The underlying collection for the endomorphism globular operad on $\mathcal{X}$ can be thought of as abstractly encoding all the possible operations that take a coloring of a pasting diagram of shape $\sigma \in \Tone$ by globular cells from $\mathcal{X}$ to a single globular cell from $\mathcal{X}$ whose shape is the same as the `word of cells' after each of the pasting compositions prescribed by $\sigma$ are evaluated to give a composed cell in $\mathcal{X}$.  But since they are constructed using the internal hom, rather than the set valued hom, these `maps' from $\mathcal{X}$ to $\mathcal{X}$ naturally fiber over $\Tone$ so that we can place a canonical operad structure on $Gend(\mathcal{X})$.  The operad identity is given by the map $\iota: \one \rightarrow [\mathcal{X},\mathcal{X}]$ which maps each single $k$-cell of $\one$ to the the respective $k$-cell of $[\mathcal{X},\mathcal{X}]$  which corresponds to the identity operation on $k$-cells of $\mathcal{X}$.  This map $\iota$ can be constructed canonically as the currying of the left unitor $\lambda_{\mathcal{X}}: \one \mysquare \mathcal{X} \rightarrow \mathcal{X}$ for the monoidal structure in $\Coll$.  The composition map $\nu:[\mathcal{X},\mathcal{X}] \mysquare [\mathcal{X},\mathcal{X}] \rightarrow [\mathcal{X},\mathcal{X}]$ is the canonical map which takes a pair $(a,w) \in [\mathcal{X},\mathcal{X}] \mysquare [\mathcal{X},\mathcal{X}]$ and composes each of the letters from the word $w \in \mathcal{T}([\mathcal{X},\mathcal{X}])$ with each of the respective inputs for the operation $a \in [\mathcal{X},\mathcal{X}]$.  It can be canonically constructed as follows.  Consider the counit $\epsilon^{\mathcal{A}}: [\mathcal{A},-] \mysquare \mathcal{A} \Rightarrow \id_{\Coll}$ of the hom-tensor adjunction in $\Coll$ between $- \mysquare \mathcal{A}$ and $[\mathcal{A},-]$, which has components
		$\epsilon^{\mathcal{B}}_{\mathcal{A}}:[\mathcal{A},\mathcal{B}]\mysquare \mathcal{A} \rightarrow \mathcal{B}$ for collection $\mathcal{A}$.  We then get a map
		$$\mathcal{K}: ([\mathcal{X},\mathcal{X}] \mysquare [\mathcal{X},\mathcal{X}]) \mysquare \mathcal{X} \rightarrow [\mathcal{X},\mathcal{X}] \mysquare ([\mathcal{X},\mathcal{X}] \mysquare \mathcal{X}) \rightarrow [\mathcal{X},\mathcal{X}] \mysquare \mathcal{X} \rightarrow \mathcal{X}$$
		which is the composite $\mathcal{K} := \epsilon^{\mathcal{X}}_{\mathcal{X}} \circ (\id_{\mathcal{X}} \mysquare \epsilon^{\mathcal{X}}_{\mathcal{X}}) \circ \alpha_{[\mathcal{X},\mathcal{X}], [\mathcal{X},\mathcal{X}], \mathcal{X}}$.  The operad multiplication for $[\mathcal{X},\mathcal{X}]$ is then the currying of the map $\mathcal{K}$.

		\begin{theorem}
			Given a collection $x: \mathcal{X} \rightarrow \Tone$, the collection Gend($\mathcal{X}):[\mathcal{X},\mathcal{X}] \rightarrow \Tone$ admits the structure of a globular operad.
		\end{theorem}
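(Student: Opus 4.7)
The plan is to mirror the proof of the classical tautological operad theorem from Section 4 line by line, exploiting the fact that $\Coll$ has just been shown to be a closed monoidal category with respect to $\mysquare$. Since $[\mathcal{X},-]$ is right adjoint to $- \mysquare \mathcal{X}$, every map into $[\mathcal{X},\mathcal{X}]$ may be specified by currying a map out of something $\mysquare \mathcal{X}$, and the monoid structure on $[\mathcal{X},\mathcal{X}]$ may be detected entirely at the level of its uncurried versions.

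First I would take the identity $\iota: \one \rightarrow [\mathcal{X},\mathcal{X}]$ and the multiplication $\nu: [\mathcal{X},\mathcal{X}] \mysquare [\mathcal{X},\mathcal{X}] \rightarrow [\mathcal{X},\mathcal{X}]$ to be precisely the curried versions of $\lambda_{\mathcal{X}}^{\Coll}: \one \mysquare \mathcal{X} \rightarrow \mathcal{X}$ and of the composite $\mathcal{K}$ already constructed in the paragraph just before the theorem statement. These are the globular analogues of the $e$ and $m$ used in the classical case. Next I would check the three monoid axioms for $(Gtaut(\mathcal{X}), \nu, \iota)$ by passing each diagram through the hom-tensor adjunction: commutativity of the original diagram in $\Coll$ is equivalent to commutativity of the diagram obtained by $\mysquare$-tensoring with $\mathcal{X}$ and post-composing with the appropriate counits $\epsilon^{\mathcal{X}}_{\mathcal{X}}$. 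Once this reduction is made, the associativity diagram becomes a statement about two nested evaluations of $\epsilon^{\mathcal{X}}_{\mathcal{X}}$, which commutes by functoriality of $\mysquare$ and by the way $\nu$ was defined; the two unit diagrams become statements that $\iota \mysquare \id$ and $\id \mysquare \iota$ intertwine the counits with $\lambda_{\mathcal{X}}^{\Coll}$ and $\rho_{\mathcal{X}}^{\Coll}$, which is forced by the definition of $\iota$. Throughout I would invoke MacLane's coherence theorem for $(\Coll, \mysquare, I)$ to freely drop parentheses.

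The main obstacle is really just the bookkeeping: one must be confident that the formal features of the closed monoidal structure on $\Coll$ used in the graded-set argument (existence of the counit, functoriality of $\mysquare$, coherence) are genuinely available here. Fortunately these have been established in the preceding sections, so the verification is essentially formal and the three diagrams produced in the classical proof transport over verbatim, with $\Grdset$, $\{*\}$, $T$ and $\eta_X^X$ replaced by $\Coll$, $\one$, $\mathcal{T}$ and $\epsilon^{\mathcal{X}}_{\mathcal{X}}$ respectively. This yields the desired globular operad structure on $Gtaut(\mathcal{X})$.
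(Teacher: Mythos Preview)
Your proposal is correct and follows essentially the same approach as the paper: define $\iota$ and $\nu$ by currying $\lambda_{\mathcal{X}}$ and $\mathcal{K}$, then verify the monoid axioms by uncurrying the relevant diagrams and reducing to the computation already carried out in the graded-set case, invoking only the closed monoidal structure of $(\Coll,\mysquare,I)$ and MacLane coherence. The paper's own proof is in fact terser than yours---it simply asserts that ``the details of which are the same as those from the graded set case above''---so your write-up is, if anything, a slightly more explicit rendering of the same argument.
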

		
		\begin{proof}
			We need only to show that for $Gend(\mathcal{X}):[\mathcal{X},\mathcal{X}] \rightarrow \Tone$ the collection morphisms $\iota: \one \rightarrow [\mathcal{X},\mathcal{X}]$ and $\nu: [\mathcal{X},\mathcal{X}] \mysquare [\mathcal{X},\mathcal{X}] \rightarrow [\mathcal{X},\mathcal{X}]$ satisfy the commutative diagrams required of a monoid object in $\Coll$.  This can be seen by first currying the maps in the relevant diagrams and checking to see that these new curried diagrams, whose commutativity is equivalent with that of the originals, do in fact commute (a routine check).
		\end{proof}

	\section{Algebras for a Globular Operad}
		The notion of an algebra for a globular operad is analogous to that of algebras for classical operads.  However, with the following definition, we can describe how a globular operad acts on a globular set using the $\mysquare$ structure in $\Coll$.  This merely requires thinking of globular sets as collections in the following way.
		
		\begin{definition}
			A collection $x:\mathcal{X} \rightarrow \Tone$ is said to be \emph{degenerate} if the arity map factors as $x = [id] \circ !_{\mathcal{X}}$, where $[id]: \one \rightarrow \Tone$ is the map which identifies the unique copy of $\one$ in $\Tone$ consisting exclusively of iterated identities on the single 0-cell, and $!_{\mathcal{X}}:\mathcal{X} \rightarrow \one$ is the unique map from $\mathcal{X}$ to the terminal globular set $\one$.
		\end{definition}
		
		\begin{definition}
			An \emph{algebra} $\mathcal{A}$ \emph{for a globular operad} $(\mathcal{O},\circ,e)$ is a globular set $\mathcal{A}$, thought of as a degenerate collection, together with a collection homomorphism $\omega: \mathcal{O} \mysquare \mathcal{A} \rightarrow \mathcal{A}$ which makes the diagrams
			$$\xymatrix{(\mathcal{O} \mysquare \mathcal{O}) \mysquare \mathcal{A} \ar[rr]^{\alpha^{\Coll}_{\mathcal{O},\mathcal{O},\mathcal{A}}} \ar[d]_{\circ^{\mathcal{O}} \mysquare \id_{\mathcal{A}}} & & \mathcal{O} \mysquare (\mathcal{O} \mysquare \mathcal{A}) \ar[rr]^{\id_{\mathcal{O}} \mysquare \omega} & & \mathcal{O} \mysquare \mathcal{A} \ar[d]^{\omega} \\
				\mathcal{O} \mysquare \mathcal{A} \ar[rrrr]_{\omega} & & & & \mathcal{A} }$$
			
			$$\xymatrix{ & \mathcal{O} \mysquare \mathcal{A} \ar[ddr]^{\omega} & \\
				\\
				\one \mysquare \mathcal{A} \ar[uur]^{e \mysquare \id_{\mathcal{A}}} \ar[rr]_-{\lambda_{\mathcal{A}}^{\Coll}} & & \mathcal{A}}$$
			commute. 
		\end{definition}
		
		Note here that the algebras for a globular operad in this sense would be more general than desired if we did not require that $\mathcal{A}$ be a degenerate collection.  Recall that in theory of classical operads, a module for an operad is graded over $\mathbb{N}$.  In that case, an algebra for that operad is the special case of a module concentrated in degree zero. An analogous relationship holds between modules and algebras for a globular operad, where degenerate collections serve as the special case of a collection concentrated in degree zero.  By thinking of our globular sets as degenerate collections, the $k$-cells in $\mathcal{A}$ have arities given by the $k$th iterated identity on the single 0-cell in $\Tone$.  This allows us to describe the action of globular operads entirely through the language of the composition tensor product in $\Coll$.  Because each globular cell in $\mathcal{A}$ sits above one of the iterated identity cells in $\Tone$ described above, it can be thought of as a collection whose globular cells have `empty arity', in the sense that the arity map essentially tracks only the dimension of the cell.  This perspective also has the benefit of allowing us to think of the category of globular sets as a subcategory of $\Coll$.
		
		We can alternatively use $Gend(\mathcal{X})$ as defined above to define algebras as a representation of our globular operad.
		
		\begin{definition}
			Let $o:\mathcal{O} \rightarrow \Tone$ be a globular operad.  An $\mathcal{O}\emph{-module}$ is a globular operad homomorphism $f:\mathcal{O} \rightarrow Gend(\mathcal{A})$ for some collection $a:\mathcal{A} \rightarrow \Tone$.  An $\emph{algebra}$ for $\mathcal{O}$ is an $\mathcal{O}$-module such that the collection is degenerate.
		\end{definition}
		
        These two definitions of an algebra are equivalent, as seen by currying the map $\omega$ via the adjunction between $-\mysquare \mathcal{A}$ and $[\mathcal{A},-]$, to get a collection map which has the structure of a globular operad homomorphism.

	\section{Cartesian-Duoidal Enriched Categories}
		What we shall eventually define to be a globular PRO turns out to be a special type of monoidal category enriched over a particular type of duoidal category. We shall first recall the structure of a duoidal category, as well as how to enrich over such categories, as presented by Batanin and Markl\cite{CentEnrMonCat:Batanin-Markl:2012}.  Let us begin this construction with the following definition.

		\begin{definition}
			A \emph{duoidal category} is a nonuple $(\mathcal{D}, \otimes, I, \odot, U, \delta, \phi, \theta, \boxplus)$ consisting of a category $\mathcal{D}$, a pair of 2-variable functors $\otimes: \mathcal{D} \times \mathcal{D} \rightarrow \mathcal{D}$ and $\odot: \mathcal{D} \times \mathcal{D} \rightarrow \mathcal{D}$, a pair of unit objects $I$ and $U$, three morphism $\delta: I \rightarrow I \odot I$, $\phi: U \otimes U \rightarrow U$, and $\theta: I \rightarrow U$ in $\mathcal{D}$, and a natural transformation
			$$\boxplus : \otimes (\odot(-,-), \odot (-,-)) \Rightarrow \odot( \otimes(-,-) \otimes(-,-))$$
			given by components
			$$\boxplus_{A,B,C,D}: [A \odot B] \otimes [C \odot D] \rightarrow [A \otimes C] \odot [B \otimes D]$$
			with $A,B,C,D \in Obj(\mathcal{C})$, specifying a lax middle-four interchange law between the product structures.  All of this data must satisfy the properties that $(\mathcal{D}, \otimes, I)$ and $(\mathcal{D}, \odot, U)$ are both monoidal category structures on $\mathcal{D}$, $U$ is a monoid object in $(\mathcal{D}, \otimes, I)$, $I$ is a comonoid object in $(\mathcal{D}, \odot, U)$, and for all $A,B,C,D,E,F \in Obj(\mathcal{D})$ the following diagrams commute.
			$$\adjustbox{max width=\columnwidth}{\xymatrix{((A \odot B) \otimes (C \odot D)) \otimes (E \odot F) \ar[rrrrrr]^{\alpha^{\otimes}_{A \odot B, C \odot D, E \odot F}} \ar[dd]_{\boxplus_{A,B,C,D} \otimes \id_{E \odot F}} & & & & & & (A \odot B) \otimes ((C \odot D) \otimes (E \odot F)) \ar[dd]^{\id_{A \odot B} \otimes \boxplus_{C,D,E,F}} \\
					& & & & & & \\
					((A \otimes C) \odot (B \otimes D)) \otimes (E \odot F) \ar[dd]_{\boxplus_{A \otimes C, B \otimes D, E, F}} & & & & & & (A \odot B) \otimes ((C \otimes E) \odot (D \otimes F)) \ar[dd]^{\boxplus_{A, B, C \otimes E, D \otimes F}} \\
					& & & & & & \\
					((A \otimes C) \otimes E) \odot ((B \otimes D) \otimes F) \ar[rrrrrr]_{\alpha^{\otimes}_{A, C, E} \odot \alpha^{\otimes}_{B, D, F}} & & & & & & (A \otimes (C \otimes E)) \odot(B \otimes (D \otimes F)) }}$$
			
			$$\adjustbox{max width=\columnwidth}{\xymatrix{((A \odot B) \odot C) \otimes ((D \odot E) \odot F) \ar[rrrrrr]^{\alpha^{\odot}_{A, B, C} \otimes \alpha^{\odot}_{D, E, F}} \ar[dd]_{\boxplus_{A \odot B, C, D \odot E, F}} & & & & & & (A \odot (B \odot C)) \otimes (D \odot (E \odot F)) \ar[dd]^{\boxplus_{A, B \odot C, D, E \odot F}} \\
					& & & & & & \\
					((A \odot B) \otimes (D \odot E)) \odot (C \otimes F) \ar[dd]_{\boxplus_{A, B, D, E} \odot \id_{C \otimes F}} & & & & & & (A \otimes D) \odot ((B \odot C) \otimes (E \odot F)) \ar[dd]^{\id_{A \otimes D} \odot \boxplus_{B, C, E, F}} \\
					& & & & & & \\
					((A \otimes D) \odot (B \otimes E)) \odot (C \otimes F) \ar[rrrrrr]_{\alpha^{\odot}_{A \otimes D, B \otimes E, C \otimes F}} & & & & & & (A \otimes D) \odot ((B \otimes E) \odot (C \otimes F))
			}}$$
			
			$$\adjustbox{max width=\columnwidth}{\xymatrix{A \odot B \ar[rr]^-{\overline{\lambda}^{\otimes}_{A} \odot \overline{\lambda}^{\otimes}_{B}} \ar[dd]_{\overline{\lambda}^{\otimes}_{A \odot B}} & & (I \otimes A) \odot (I \otimes B) & & A \odot B \ar[rr]^-{\overline{\rho}^{\otimes}_{A} \odot \overline{\rho}^{\otimes}_{B}} \ar[dd]_{\overline{\rho}^{\otimes}_{A \odot B}} & & (A \otimes I) \odot (B \otimes I) \\
					& & & & & &  \\
					I \otimes (A \odot B) \ar[rr]_-{\delta \otimes \id_{A \odot B}} & & (I \odot I) \otimes (A \odot B) \ar[uu]_{\boxplus_{I, I, A, B}} & & (A \odot B) \otimes I \ar[rr]_-{\id_{A \odot B} \otimes \delta} & & (A \odot B) \otimes (I \odot I) \ar[uu]_{\boxplus_{A, B, I, I}} }}$$
			
			$$\adjustbox{max width=\columnwidth}{\xymatrix{A \otimes B \ar[rr]^-{\overline{\lambda}^{\odot}_{A} \otimes \overline{\lambda}^{\odot}_{B}} \ar[dd]_{\overline{\lambda}^{\odot}_{A \otimes B}} & & (U \odot A) \otimes (U \odot B) \ar[dd]^{\boxplus_{U, A, U, B}} & & A \otimes B \ar[rr]^-{\overline{\rho}^{\odot}_{A} \otimes \overline{\rho}^{\odot}_{B}} \ar[dd]_{\overline{\rho}^{\odot}_{A \otimes B}} & & (A \odot U) \otimes (B \odot U) \ar[dd]^{\boxplus_{A, U, B, U}} \\
					& & & & & &  \\
					U \odot (A \otimes B) & & (U \otimes U) \odot (A \otimes B) \ar[ll]^-{\phi \odot \id_{A \otimes B}} & & (A \otimes B) \odot U & & (A \otimes B) \odot (U \otimes U) \ar[ll]^-{\id_{A \otimes B} \odot \phi} }}$$
		\end{definition}
		
		We think of a duoidal category $(\mathcal{D}, \otimes, I, \odot, U, \delta, \phi, \theta, \boxplus)$ as a monoidal category $(\mathcal{D}, \otimes, I)$ equipped with two lax-monoidal functors $\odot: \mathcal{D} \times \mathcal{D} \rightarrow \mathcal{D}$ and $U: \one_{\Mon\Cat_{lax}} \rightarrow \mathcal{D}$ over the monoidal product $\otimes$, where $\one_{\Mon\Cat_{lax}}$ is the trivial monoidal category with one object, and $(\mathcal{D}, \odot, U)$ is a pseudomonoid with respect to the cartesian product in $\Mon\Cat_{lax}$, the category of monoidal categories and lax-monoidal functors.  The laxivity of the functor $\otimes$ induces the interchange transformation $\boxplus$ and the morphism $\delta$.  Similarly, the laxivity of $U$ induces the morphisms $\phi$ and $\theta$.  There are then six coherence conditions all of this data satisfy, which follow from the associativity and unity coherence conditions that make $\odot$ a lax-monoidal functor over $\otimes$ and those which make $\otimes$ an oplax monoidal functor over $\odot$.  Moreover, all of this data makes $I$ a comonoid object with respect to $\odot$.  It similarly follows that $U$ (thought of as an object in $\mathcal{D}$) is a monoid object with respect to $\otimes$.

		It is straightforward to define what a lax-duoidal functor between duoidal categories must be.  We then get that duoidal categories, together with all of the lax-duoidal functors between them, form a category which we shall here denote $\Duoidal$.  As Batanin and Markl point out, it is possible to enrich over objects in this category via the following construction.
		
		\begin{definition}
			A \emph{category enriched over a duoidal category} $(\mathcal{D}, \otimes, I, \odot, U, \delta, \phi, \theta, \boxplus)$, or simply a $\mathcal{D}$-category, is an enriched category with respect to the monoidal structure $(\mathcal{D}, \otimes, I)$.  A $\mathcal{D}$\emph{-functor} is an enriched functor between two $\mathcal{D}$-categories which is enriched with respect to the same monoidal structure $(\mathcal{D}, \otimes, I)$.  A $\mathcal{D}$\emph{-transformation} is an enriched natural transformation between two $\mathcal{D}$-functors.
		\end{definition}

		Note that these enriched categories do not initially appear to use the second monoidal structure.  This second product will however become manifest when looking at the category of categories enriched over a fixed duoidal category $\mathcal{D}$.  All such $\mathcal{D}$-categories and $\mathcal{D}$-functors between them form a category, which we shall here denote $\mathcal{D}\Cat$.  Here the second monoidal product from the duoidal structure on $\mathcal{D}$ induces a monoidal structure on $\mathcal{D}\Cat$.  The tensor product
		$$\oplus: \mathcal{D}\Cat \times \mathcal{D}\Cat \rightarrow \mathcal{D}\Cat$$
		of $\mathcal{D}$-categories $\mathcal{E}$ and $\mathcal{F}$ is given as the cartesian product on objects and for $A, B \in Obj(\mathcal{E})$ and $X, Y \in Obj(\mathcal{F})$ we have
		$$E \oplus F((A, X),(B, Y)) := E(A, B) \odot F(X, Y)$$
		as the hom-objects in $\mathcal{E} \oplus \mathcal{F}$.  The unit $\one_{\oplus}$ with respect to this tensor product is the trivial $\mathcal{D}$-category consisting of a single object $*$ and a single hom-object $\one_{\oplus}(*,*) := U$, which is precisely the monoidal unit for the second monoidal structure in the underlying duoidal category $\mathcal{D}$ over which the enrichment structure is defined.  This allows us to define the following type of $\mathcal{D}$-category.
		
		\begin{definition}
			A \emph{monoidal} $\mathcal{D}$\emph{-category} $(\mathcal{M}, \diamond, \iota)$ is a pseudomonoid in the monoidal category $(\mathcal{D}\Cat, \oplus, \one_{\oplus})$, where $\mathcal{M} \in Obj(\mathcal{D}\Cat)$, the $\mathcal{D}$-functor $\diamond: \mathcal{M} \oplus \mathcal{M} \rightarrow \mathcal{M}$ is the monoidal product, and $\iota: \one_{\oplus} \rightarrow \mathcal{M}$ is the unit $\mathcal{D}$-functor such that $\diamond$ is associative and unital, with respect to $\iota$, up to $\mathcal{D}$-transformations.
		\end{definition}
		
		Having the structure of a pseudomonoid implies the existence of a morphism
		$$\boxdot_{X, Y, Z, W}: M(X,Y) \odot M(Z, W) \rightarrow M(X \diamond Z, Y \diamond W)$$
		for every $X, Y, Z, W \in Obj(\mathcal{M})$, which encodes how $\diamond$ acts on morphisms.  Moreover, these satisfy the usual pentagon and triangle coherence conditions to ensure that $(\mathcal{M}, \diamond, \iota)$ is a pseudomonoid.  This structure will play a role below when defining lax-monoidal functors between $\mathcal{D}$ categories.  But before we define these functors, it is important to note one final fact.  Notice that $\mathcal{D}$-categories come equipped with an underlying category.  The underlying category has the same objects as the $\mathcal{D}$-category.  Morphisms in the underlying category are given by
		$$U(M)(X,Y) := D(I, M(X,Y))$$
		for $X, Y \in \mathcal{M}$.
  
        As found in \cite{CentEnrMonCat:Batanin-Markl:2012},  we have the following notion of a lax-monoidal functor between monoidal $\mathcal{D}$-categories.  
		
		\begin{definition}
			A \emph{lax-monoidal} $\mathcal{D}$\emph{-functor} between monoidal $\mathcal{D}$-categories $(\mathcal{M}, \diamond, \iota)$ and $(\mathcal{M}', \diamond', \iota')$ is a triple $(F, \widehat{F}, e)$ consisting of an underlying $\mathcal{D}$-functor $F: \mathcal{M} \rightarrow \mathcal{M}'$ together with a $\mathcal{D}$-transformation
			$$\widehat{F}: \diamond'(F(-), F(-)) \Rightarrow F(\diamond(-,-))$$
			given by components
			$$\widehat{F}_{X, Y}: I \rightarrow M'(F(X) \diamond' F(Y), F(X \diamond Y))$$
			for $X, Y \in Obj(\mathcal{M})$, and a morphism
			$$e: \iota'(*) \rightarrow F(\iota(*))$$
			in $(\mathcal{M}', \diamond', \iota')$ such that the underlying functor $F$ is a lax-monoidal functor between the underlying monoidal categories, not thought of as $\mathcal{D}$-categories.  Moreover, this data must satisfy the following coherence condition for all $X, Y, Z, W \in Obj(\mathcal{M})$
			$$\adjustbox{max width=\columnwidth}{\xymatrix{ & \text{\small$M(X \diamond Z, Y \diamond W) \otimes I$} \ar[dr]^{F \otimes \widehat{F}_{X, Z}} & \\ 
					\text{\small$M(X \diamond Z, Y \diamond W)$} \ar[ur]^{\overline{\rho}^{\mathcal{D}}_{M(X \diamond Z, Y \diamond W)}\qquad} & & \text{\small$M'(F(X \diamond Z), F(Y \diamond W)) \otimes M'(F(X) \diamond' F(Z), F(X \diamond Z))$} \ar[dd]^{\circ^{\mathcal{M}'}_{F(X) \diamond' F(Z), F(X \diamond Z), F(Y \diamond W)}} \\
					& & \\ 
					\text{\small$M(X, Y) \odot M(Z,W)$} \ar[uu]^{\boxdot^{\mathcal{M}}_{X, Y, Z, W}} \ar[dd]_{F \odot F} & & \text{\small$M'(F(X) \diamond' F(Z), F(Y \diamond W))$} \\ 
					& & \\ 
					\text{\small$M'(F(X),F(Y)) \odot M'(F(Z),F(W))$} \ar[dr]_(.40){\overline{\lambda}^{\mathcal{D}}_{M'(F(X), F(Y)) \odot M'(F(Z), F(W))}\qquad} & & \text{\small$M'(F(Y) \diamond' F(W), F(Y \diamond W)) \otimes M'(F(X) \diamond' F(Z), F(Y) \diamond' F(W))$} \ar[uu]_{\circ^{\mathcal{M}'}_{F(X) \diamond' F(Z), F(Y) \diamond' F(W), F(Y \diamond W)}} \\ 
					& \text{\small$I \otimes [M'(F(X), F(Y)) \odot M'(F(Z), F(W))]$} \ar[ur]_(.55){\qquad\qquad\qquad\widehat{F}_{Y, W} \otimes \boxdot^{\mathcal{M}'}_{F(X), F(Y), F(Z), F(W)}} & }}$$
			ensuring that the two pseudomonoid structures are compatible.
		\end{definition}
		
		We will need the following special type of duoidal category.
		
		\begin{definition}
			A \emph{cartesian-duoidal category} is a duoidal category $(\mathcal{D}, \otimes, I, \times, \id, \delta, \phi,$ $\theta, \boxplus)$ such that the second monoidal structure $(\mathcal{D}, \times, \id)$ is a cartesian monoidal category.
		\end{definition}
		
		Note that a routine check shows that cartesian-duoidal categories are precisely monoidal categories with finite products.  We shall however use the term cartesian-duoidal here for brevity.  Moreover, Cartesian-duoidal categories form a subcategory of $\Duoidal$, which we shall here denote by $\CartDuoidal$.  We shall call a category enriched over a cartesian-duoidal category $\mathcal{C}$ a $\mathcal{C}$\emph{-category}.  For a fixed $\mathcal{C}$ we shall denote the category of all such enriched categories $\mathcal{C}\Cat$.  We can finally state succinctly the key definition of this section.

		\begin{definition}
			A \emph{cartesian-duoidal enriched monoidal category} is a pseudomonoid in $(\mathcal{C}\Cat, \oplus, \one_{\oplus})$.
		\end{definition}
		
		Note that a strict cartesian-duoidal enriched monoidal category would simply be a monoid in $(\mathcal{C}\Cat, \oplus, \one_{\oplus})$.  Our present interest in these monoidal categories is that they allow us to generalize the classical definition of a PROs, which are merely a special type of monioid in $\Cat$.  We conclude this section with the following definition.
		
		\begin{definition}
			An \emph{enriched PRO} is a strict duoidal enriched monoidal category enriched over a duoidal category $\mathcal{D}$ such that the object set can be identified with $\mathbb{N}$ and the monoidal product on objects is identified with addition of natural numbers.
		\end{definition}

        Note that the duoidal structure in $\Coll$ that is used to create globular PROs does force them to be like the special case of the Lawvere variant of the more general notion of a PRO since the second monoidal structure is cartesian product. But if one were to enrich over another duoidal category and consider the subcategory of monoids in this category of enriched categories that have the extra properties that their objects are identifiable with $\mathbb{N}$ and the monoidal product on objects is addition on $\mathbb{N}$, we would get something more general than an enriched Lawvere theory.  We currently leave the properties of such structures open for future investigation.  The objects of study in the present work are simply the globular variant of the more general enriched PROs, not all of which prima facie must come from enrichment over cartesian-duoidal categories.
		
		\section{Defining Globular PROs}
		Just as classical PROs may be presented as a specific type of monoidal category, in what follows we will see that a globular PRO is simply a specific type of cartesian-duoidal enriched category.  Before formally defining globular PROs we first need to ensure that $\Coll$ is cartesian-duoidal.  But since $\Coll$ is a slice topos it has a natural cartesian product.  It then follows immediately from the proposition above that since $\Coll$ has finite products it is moreover cartesian-duoidal.  This then ensures us that the category $\Coll$ has the appropriate structure for us to define globular PROs via the following construction.
		
		\begin{definition}
			A \emph{globular PRO} is a strict cartesian-duoidal enriched monoidal category $(\mathcal{P}, +, O)$ enriched over the cartesian-duoidal category $\Coll$ such that the object set of $\mathcal{P}$ is isomorphic to $\mathbb{N}$, the bifunctor $+: \mathcal{P} \times \mathcal{P} \rightarrow \mathcal{P}$ acts as addition of natural numbers on objects, and the unit $\Coll$-functor $O: \one_{\oplus} \rightarrow \mathcal{P}$ maps * to the additive identity $0 \in \mathbb{N}$.
		\end{definition}
		
		Note that a globular PRO is precisely an enriched cartesian PRO enriched over $\Coll$.  More explicitly, a globular PRO $\mathcal{P}$ has the following structure.  $\mathcal{P}$ has as its object set $Obj(\mathcal{P}) \cong \mathbb{N}$.  For each pair $n,m \in \mathbb{N}$ we have a \emph{hom-object} $h_{n,m}:\mathcal{P}(n,m) \rightarrow \Tone$ from $\Coll$, which we will often simply write as $\mathcal{P}(n,m)$.  For each triple $n,m,l \in \mathbb{N}$ we have a collection homomorphism $\circ_{n,m,l}: \mathcal{P}(m,l) \mysquare \mathcal{P}(n,m) \rightarrow \mathcal{P}(n,l)$ called \emph{composition at} $(n,m,l)$.  We also have for each $n \in \mathbb{N}$ a collection homomorphism $j_n: \one \rightarrow \mathcal{P}(n,n)$ called the \emph{identity identification at} $n$.
		
		All of this data must satisfy, for all $n,m,l,k \in \mathbb{N}$, the following two commutative diagrams ensuring that composition in $\mathcal{P}$ is associative and unital.
		$$\adjustbox{max width=\columnwidth}{\xymatrix{[\mathcal{P}(l,k) \mysquare \mathcal{P}(m,l)] \mysquare \mathcal{P}(n,m) \ar[rrrr]^{\alpha^{\Coll_{\mysquare}}_{\mathcal{P}(l,k),  \mathcal{P}(m,l), \mathcal{P}(n,m)}} \ar[dd]_{\circ_{m,l,k} \mysquare \id_{\mathcal{P}(n,m)}} & & & & \mathcal{P}(l,k) \mysquare [\mathcal{P}(m,l) \mysquare \mathcal{P}(n,m)] \ar[dd]^{\id_{\mathcal{P}(l,k)} \mysquare \circ_{n,m,l}} \\
				& & & & \\
				\mathcal{P}(m,k) \mysquare \mathcal{P}(n,m) \ar[ddrr]_{\circ_{n,m,k}\quad} & & & & \mathcal{P}(l,k) \mysquare \mathcal{P}(n,l) \ar[ddll]^{\quad\circ_{n,l,k}} \\
				& & & & \\
				& & \mathcal{P}(n,k) & & }}$$
		
		$$\xymatrix{\mathcal{P}(m,m) \mysquare \mathcal{P}(n,m) \ar[rr]^-{\circ_{n,m,m}} & & \mathcal{P}(n,m) & & \mathcal{P}(n,m) \mysquare \mathcal{P}(n,n) \ar[ll]_-{\circ_{n,n,m}} \\
			& & & & \\
			\one \mysquare \mathcal{P}(n,m) \ar[uu]^{j_m \mysquare \id_{\mathcal{P}(n,m)}} \ar[uurr]_{\lambda^{\Coll_{\mysquare}}_{\mathcal{P}(n,m)}} & & & & \mathcal{P}(n,m) \mysquare \one \ar[uu]_{\id_{\mathcal{P}(n,m)} \mysquare j_n} \ar[uull]^{\rho^{\Coll_{\mysquare}}_{\mathcal{P}(n,m)}} }$$
		Here $\Coll_{\mysquare}$ is used to denote that these structure maps are those for the $\mysquare$ monoidal product as opposed to that of the cartesian product in $\Coll$.
		
		The globular PRO $\mathcal{P}$ must also come equipped with a monoidal structure encoded in the 2-variable functor $+:\mathcal{P} \times \mathcal{P} \rightarrow \mathcal{P}$.  Since $\mathcal{P}$ is an enriched category, the functor $+$ must moreover be an enriched functor of 2-variables.  More precisely, this means that $+$ is given on objects by the addition map $+:\mathbb{N} \times \mathbb{N} \rightarrow \mathbb{N}$ together with, for each $n,m,l,k \in \mathbb{N}$, collection homomorphisms $+_{n,m,l,k}: \mathcal{P}(n,m) \times \mathcal{P}(l,k) \rightarrow \mathcal{P}(n+l,m+k)$, all of which must, for all $n,m,l,k,r,s \in \mathbb{N}$, make the following diagrams commute.
		$$\adjustbox{max width=\columnwidth}{\xymatrix{[\mathcal{P}(m,r) \times \mathcal{P}(k,s)] \mysquare [\mathcal{P}(n,m) \times \mathcal{P}(l,k)] \ar[rrrr]^-{\boxtimes_{\mathcal{P}(m,r), \mathcal{P}(k,s), \mathcal{P}(n,m), \mathcal{P}(l,k)}} \ar[dd]_{+_{m,r,k,s} \mysquare +_{n,m,l,k}} & & & & [\mathcal{P}(m,r) \mysquare \mathcal{P}(n,m)] \times [\mathcal{P}(k,s) \mysquare \mathcal{P}(l,k)] \ar[dd]^{\circ_{n,m,r} \times \circ_{l,k,s}} \\
				& & & & \\
				\mathcal{P}(m+k, r+s) \mysquare \mathcal{P}(n+l, m+k) \ar[ddrr]^{\quad\circ_{n+l,m+k,r+s}} & & & & \mathcal{P}(n,r) \times \mathcal{P}(l,s) \ar[ddll]_{+_n,r,l,s\quad} \\
				& & & & \\
				& & \mathcal{P}(n+l, r+s) & & }}$$
		
		$$\xymatrix{ & & \one \times \one \cong \one \ar[ddll]_{j_n \times j_m} \ar[ddrr]^{j_{n+m}} & & \\
			& & & & \\
			\mathcal{P}(n,n) \times \mathcal{P}(m,m) \ar[rrrr]^{+_{n,n,m,m}} & & & & \mathcal{P}(n+m,n+m) }$$
		
		$$\adjustbox{max width=\columnwidth}{\xymatrix{[\mathcal{P}(n,m) \times \mathcal{P}(l,k)] \times \mathcal{P}(r,s) \ar[rrrr]^{\alpha^{\Coll_{\times}}_{\mathcal{P}(n,m), \mathcal{P}(l,k), \mathcal{P}(r,s)}} \ar[dd]_{+_{n,m,l,k} \times \id_{\mathcal{P}(r,s)}} & & & & \mathcal{P}(n,m) \times [\mathcal{P}(l,k) \times \mathcal{P}(r,s)] \ar[dd]^{\id_{\mathcal{P}(n,m)} \times +_{l,k,r,s}} \\
				\\
				\mathcal{P}(n+l,m+k) \times \mathcal{P}(r,s) \ar[ddrr]^{\quad+_{n+l,m+k,r,s}} & & & & \mathcal{P}(n,m) \times \mathcal{P}(l+r,k+s) \ar[ddll]_{+_{n,m,l+r,k+s}\qquad}
				\\
				\\
				& & \mathcal{P}(n+l+r,m+k+s) & & }}$$
		
		$$\adjustbox{max width=\columnwidth}{\xymatrix{\Tone \times \mathcal{P}(n,m) \ar[rrr]^-{O \times \id_{\mathcal{P}(n,m)}} \ar[ddrrr]_{\lambda^{\Coll_{\times}}_{\mathcal{P}(n,m)}} & & & \mathcal{P}(0,0) \times \mathcal{P}(n,m) \cong \mathcal{P}(n,m) \times \mathcal{P}(0,0) \ar[dd]^{+_{n,m,0,0}}_{+_{0,0,n,m}} & & & \mathcal{P}(n,m) \times \Tone \ar[lll]_-{\id_{\mathcal{P}(n,m)} \times O} \ar[ddlll]^{\quad\rho^{\Coll_{\times}}_{\mathcal{P}(n,m)}} \\
				\\
				& & & \mathcal{P}(n,m) & & & }}$$
		The first two diagrams ensure that $+$ is a $\Coll$-functor.  The second two ensure that $\mathcal{P}$ is a monoid object $\Coll\Cat$ with respect to the product $\oplus$, which in this context is simply the cartesian product on homsets.  We again adopt the notation $\Coll_{\times}$ to distinguish the structure maps from the cartesian structure on $\Coll$ from the $\mysquare$ monoidal product.
		
		\begin{definition}
			A \emph{morphism of globular PROs} between globular PROs $\mathcal{P}$ and $\mathcal{P}'$ is a strict monoidal $\Coll$-functor $(F,\widehat{F}, e): \mathcal{P} \rightarrow \mathcal{P}'$.  More precisely, such a morphism consists of an underlying $\Coll$-functor
			$$F: \mathcal{P} \rightarrow \mathcal{P}'$$
			that is the identity on objects, a $\Coll$-enriched natural transformation
			$$\widehat{F}:+(F(-),F(-)) \Rightarrow F(+(-,-))$$
			with each component
			$$\widehat{F}_{n,m}: I \rightarrow \mathcal{P}'(F(n) + F(m), F(n + m))$$
			for $n,m \in \mathbb{N}$ having $\widehat{F}_{n,m} = j^{\mathcal{P}'}_{n + m}$, and a morphism
			$$e: I \rightarrow \mathcal{P}'(0,F(0))$$
			such that $e=j^{\mathcal{P}'}_{0}$, all of which makes $F$ a strict monoidal functor between the underlying categories $\mathcal{P}$ and $\mathcal{P}'$ not thought of as $\Coll$-categories.  Moreover, the diagram
			$$\adjustbox{max width=\columnwidth}{\xymatrix{ & \text{\small$P(n + l, m + k) \mysquare I$} \ar[dr]^{F \mysquare \widehat{F}_{n, l}} & \\ 
					\text{\small$P(n + l, m + k)$} \ar[ur]^{\overline{\rho}^{\Coll}_{P(n + l, m + k)}\qquad} & & \text{\small$P'(F(n + l), F(m + k)) \mysquare P'(F(n) +' F(l), F(n + l))$} \ar[dd]^{\circ^{\mathcal{P}'}_{F(n) +' F(l), F(n + l), F(m + k)}} \\
					& & \\ 
					\text{\small$P(n, m) \times P(l,k)$} \ar[uu]^{\boxdot^{\mathcal{P}}_{n, m, l, k}} \ar[dd]_{F \times F} & & \text{\small$P'(F(n) +' F(l), F(m + k))$} \\ 
					& & \\ 
					\text{\small$P'(F(n),F(m)) \times P'(F(l),F(k))$} \ar[dr]_(.4){\overline{\lambda}^{\Coll}_{P'(F(n), F(m)) \times P'(F(l), F(k))}\qquad} & & \text{\small$P'(F(m) +' F(k), F(m + k)) \mysquare P'(F(n) +' F(l), F(m) +' F(k))$} \ar[uu]_{\circ^{\mathcal{P}'}_{F(n) +' F(l), F(m) +' F(k), F(m + k)}} \\ 
					& \text{\small$I \mysquare [P'(F(n), F(m)) \times P'(F(l), F(k))]$} \ar[ur]_{\quad\qquad\qquad\widehat{F}_{m, k} \mysquare \boxdot^{\mathcal{P}'}_{F(n), F(m), F(l), F(k)}} & }}$$
			must commute for all $n,m,l,k \in \mathbb{N}$.
		\end{definition}
		
		Together with the morphisms between them, Globular PROs form a category which we shall here denote by $\GlobPRO$.
		
		We define below a special class of globular PROs that will become of great importance later.
		
	    \begin{definition}
			A \emph{weakenable globular PRO} is a globular PRO with the property that, for each $n \in \mathbb{N}$, there exists a collection morphism $\Gamma_n: \Tone \rightarrow \mathcal{P}(n,n)$.
		\end{definition}
		
		Note that weakenable globular PRO's have extra structure that allows for us to define an `extrinsic' composition on the cells in each hom-collection $\mathcal{P}(n,m)$.  In fact, it essentially imposes upon each hom-collection the structure of a strict $\omega$-category.  This is done via the unlabled map defined via the following commutative diagram.
		
		$$\xymatrix{\Tone \mysquare \mathcal{P}(n,m) \ar[rr] \ar[ddr]_{\Gamma_m \mysquare \one} & & \mathcal{P}(n,m) \\
		                                                                            \\
		             & \mathcal{P}(m,m) \mysquare \mathcal{P}(n,m) \ar[uur]_{\circ_{n,m,m}} & 
		 }$$

	\section{The Endomorphism Globular PRO}
		Just as with ordinary PROs, before formalizing the notion of an algebra for a globular PRO, we will first construct the $\emph{endomorphism globular PRO}$ given a degenerate collection $a:A \rightarrow \Tone$, which we shall denote by $GEnd(A)$.  Note that in the construction that follows it is not strictly necessary that the collection $a:A \rightarrow \Tone$ be degenerate in order to define a endomorphism globular PRO.  We however make this assumption for the purpose of defining algebras for globular PROs.  If $a:A \rightarrow \Tone$ is not degenerate, the final result of this construction gives instead the structure of a module.
		
		We first construct the PRO $GEnd(A)$ by specifying its objects.  $GEnd(A)$ has as its set of objects all successive cartesian powers $A^n$ in $\Coll$ for $n \in \mathbb{N}$.  These can, as in the non-globular case, be naturally identified with $\mathbb{N}$.  Under this identification the hom-objects $GEnd(A)(n,m)$ in $GEnd(A)$ are exactly the internal hom $[A^n,A^m]$ of the closed structure corresponding to the product $\mysquare$ in $\Coll$.  To understand composition in $GEnd(A)$ we first need to consider again the hom-tensor adjunction $- \mysquare B \dashv [B,-]:\Coll \rightarrow \Coll$.  Let
		$$\epsilon^{B}: [B,-] \mysquare B \Rightarrow \id_{\Coll}$$
		be the counit of this adjunction, which has components
		$$\epsilon^{B}_{X}: [B,X] \mysquare B \rightarrow X$$
		for each collection $x:X \rightarrow \Tone$.  We will call each of these components $\emph{evaluation}$.  Now consider the composition
		$$\adjustbox{max width=\columnwidth}{\xymatrix{\theta_{X,Y,Z}:([Y,Z] \mysquare [X,Y]) \mysquare X \ar[rrr]^-{\alpha^{\Coll_{\mysquare}}_{[Y,Z],[X,Z],X}} & & & [Y,Z] \mysquare ([X,Y] \mysquare X) \ar[rr]^-{\id_{[Y,Z]} \mysquare \epsilon^{X}_{Y}} & & [Y,Z] \mysquare Y \ar[r]^-{\epsilon^{Y}_{Z}} & Z
		}}$$
		in $\Coll$.  Since $\theta_{X,Y,Z} \in \textrm{Hom}_{\Coll}(([Y,Z] \mysquare [X,Y]) \mysquare X, Z)$ we can curry it to get the morphism
		$$\circ_{X,Y,Z}: [Y,Z] \mysquare [X,Y] \rightarrow [X,Z]$$
		which we define to be the $(X,Y,Z)$ component of the composition in $GEnd(A)$.  In order to get identities for $GEnd(A)$ we must then consider the left unitor 
		$$\lambda^{\Coll_{\mysquare}}_X:\one \mysquare X \rightarrow X$$
		with respect to the monoidal product $\mysquare$ in $\Coll$.  The identity identification at $X$ is then defined to be the currying of $\lambda^{\Coll_{\mysquare}}_X$,
		$$j_X: \one \rightarrow [X,X]$$
		similar to that of composition.  To define the monoidal product $+$, we first consider the morphism $\kappa_{A^n,A^m,A^l,A^k}$ which we define via the diagram below.
		$$\adjustbox{max width=\columnwidth}{\xymatrix{ ([A^n,A^m] \times [A^l,A^k]) \mysquare A^{n+l} \cong ([A^n,A^m] \times [A^l,A^k]) \mysquare (A^n \times A^l) \ar[ddr]_{\boxtimes_{A^n,A^m,A^l,A^k}\qquad} \ar@{-->}[rrrr]^{\kappa_{A^n,A^m,A^l,A^k}} & & & & A^m \times A^k \cong A^{m+k} \\
				\\
				& ([A^n,A^m] \mysquare A^n) \times ([A^l,A^k] \mysquare A^l) \ar[uurrr]_{\epsilon^{A^n}_{A^m} \times \epsilon^{A^l}_{A^k}}
		}}$$
		This allows us to then define
		$$+_{A^n,A^m,A^l,A^k}:[A^n,A^m] \times [A^l,A^k] \rightarrow [A^{n+l},A^{m+k}]$$
		as the currying of $\kappa_{A^n,A^m,A^l,A^k}$.  This is then the monoidal product in $GEnd(A)$.  It remains to show that all of this data satisfies the following commutative diagrams:
		$$\adjustbox{max width=\columnwidth}{\xymatrix{([A^l,A^k] \mysquare [A^m,A^l]) \mysquare [A^n,A^m] \ar[rrrr]^{\alpha^{\Coll_{\mysquare}}_{[A^l,A^k], [A^m,A^l], [A^n,A^m]}} \ar[dd]_{\circ_{A^m,A^l,A^k} \mysquare \id_{[A^n,A^m]}} & & & & [A^l,A^k] \mysquare ([A^m,A^l] \mysquare [A^n,A^m]) \ar[dd]^{\id_{[A^l,A^k]} \mysquare \circ_{A^n,A^m,A^l}} \\
				& & & & \\
				[A^m,A^k] \mysquare [A^n,A^m] \ar[ddrr]_{\circ_{A^n,A^m,A^k}\quad} & & & & [A^l,A^k] \mysquare [A^n,A^l] \ar[ddll]^{\qquad\circ_{A^n,A^l,A^k}} \\
				& & & & \\
				& & [A^n,A^k] & & }}$$
		
		$$\xymatrix{[A^m,A^m] \mysquare [A^n,A^m] \ar[rr]^-{\circ_{A^n,A^m,A^m}} & & [A^n,A^m] & & [A^n,A^m] \mysquare [A^n,A^n] \ar[ll]_-{\circ_{A^n,A^n,A^m}} \\
			& & & & \\
			\one \mysquare [A^n,A^m] \ar[uu]^{j_{A^m} \mysquare \id_{[A^n,A^m]}} \ar[uurr]_{\lambda^{\Coll_{\mysquare}}_{[A^n,A^m]}} & & & & [A^n,A^m] \mysquare \one \ar[uu]_{\id_{[A^n,A^m]} \mysquare j_{A^n}} \ar[uull]^{\rho^{\Coll_{\mysquare}}_{[A^n,A^m]}} }$$
		
		$$\adjustbox{max width=\columnwidth}{\xymatrix{([A^m,A^r] \times [A^k,A^s]) \mysquare ([A^n,A^m] \times [A^l,A^k]) \ar[rrrr]^-{\boxtimes_{[A^m,A^r], [A^k,A^s], [A^n,A^m], [A^l,A^k]}} \ar[dd]_{+_{A^m,A^r,A^k,A^s} \mysquare +_{A^n,A^m,A^l,A^k}} & & & & ([A^m,A^r] \mysquare [A^n,A^m]) \times ([A^k,A^s] \mysquare [A^l,A^k]) \ar[dd]^{\circ_{A^n,A^m,A^r} \times \circ_{A^l,A^k,A^s}} \\
				& & & & \\
				[A^{m+k},A^{r+s}] \mysquare [A^{n+l},A^{m+k}] \ar[ddrr]^{\quad\circ_{A^{n+l},A^{m+k},A^{r+s}}} & & & & [A^n,A^r] \times [A^l,A^s] \ar[ddll]_{+_{A^n,A^r,A^l,A^s}\quad} \\
				& & & & \\
				& & [A^{n+l},A^{r+s}] & & }}$$
		
		$$\xymatrix{ & & \one \times \one \cong \one \ar[ddll]_{j_{A^n} \times j_{A^m}} \ar[ddrr]^{j_{A^{n+m}}} & & \\
			& & & & \\
			[A^n,A^n] \times [A^m,A^m] \ar[rrrr]^{+_{A^n,A^n,A^m,A^m}} & & & & [A^{n+m},A^{n+m}] }$$
		
		$$\adjustbox{max width=\columnwidth}{\xymatrix{([A^n,A^m] \times [A^l,A^k]) \times [A^r,A^s] \ar[rrrr]^{\alpha^{\Coll_{\times}}_{[A^n,A^m], [A^l,A^k], [A^r,A^s]}} \ar[dd]_{+_{A^n,A^m,A^l,A^k} \times \id_{[A^r,A^s]}} & & & & [A^n,A^m] \times ([A^l,A^k] \times [A^r,A^s]) \ar[dd]^{\id_{[A^n,A^m]} \times +_{A^l,A^k,A^r,A^s}} \\
				\\
				[A^{n+l},A^{m+k}] \times [A^r,A^s] \ar[ddrr]^{\qquad+_{A^{n+l},A^{m+k},A^r,A^s}} & & & & [A^n,A^m] \times [A^{l+r},A^{k+s}] \ar[ddll]_{+_{A^n,A^m,A^{l+r},A^{k+s}}\qquad}
				\\
				\\
				& & [A^{n+l+r},A^{m+k+s}] & & }}$$
		
		$$\adjustbox{max width=\columnwidth}{\xymatrix{\Tone \times [A^n,A^m] \ar[rrr]^-{O^{GEnd(A)} \times \id_{[A^n,A^m]}} \ar[ddrrr]_{\lambda^{\Coll_{\times}}_{[A^n,A^m]}} & & & [\Tone,\Tone] \times [A^n,A^m] \cong [A^n,A^m] \times [\Tone,\Tone] \ar[dd]^{+_{A^n,A^m,\one,\one}}_{+_{\one,\one,A^n,A^m}} & & & [A^n,A^m] \times \Tone \ar[lll]_-{\id_{[A^n,A^m]} \times O^{GEnd(A)}} \ar[ddlll]^{\quad\rho^{\Coll_{\times}}_{[A^n,A^m]}} \\
				\\
				& & & [A^n,A^m] & & & }}$$
		Note that the final diagram maps $\Tone$ to $[\Tone, \Tone]$ rather than to $[A^0,A^0]$.  This is because $a^0: A^0 \rightarrow \Tone$ is the empty cartesian product and is hence the terminal collection $\id: \Tone \rightarrow \Tone$.  But moreover, the collection $[\Tone, \Tone]$ is also $\id$.  To see this, consider all possible collection homomorphisms from $a \mysquare \id$ to $\id$.  Because $\id$ is terminal, there is only one.  And since the functor $- \mysquare \id$ is adjoint to $[\id, -]$, the natural isomorphism of homsets implies that there is a single unique map from $A$ to $[\id, \id]$ (i.e. $[\Tone, \Tone]$).  Hence $[\Tone, \Tone]$ is the terminal collection $\id$.
		
		In showing the commutativity of these diagrams we will often suppress associators by MacLane's coherence theorem. For the first diagram, the one asserting associativity of composition in $GEnd(A)$, we consider the diagram in Figure 1 whose boundary is obtained by currying the boundary of the original diagram.  The commutativity of this second diagram then implies the commutativity of the original.  The commutativity of the top leftmost square follows by the functoriality of $\mysquare$.  The commutativity of the middle, bottom left, and top right squares follows from the fact that composing and then evaluating is equivalent by definition to two consecutive evaluations.  Finally, the bottom right square commutes trivially.  Therefore composition in $GEnd(A)$ is associative.
		\begin{landscape}
			$$\adjustbox{max width=\columnwidth}{\xymatrix{ & & & & [A^l,A^k] \mysquare [A^m,A^l] \mysquare [A^n,A^m] \mysquare A^n \ar[ddllll]_{\circ_{A^m,A^l,A^k} \mysquare \id_{[A^n,A^m]} \mysquare \id_{A^n}\qquad\quad} \ar[ddddll]^{\quad\id_{[A^l,A^k]} \mysquare \id_{[A^m,A^l]} \mysquare \epsilon^{A^m}_{A^l}} \ar[ddddrr]^{\quad\id_{[A^l,A^k]} \mysquare \circ_{A^n,A^m,A^l} \id_{A^n}} \ar[ddrrrr]^{\qquad\qquad\id_{[A^l,A^k]} \mysquare \id_{[A^m,A^l]} \mysquare \epsilon^{A^n}_{A^m}} & & & & \\
					\\
					[A^m,A^k] \mysquare [A^n,A^m] \mysquare A^n \ar[dddd]_{\id_{[A^m,A^k]} \mysquare \epsilon^{A^n}_{A^m}} & & & & & & & & [A^l,A^k] \mysquare [A^m,A^l] \mysquare A^m \ar[dddd]^{\id_{[A^l,A^k]} \mysquare \epsilon^{A^m}_{A^l}} \\
					\\
					& & [A^l,A^k] \mysquare [A^m,A^l] \mysquare A^m \ar[ddll]_{\circ_{A^m,A^l,A^k} \mysquare \id_{A^m}\quad} \ar[ddrr]^{\qquad\id_{[A^l,A^k]} \mysquare \epsilon^{A^m}_{A^l}} & & & & [A^l,A^k] \mysquare [A^n,A^l] \mysquare A^n \ar[ddll]_{\id_{[A^l,A^k]} \mysquare \epsilon^{A^n}_{A^l}\qquad} \ar[ddrr]^{\quad\id_{[A^l,A^k]} \mysquare \epsilon^{A^n}_{A^l}}  & & \\
					\\
					[A^m,A^k] \mysquare A^m \ar[ddrrrr]^{\epsilon^{A^m}_{A^k}} & & & & [A^l,A^k] \mysquare A^l \ar[dd]_{\epsilon^{A^l}_{A^k}} & & & & [A^l,A^k] \mysquare A^l \ar[ddllll]_{\epsilon^{A^l}_{A^k}} \\
					\\
					& & & & A^k & & & & 
			}}$$
			$$\textbf{Figure 1}$$
		\end{landscape}

		We then consider the following two diagrams
		$$\adjustbox{max width=\columnwidth}{\xymatrix{ & & [A^m,A^m] \mysquare [A^n, A^m] \mysquare A^n \ar[rrrr]^{\id_{[A^m,A^m]} \mysquare \epsilon^{A^n}_{A^m}} & & & & [A^m, A^m] \mysquare A^m \ar[ddrr]^{\epsilon^{A^m}_{A^m}}  & & \\
				\\
				\one \mysquare [A^n,A^m] \mysquare A^n \ar[uurr]^{j_{A^m} \mysquare \id_{[A^n,A^m]} \mysquare \id_{A^n}\qquad} \ar[ddrrrr]_{\id_{\one} \mysquare \epsilon^{A^n}_{A^m}\qquad} & & & & & & & & A^m \\
				\\
				& & & & \one \mysquare A^m \ar[uuuurr]^{j_{A^m} \mysquare \id_{A^m}} \ar[uurrrr]_{\quad\lambda^{\Coll_{\mysquare}}_{A^m}} & & & & 
		}}$$
		
		$$\adjustbox{max width=\columnwidth}{\xymatrix{[A^n,A^m] \mysquare (\one \mysquare A^n) \ar[rrr]^-{\id_{[A^n,A^m]} \mysquare j_{A^n} \mysquare \id_{A^n}} \ar[rrrdd]^{\qquad\id_{[A^n,A^m]} \mysquare \lambda^{\Coll_{\mysquare}}_{A^n}} \ar[dd]_{\overline{\alpha}^{\Coll_{\mysquare}}_{[A^n,A^m], \one, A^n}} & & & [A^n,A^m] \mysquare ([A^n,A^n] \mysquare A^n) \ar[rrr]^-{\id_{[A^n,A^m]} \mysquare \epsilon^{A^n}_{A^n}} & & & [A^n,A^m] \mysquare A^n \ar[dd]^{\epsilon^{A^n}_{A^m}} \\
				\\
				([A^n,A^m] \mysquare \one) \mysquare A^n \ar[rrr]^-{\rho^{\Coll_{\mysquare}}_{[A^n,A^m]} \mysquare \id_{A^n}} & & & [A^n,A^m] \mysquare A^n \ar[uurrr]^{=} \ar[rrr]^{\epsilon^{A^n}_{A^m}} & & & A^m
		}}$$
		whose boundaries are obtained by currying the boundaries of the left and right unit axiom diagrams, respectively, for $GEnd(A)$ to be a $\Coll$-cat.  Note then that the left square in the first curried diagram commutes by the naturality of $\epsilon$ while the right triangle commutes due to the fact that $j_X$ was defined to be the currying of $\lambda^{\Coll_{\mysquare}}_X$, the $X$ component of the left unitor from $\Coll$ with respect to the product $\mysquare$.  For the second diagram we have that the leftmost triangle commutes as an instance of the triangle coherence condition with respect to the monoidal product $\mysquare$ in $\Coll$.  The middle triangle commutes by the definition of $j$, just as we saw for the rightmost triangle in the previous diagram.  The final triangle in second diagram commutes trivially.  Thus we have that composition in $GEnd(A)$ is also unital with respect to the same unitors in $\Coll$.
		
		We then consider the diagram in Figure 2 whose boundary is obtained by currying the diagram asserting that the product $+$ respects composition in $GEnd(A)$.  The top left square commutes by the functoriality of the $\mysquare$ product.  The square to the right of this functoriality square commutes by the adjunction used to define $+$.  The square to the right of these first two commutative squares also commutes by the functoriality of $\mysquare$.  The bottom left square commutes by the definition of $\epsilon$ implying that composition followed by evaluation is the same as double evaluation.  The square to its right commutes by the adjunction used to define $+$.  The top right square commutes from the fact that the product $\boxtimes$ involves a projection.  Hence the two sides of this square must commute as they differ only in the order in which those projections occur.  The square below and to the left as well as the square below and to the right of the previous square commute by the naturality of the $\boxtimes$ product.  The bottom left square commutes by the fact that composition followed by evaluation is the same as double evaluation.
		\begin{landscape}
			$$\adjustbox{max width=\columnwidth}{\xymatrix{ & & & & & & ([A^m,A^r] \times [A^k,A^s]) \mysquare ([A^n,A^m] \times [A^l,A^k]) \mysquare (A^n \times A^l) \ar[dddllllll]_{+_{A^m,A^r,A^k,A^s} \mysquare +_{A^n,A^l,A^m,A^k} \mysquare \id_{A^{n+l}}\qquad\qquad\qquad} \ar[ddddddll]_{\id_{[A^m,A^r] \times [A^k,A^s]} \mysquare +_{A^n,A^m,A^l,A^k} \mysquare \id_{A^{n+l}}\qquad\quad} \ar[ddd]^{\id_{[A^m,A^r] \times [A^k,A^s]} \mysquare \boxtimes_{[A^n,A^m],[A^l,A^k],A^n,A^l}} \ar[dddrrrr]^{\qquad\qquad\qquad\qquad\qquad\boxtimes_{[A^m,A^r],[A^k,A^s],[A^n,A^m],[A^l,A^k]} \mysquare \id_{A^n \times A^l}} & & & & \\
					\\
					\\
					[A^{m+k},A^{r+s}] \mysquare [A^{n+l},A^{m+k}] \mysquare A^{n+l} \ar[ddddddddd]_{\circ_{A^{n+l},A^{m+k},A^{r+s}} \id_{A^{n+l}}} \ar[ddddddrr]^{\id_{[A^{m+k},A^{r+s}]} \mysquare \epsilon^{A^{n+l}}_{A^{m+k}}} & & & & & & ([A^m,A^r] \times [A^k,A^s]) \mysquare [([A^n,A^m] \mysquare A^n) \times ([A^l,A^k] \mysquare A^l)] \ar[dddddd]^{\id_{[A^m,A^r] \times [A^k,A^s]} \mysquare \epsilon^{A^n}_{A^m} \times \epsilon^{A^l}_{A^k}} \ar[dddrr]^{\qquad\qquad\qquad\boxtimes_{[A^m,A^r],[A^k,A^s],[A^n,A^m] \mysquare A^n,[A^l,A^k] \mysquare A^l}} & & & & [([A^m,A^r] \mysquare [A^n,A^m] ) \times ([A^k,A^s] \mysquare [A^l,A^k])] \mysquare (A^n \times A^l) \ar[dddll]_{\boxtimes_{[A^m,A^r] \mysquare [A^n,A^m], [A^k,A^s] \mysquare [A^l,A^k], A^n, A^l)}\qquad\qquad\quad\qquad} \ar[dddddd]^{(\circ_{A^n,A^m,A^r} \times \circ_{A^l,A^k,A^s}) \mysquare (\id_{A^n} \times \id_{A^l})} \\
					\\
					\\
					& & & & ([A^m,A^r] \times [A^k,A^s]) \mysquare [A^{n+l},A^{m+k}] \mysquare A^{n+l} \ar[uuullll]_{\qquad\qquad\qquad+_{A^m,A^r,A^k,A^s} \mysquare \id_{[A^{n+l},A^{m+k}]} \mysquare \id_{A^{n+l}}} \ar[dddrr]^{\qquad\qquad\id_{[A^m,A^r] \times [A^k,A^s]} \mysquare \epsilon^{A^{n+l}}_{A^{m+k}}} & & & & ([A^m,A^r] \mysquare [A^n,A^m] \mysquare A^n) \times ([A^k,A^s] \mysquare [A^l,A^k] \mysquare A^l) \ar[ddddddll]^{\qquad(\id_{[A^m,A^r]} \mysquare \epsilon^{A^n}_{A^m}) \times (\id_{[A^k,A^s]} \mysquare \epsilon^{A^l}_{A^k})} \ar[ddddddrr]_{(\circ_{A^n,A^m,A^r} \mysquare \id_{A^n}) \times (\circ_{A^l,A^k,A^s} \mysquare \id_{A^l})\qquad\quad} & & \\
					\\
					\\
					& & [A^{m+k},A^{r+s}] \mysquare A^{m+k} \ar[ddddddrrrr]^{\epsilon^{A^{m+k}}_{A^{r+s}}} & & & & ([A^m,A^r] \times [A^k,A^s]) \mysquare (A^m \times A^k) \ar[llll]_{+_{A^m,A^r,A^k,A^s} \mysquare \id_{A^{m+k}}} \ar[ddd]_{\boxtimes_{[A^m,A^r], [A^k,A^s], A^m, A^k}} & & & & ([A^n,A^r] \times [A^l,A^s]) \mysquare (A^n \times A^l) \ar[ddd]^{\boxtimes_{[A^n,A^r], [A^l,A^s], A^n, A^l}} \\
					\\
					\\
					[A^{n+l},A^{r+s}] \mysquare A^{n+l} \ar[dddrrrrrr]_{\epsilon^{A^{n+l}}_{A{r+s}}\qquad\qquad} & & & & & & ([A^m,A^r] \mysquare A^m) \times ([A^k,A^s] \mysquare A^k) \ar[ddd]^{\epsilon^{A^m}_{A^r} \times \epsilon^{A^k}_{A^s}} & & & & ([A^n,A^r] \mysquare A^n) \times ([A^l,A^s] \mysquare A^l) \ar[dddllll]^{\qquad\qquad\epsilon^{A^n}_{A^r} \times \epsilon^{A^l}_{A^s}} \\
					\\
					\\
					& & & & & & A^{r+s} = A^r \times A^s & & & &  
			}}$$
			$$\textbf{Figure 2}$$
		\end{landscape}
		\noindent
		
		We then have the diagram	
		$$\adjustbox{max width=\columnwidth}{\xymatrix{ & & ([A^n,A^n] \times [A^m, A^m]) \mysquare (A^n \times A^m) \ar[rrrr]^{\boxtimes_{[A^n,A^n], [A^m, A^m], A^n, A^m}} & & & & ([A^n, A^n] \mysquare A^n) \times ([A^m, A^m] \mysquare A^m) \ar[ddrr]^{\quad\epsilon^{A^n}_{A^n} \times \epsilon^{A^m}_{A^m}}  & & \\
				\\
				(\one \times \one) \mysquare (A^n \times A^m) \ar[uurr]^{(j_{A^n} \times j_{A^m}) \mysquare (\id_{A^n} \times \id_{A^m})\qquad\qquad} \ar[rrrr]^{\boxtimes_{\one, \one, A^n, A^m}} \ar[ddrrrr]_{p_1 \mysquare \id_{A^n \times A^m}\qquad} & & & & (\one \mysquare A^n) \times (\one \mysquare A^m) \ar[uurr]^{(j_{A^n} \mysquare \id_{A^n}) \times (j_{A^m} \mysquare \id_{A^m})\qquad\qquad} \ar[rrrr]^{\lambda^{\Coll_{\mysquare}}_{A^n} \times \lambda^{\Coll_{\mysquare}}_{A^m}} & & & & A^n \times A^m \\
				\\
				& & & & \one \mysquare (A^n \times A^m) \ar[uurrrr]_{\qquad\qquad\lambda^{\Coll_{\mysquare}}_{A^n \times A^m}} & & & & 
		}}$$
		whose boundary comes from the currying of the diagram which asserts that $+$ preserves identities.  For this diagram, the top left square commutes by the naturality of the interchange morphism $\boxtimes$.  The top right triangle commutes by the definition of $\epsilon$.  The bottom square commutes as it is the inverse of the unit coherence diagram for $\overline{\lambda}$ following from the duoidal structure for $\Coll$.  Note that although the coherence condition in the definition of a duoidal category is presented with respect to $\overline{\lambda}$ and the morphism $\delta: I \odot I \rightarrow I$, the inverse diagram shown here also follows from the fact $\lambda$ is an isomorphism and the collections $\one \times \one \rightarrow \Tone$ and $\one \hookrightarrow \Tone$ are isomorphic.  Hence this square, and therefore the outer diagram, must commute.
		
		Next we consider the diagram in Figure 3 whose boundary is obtained by currying the associativity diagram required of $GEnd(A)$ to be a monoid object in $\Coll\Cat$.  The top pentagon commutes from the fact that $\boxtimes$ is defined via a projection and hence the order in which we project does not change the result.  In the bottom square, the top triangle commutes by the definition of the associator.  The remaining three triangles in this square commute by the functoriality of the cartesian product.
		
		We next consider the diagrams in Figures 4 and 5 whose boundaries are obtained by currying the left and right unit diagrams required of $GEnd(A)$ to be a monoid object in $\Coll\Cat$, respectively.
		
		We first consider Figure 4.  The leftmost square commutes by the functoriality of $\mysquare$.  We shall now, following clockwise from the top of the diagram, check the commutativity of the five regions incident with this leftmost square.  The first square commutes by the functoriality of $\mysquare$.  The pentagon commutes by the definition of $+$.  The next square commutes by the naturality of $\boxtimes$.  The square incident only on an edge is an instance of the sixth commutativity axiom for $\Coll$ to be a duoidal category.  The bottom adjacent triangle commutes by the definition of $\overline{\rho}^{\Coll_{\times}}$ and functoriality of $\mysquare$.  We shall now look at the right end of the diagram.  Starting with the top right-most triangle, we see that this region commutes by the functoriality of the internal hom $[-,-]$.  The adjacent square to its right commutes by the naturality of $\epsilon$.  The adjacent square directly below this one, along the bottom of the diagram (we shall look at the square to its left last), commutes by the naturality of $\rho$.  The square to the left of this one, which shares an edge with it, commutes by the functoriality of $\times$.  The adjacent square above this one also commutes by the functoriality of $\times$.  The triangle to the right of this one commutes by the definition of $\mathcal{O}$ and functoriality of $\times$.  The final region, the square which was previously skipped, commutes by the fact that the two composites which bound it are two factorizations of
		\begin{landscape}
			$$\adjustbox{max width=\columnwidth}{\xymatrix{ & & ([A^n,A^m] \times [A^l,A^k] \times [A^r,A^s]) \mysquare (A^n \times A^l \times A^r) \ar[ddll]^{\quad\qquad\qquad\qquad\boxtimes_{[A^n,A^m] \times [A^l,A^k],[A^r,A^s],A^n \times A^l,A^r}} \ar[ddrr]_{\boxtimes_{[A^n,A^m],[A^l,A^k] \times [A^r,A^s], A^n, A^l \times A^r}\qquad\qquad\quad} & & \\
					\\
					(([A^n,A^m] \times [A^l,A^k]) \mysquare (A^n \times A^l)) \times ([A^r,A^s] \mysquare A^r) \ar[dd]^{\boxtimes_{[A^n,A^m],[A^l,A^k],A^n,A^l} \times \id_{[A^r,A^s] \mysquare A^r}} & & & & ([A^n,A^m] \mysquare A^n) \times (([A^l,A^k] \times [A^r,A^s]) \mysquare (A^l \times A^r)) \ar[dd]_{\id_{[A^n,A^m] \mysquare A^n} \times \boxtimes_{[A^l,A^k], [A^r,A^s],A^l,A^r}} \\
					\\
					(([A^n,A^m] \mysquare A^n) \times ([A^l,A^k] \mysquare A^l)) \times ([A^r,A^s] \mysquare A^r) \ar[rrrr]^{\alpha^{\Coll_{\times}}_{[A^n,A^m] \mysquare A^n,[A^l,A^k] \mysquare A^l,[A^r,A^s] \mysquare A^r}} \ar[ddrr]^{\qquad\qquad\quad(\id_{[A^n,A^m] \mysquare A^n} \times \epsilon^{A^l}_{A^k}) \times \id_{[A^r,A^s] \mysquare A^r}} \ar[dddd]^{(\epsilon^{A^n}_{A^m} \times \epsilon^{A^l}_{A^k}) \times \id_{[A^r,A^s] \mysquare A^r}} & & & & ([A^n,A^m] \mysquare A^n) \times (([A^l,A^k] \mysquare A^l) \times ([A^r,A^s] \mysquare A^r)) \ar[ddll]_{\id_{[A^n,A^m] \mysquare A^n} \times (\epsilon^{A^l}_{A^k} \times \id_{[A^r,A^s] \mysquare A^r})\quad\qquad\qquad} \ar[dddd]_{\id_{[A^n,A^m] \mysquare A^n} \times (\epsilon^{A^l}_{A^k} \times \epsilon^{A^r}_{A^s})} \\
					\\
					& & ([A^n,A^m] \mysquare A^n) \times A^k \times ([A^r,A^s] \mysquare A^r) \ar[ddll]_{\epsilon^{A^n}_{A^m} \times \id_{A^k} \times \id_{[A^r,A^s] \mysquare A^r}\quad\qquad} \ar[ddrr]^{\qquad\qquad\id_{[A^n,A^m] \mysquare A^n} \times \id_{A^k} \times \epsilon^{A^r}_{A^s}} & & \\
					\\
					A^m \times A^k \times ([A^r,A^s] \mysquare A^r) \ar[rr]^{\id_{A^m} \times \id_{A^k} \times \epsilon^{A^r}_{A^s}} & & A^m \times A^k \times A^s & & ([A^n,A^m] \mysquare A^n) \times A^k \times A^s \ar[ll]_{\epsilon^{A^n}_{A^m} \times \id_{A^k} \times \id_{A^s}} \\
			}}$$
			$$\textbf{Figure 3}$$
		\end{landscape}
		\noindent

		\begin{landscape}
			$$\adjustbox{max width=\columnwidth}{\xymatrix{ & & & & ([A^n,A^m] \times [\Tone, \Tone]) \mysquare A^n \ar[rrrrrr]^{+ \mysquare \id} \ar[ldddd]^{\id \mysquare \overline{\rho}} & & & & & & [A^n \times \Tone, A^m \times \Tone] \mysquare A^n \ar[rrrrrr]^{[\overline{\rho},\rho] \mysquare \id} \ar[ddrrrr]^{[\overline{\rho}, \id] \mysquare \id} \ar[ddlll]_{\id \mysquare \overline{\rho}} & & & & & & [A^n, A^m] \mysquare A^n \ar@/^1.5pc/[rrrrdddddd]^{\epsilon} \\
					\\
					& & & & & & & [A^n \times \Tone, A^m \times \Tone] \mysquare (A^n \times \Tone) \ar[ddrrrrrrrrr]^{\epsilon} & & & & & & & [A^n, A^m \times \Tone] \mysquare A^n \ar[uurr]_{\quad[\id,\rho] \mysquare \id} \ar[ddrr]^{\epsilon} \\
					\\
					& & & ([A^n, A^m] \times [\Tone, \Tone]) \mysquare (A^n \times \Tone) \ar[uurrrr]^{+ \mysquare \id} \ar[rrrrr]^{\boxtimes} & & & & & ([A^n, A^m] \mysquare A^n) \times ([\Tone, \Tone] \mysquare \Tone) \ar[rrr]_{\epsilon \times \id} & & & A^m \times ([\Tone, \Tone] \mysquare \Tone) \ar[rrrrr]_{\id \times \epsilon} & & & & & A^m \times \Tone \ar[rrrrdd]^{\rho} \\
					\\
					([A^n,A^m] \times \Tone) \mysquare A^n \ar@/^2pc/[uuuuuurrrr]^{(\id \times \mathcal{O}) \mysquare \id\quad} \ar[rrrr]^{\id \mysquare \overline{\rho}} \ar@/_1.5pc/[ddddrrrrrrrrrr]_{\rho \mysquare \id} & & & & ([A^n,A^m] \times \Tone) \mysquare (A^n \times \Tone) \ar[luu]^{(\id \times \mathcal{O}) \mysquare \id\qquad} \ar[rrrr]^{\boxtimes} \ar[ddddrrrrrr]^{\rho \mysquare \rho} & & & & ([A^n,A^m] \mysquare A^n) \times (\Tone \mysquare \Tone) \ar[uu]^{\id \times (\mathcal{O} \mysquare \id)} \ar[rrrr]^{\epsilon \times \id} \ar[ddrrrr]_{\id \times \mu} & & & & A^m \times (\Tone \mysquare \Tone) \ar[luu]^{\id \times (\mathcal{O} \mysquare \id)\quad} \ar[uurrrr]^{\id \times \mu} & & & & & & & & A^m \\
					\\
					& & & & & & & & & & & & ([A^n,A^m] \mysquare A^n) \times \Tone \ar[lldd]_{\rho} \ar[uuuurrrr]^{\epsilon \times \id} \\
					\\
					& & & & & & & & & & [A^n,A^m] \mysquare A^n \ar@/_2pc/[uuuurrrrrrrrrr]_{\epsilon}
			}}$$
			$$\textbf{Figure 4}$$
			
			
			$$\adjustbox{max width=\columnwidth}{\xymatrix{ & & & & ([\Tone, \Tone] \times [A^n,A^m]) \mysquare A^n \ar[rrrrrr]^{+ \mysquare \id} \ar[ldddd]^{\id \mysquare \overline{\lambda}} & & & & & & [\Tone \times A^n, \Tone \times A^m] \mysquare A^n \ar[rrrrrr]^{[\overline{\lambda},\lambda] \mysquare \id} \ar[ddrrrr]^{[\overline{\lambda}, \id] \mysquare \id} \ar[ddlll]_{\id \mysquare \overline{\lambda}} & & & & & & [A^n, A^m] \mysquare A^n \ar@/^1.5pc/[rrrrdddddd]^{\epsilon} \\
					\\
					& & & & & & & [\Tone \times A^n, \Tone \times A^m] \mysquare (\Tone \times A^n) \ar[ddrrrrrrrrr]^{\epsilon} & & & & & & & [A^n, \Tone \times  A^m] \mysquare A^n \ar[uurr]_{\quad[\id,\lambda] \mysquare \id} \ar[ddrr]^{\epsilon} \\
					\\
					& & & ([\Tone, \Tone] \times [A^n, A^m]) \mysquare (\Tone \times A^n) \ar[uurrrr]^{+ \mysquare \id} \ar[rrrrr]^{\boxtimes} & & & & & ([\Tone, \Tone] \mysquare \Tone) \times ([A^n, A^m] \mysquare A^n) \ar[rrr]_{\id \times \epsilon} & & & ([\Tone, \Tone] \mysquare \Tone) \times A^m \ar[rrrrr]_{\epsilon \times \id} & & & & & \Tone \times A^m \ar[rrrrdd]^{\lambda} \\
					\\
					(\Tone \times [A^n,A^m]) \mysquare A^n \ar@/^2pc/[uuuuuurrrr]^{(\mathcal{O} \times \id) \mysquare \id\quad} \ar[rrrr]^{\id \mysquare \overline{\lambda}} \ar@/_1.5pc/[ddddrrrrrrrrrr]_{\lambda \mysquare \id} & & & & (\Tone \times [A^n,A^m]) \mysquare (\Tone \times A^n) \ar[luu]^{(\mathcal{O} \times \id) \mysquare \id\qquad} \ar[rrrr]^{\boxtimes} \ar[ddddrrrrrr]^{\lambda \mysquare \lambda} & & & & (\Tone \mysquare \Tone) \times ([A^n,A^m] \mysquare A^n) \ar[uu]^{(\mathcal{O} \mysquare \id) \times \id} \ar[rrrr]^{\id \times \epsilon} \ar[ddrrrr]_{\mu \times \id} & & & & (\Tone \mysquare \Tone) \times A^m \ar[luu]^{(\mathcal{O} \mysquare \id) \times \id\quad} \ar[uurrrr]^{\mu \times \id} & & & & & & & & A^m \\
					\\
					& & & & & & & & & & & & \Tone \times ([A^n,A^m] \mysquare A^n) \ar[lldd]_{\lambda} \ar[uuuurrrr]^{\id \times \epsilon} \\
					\\
					& & & & & & & & & & [A^n,A^m] \mysquare A^n \ar@/_2pc/[uuuurrrrrrrrrr]_{\epsilon}
			}}$$
			$$\textbf{Figure 5}$$
		\end{landscape}
		\noindent

        \noindent
        the currying of the following map:
		$$[\rho_{A^n}^{\Coll_{\times}},\id_{A^m \times \Tone}]:[A^n \times \Tone, A^m \times \Tone] \rightarrow [A^n,A^m \times \Tone]$$
		
		We now consider Figure 5.  The explanations of why each of these regions commutes are completely analogous to those for Figure 4.  The only essential differences are either that the content of certain maps lies in a different cartesian factor (i.e. on the left side of an identity map rather than the right) or that some regions are given in terms of the left unitor transformation instead of the right.
		
		We can hence conclude that the endomorphism globular PRO is in fact a globular PRO.
		
	\section{Algebras for a Globular PRO}
		Just as in the classical case, we can define algebras for a globular PRO, without use of $GEnd(A)$, via a sequence of action maps as follows.
		
		\begin{definition}
			An \emph{algebra for a globular PRO} $\mathcal{P}$ is given by a degenerate collection $a:A \rightarrow \Tone$ together with, for all $n,m \in \mathbb{N}$, a series of collection homomorphisms ${\Omega_{n,m} : \mathcal{P}(n,m) \mysquare A^n \rightarrow A^m}$ which each make the following diagrams commute.
		\end{definition}
		$$\adjustbox{max width=\columnwidth}{\xymatrix{[\mathcal{P}(m,r) \mysquare \mathcal{P}(n,m)] \mysquare A^n \ar[rrr]^{\alpha^{\Coll_{\mysquare}}_{\mathcal{P}(m,r), \mathcal{P}(n,m), A^n}} \ar[dd]_{\circ_{n,m,r} \mysquare \id_{A^n}} & & & \mathcal{P}(m,r) \mysquare [\mathcal{P}(n,m) \mysquare A^n] \ar[rrr]^-{\id_{\mathcal{P}(m,r)} \mysquare \Omega_{n,m}} & & &  \mathcal{P}(m,r) \mysquare A^m \ar[dd]^{\Omega_{m,r}} \\
				\\
				\mathcal{P}(n,r) \mysquare A^n \ar[rrrrrr]_{\Omega_{n,r}} & & & & & & A^r}}$$
		
		$$\adjustbox{max width=\columnwidth}{\xymatrix{[\mathcal{P}(n,m) \times \mathcal{P}(r,s)] \mysquare [A^n \times A^r] \ar[rrr]^-{+_{n,m,r,s} \times \id_{n+r}} \ar[dd]_{\boxtimes_{\mathcal{P}(n,m), \mathcal{P}(r,s), A^n, A^r}} & & & \mathcal{P}(n+r,m+s) \mysquare A^{n+r} \ar[dd]^{\Omega_{n+r,m+s}} \\
				& & & \\
				[\mathcal{P}(n,m) \mysquare A^n] \times [\mathcal{P}(r,s) \mysquare A^r] \ar[rrr]_-{\Omega_{n,m} \times \Omega_{r,s}} & & & A^m \times A^s = A^{m+s} }}$$
		
		$$\xymatrix{ & & \mathcal{P}(n,n) \mysquare A^n \ar[ddrr]^{\Omega_{n,n}} & & \\
			& & & & \\
			\one \mysquare A^n \ar[rrrr]_{\lambda^{\Coll_{\mysquare}}_{A^n}} \ar[uurr]^{j_n \mysquare \id_{A^n}} & & & & A^n }$$
		
		We can again express the previous notion of algebras instead as representations of our PRO via currying.
		
		\begin{definition}
			A $P$\emph{-module} for a globular PRO $\mathcal{P}$ is a globular PRO homomorphism ${f:\mathcal{P} \rightarrow GEnd(A)}$ for some collection $a:A \rightarrow \Tone$.  An $\emph{algebra}$ is a $\mathcal{P}$-module such that the collection $a:A \rightarrow \Tone$ is degenerate.  In particular, the arity map factors as $a = [id] \circ !_{A}$.
		\end{definition}
		
		We immediately get the following result regarding induced algebras.
		
		\begin{theorem} \label{PullbackAlg}
			An algebra for a globular PRO $\mathcal{P}$ is an algebra for every globular PRO $\mathcal{Q}$ which maps to $\mathcal{P}$.  In particular, an algebra for $\mathcal{P}$ is an algebra for every globular sub-PRO.
		\end{theorem}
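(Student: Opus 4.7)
The plan is to reduce the statement to the fact that globular PRO morphisms compose, using the representation-style definition of an algebra. An algebra for $\mathcal{P}$ has been defined as a globular PRO homomorphism $f: \mathcal{P} \rightarrow GTaut(A)$ for some degenerate collection $a: A \rightarrow \Tone$. Given a globular PRO morphism $g: \mathcal{Q} \rightarrow \mathcal{P}$, the composite $f \circ g: \mathcal{Q} \rightarrow GTaut(A)$ is the candidate algebra structure on $A$ for $\mathcal{Q}$. All that must be verified is that this composite is itself a globular PRO homomorphism, since the target $GTaut(A)$ is unchanged and $A$ remains a degenerate collection.

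To carry this out I would first check that $\Coll$-functors compose as $\Coll$-functors. This is immediate from general enriched category theory: if $g: \mathcal{Q} \rightarrow \mathcal{P}$ and $f: \mathcal{P} \rightarrow GTaut(A)$ are each given by components on hom-objects $g_{n,m}: \mathcal{Q}(n,m) \rightarrow \mathcal{P}(g(n),g(m))$ and $f_{n,m}: \mathcal{P}(n,m) \rightarrow GTaut(A)(f(n),f(m))$ that respect composition $\circ$ and identity identifications $j$, then the pasting of the two associativity and unit squares yields the corresponding squares for $f \circ g$. Next I would verify that the strict monoidal structure is preserved under composition. Since the definition of a globular PRO morphism forces $\widehat{F}_{n,m} = j^{\mathcal{P}'}_{n+m}$ and $e = j^{\mathcal{P}'}_{0}$, both $f$ and $g$ are strict on the monoidal structure at the level of the underlying functors: each is the identity on the object monoid $\mathbb{N}$ and each strictly preserves $+$ in the sense encoded by $\widehat{F}$. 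Thus $f \circ g$ is again the identity on objects, strictly preserves $+$, and has the required $\widehat{F}$ and $e$ data equal to the appropriate identity identifications of $GTaut(A)$.

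It then remains to confirm that the large pentagonal coherence diagram in the definition of a morphism of globular PROs commutes for the composite. Because $\widehat{F}$ and $e$ are given by identity identifications in a strict cartesian-duoidal enriched monoidal category, this coherence diagram reduces, under the unit laws for $\circ$ in $\mathcal{P}$ and $GTaut(A)$, to the assertion that $f \circ g$ preserves the interchange morphism $\boxdot$. This preservation is inherited by composing the corresponding diagrams for $g$ and $f$ end to end: the outer boundary of the pasted diagram for $f \circ g$ factors as the $g$-coherence followed by the $f$-coherence, applied to the image of $\boxdot^{\mathcal{Q}}$ in $\mathcal{P}$ and then in $GTaut(A)$. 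The main (and only) obstacle is essentially bookkeeping: keeping track of the unitors $\overline{\rho}^{\Coll}$ and $\overline{\lambda}^{\Coll}$ and verifying by MacLane coherence that the two pasted pentagons compose to the single pentagon for $f \circ g$; no genuine new identity is needed.

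Finally, the sub-PRO case follows by specialization: a globular sub-PRO inclusion $\iota: \mathcal{Q} \hookrightarrow \mathcal{P}$ is, by the usual definition, a globular PRO morphism, so the preceding argument applies to give $f \circ \iota: \mathcal{Q} \rightarrow GTaut(A)$ as the induced algebra structure. This yields both assertions of the theorem.
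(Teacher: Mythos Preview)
Your proposal is correct and follows the same route as the paper: the paper states this theorem without proof as an ``immediate result'' obtained by precomposing the defining representation homomorphism $f:\mathcal{P}\rightarrow GTaut(A)$ with the given map $g:\mathcal{Q}\rightarrow\mathcal{P}$. Your careful verification that strict monoidal $\Coll$-functors compose is more detailed than anything the paper supplies, but it is exactly the content the paper is taking for granted.
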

		
		Note that we previously defined a special class of globular PROs, called `weakenable' globular PROs, that had the appropriate structure to give each hom-collection the structure of a strict $\omega$-category.  One reason these PRO's are of particular interest is because the algebras for a weakenable globular PRO also gain the structure of a strict $\omega$-category.  Hence, the algebras for a weakenable globular PRO may be thought of as strict $\omega$-categories with extra structure.
	
	\section{The Free Monoidal and Path Category PROs on a \Coll-graph}
	
	Every category has an underlying graph.  It is obtained by forgetting the composition and identity structure.  Analogously, for every enriched category there is an underlying enriched graph which is obtained by the same process.  In general, these special graphs are defined as follows.
	\begin{definition}
		Given a duoidal category $\mathcal{D}$, a $\mathcal{D}\emph{-graph}$ $\mathbf{G} = (V,E)$ consists of a set of objects $V$, the elements of which are called $\emph{vertices}$, and a family of objects $E$, which we shall call $\emph{edge objects}$, consisting of, for all $X,Y \in V$, an object $G(X,Y)$ in $\mathcal{D}$.
	\end{definition}
	
	\begin{definition}
		A $\mathcal{D} \emph{-graph morphism}$ $H:\mathcal{A} \rightarrow \mathcal{B}$ consists of a function $H:\text{Obj}(\mathcal{A}) \rightarrow \text{Obj}(\mathcal{B})$ together with a family of morphisms $\{H_{X,Y}:A(X,Y) \rightarrow B(H(X),H(Y))\}$ from $\mathcal{D}$ with $X,Y \in \text{Obj}(\mathcal{A})$.
	\end{definition}
	
	Let $\mathbb{N}\mathcal{D}\Graph$ be the full subcategory of $\mathcal{D}\Graph$ consisting of the $\mathcal{D}$-graphs whose object set is $\mathbb{N}$.  If $\mathcal{D}$ has all countable coproducts, a $\mathcal{D}$-graph $\mathbf{G} = (V,E)$ in $\mathbb{N}\mathcal{D}\Graph$ can be seen as a bi-graded object in $\mathcal{D}$ since every object $G(n,m) \in E$ is indexed by a pair of natural numbers.  This fact induces a bi-grading on $E$.  But since $\mathcal{D}$ has all countable coproducts, the coproduct over the objects of $V$ gives a single object in $\mathcal{D}$ doubly graded over $\mathbb{N}$.  This allows us to canonically identify $\mathbb{N}\mathcal{D}\Graph$ with the category consisting of objects in $\mathcal{D}$ equipped with a bi-grading over $\mathbb{N}$ together with the maps which preserve the bi-grading.  We shall here denote this category $\BiGrd\mathcal{D}$.  Note that, given a pair of objects $X,Y \in obj(\mathcal{D})$, these are precisely the morphisms $f: X \rightarrow Y$ which may be written as a two parameter family of $\mathcal{D}$-morphism $\{f_{i,j}:X(i,j) \rightarrow Y(i,j) \}$.
	
	The category $\BiGrd\mathcal{D}$ has a natural monoidal structure given by the functor
	$$\oplus: \BiGrd\mathcal{D} \times \BiGrd\mathcal{D} \rightarrow \BiGrd\mathcal{D}$$
	which maps a pair of bi-graded objects $X$ and $Y$ from $\mathcal{D}$ to the object $X \oplus Y$, which has the following induced grading
	$$(X \oplus Y)(n,m) := \coprod_{\substack{n=i+j \\ m=l+k}}X(i,l) \odot Y(j,k)$$
	where $\odot$ is the second monoidal product in the duoidal category $\mathcal{D}$.  Given two morphisms $f = \{f_{i,j}:X(i,j) \rightarrow Z(i,j) \}$ and $g = \{g_{l,k}:Y(l,k) \rightarrow W(l,k) \}$ in $\BiGrd\mathcal{D}$ we get
	$$f \oplus g = \{(f \oplus g)_{n,m}:(X \oplus Y)(l,k) \rightarrow (Z \oplus W)(l,k) \}$$
	with components given by
	$$(f \oplus g)_{n,m} := \coprod_{\substack{n=i+j \\ m=l+k}}f_{i,l} \odot g_{j,k}$$
	for each $n,m \in \mathbb{N}$.  The monoidal unit for the product $\oplus$ is the object $\mathfrak{I}$ which is given as $\mathfrak{I}(0,0) = U$, where  $U$ is the monoidal unit for the second monoidal structure $\odot$ on $\mathcal{D}$, and $\mathfrak{I}(i,j) = E$, where  $E$ is the initial object defined by the empty coproduct in $\mathcal{D}$, for all other $i,j \in \mathbb{N}$.  Note that $E$ must exist by the requirement that $\mathcal{D}$ have all countable coproducts.  Moreover, the coproduct structure in $\mathcal{D}$ induces a coproduct $X \coprod Y$ in $\BiGrd\mathcal{D}$.  It is defined to be the identity on objects and has the coproduct in $\mathcal{D}$ of edge objects $X(i,j)\coprod Y(i,j)$ as its edge object $(X\coprod Y)(i,j)$.
	
	\begin{definition}
		A \emph{monoidal} $\mathbb{N}\mathcal{D}$\emph{-graph} $(\mathbf{M}, \diamond, \iota)$ is a monoid in the category $\BiGrd\mathcal{D}$, where $\mathbf{M} \in Obj(\BiGrd\mathcal{D})$, the bi-graded $\mathcal{D}$-morphism $\diamond: \mathbf{M} \oplus \mathbf{M} \rightarrow \mathbf{M}$ is the monoidal product, and $\iota: \mathfrak{I} \rightarrow \mathbf{M}$ is the unit bi-graded $\mathbf{D}$-morphism such that $\diamond$ is associative and unital with respect to $\oplus$.
	\end{definition}
	
	Now that we have a notion of monoidal $\mathcal{D}$-graph, it's then natural to ask: given $\mathbf{G} \in \mathbb{N}\mathcal{D}\Graph$, can we construct a free monoidal $\mathcal{D}$-graph $M(\mathbf{G})$ on $\mathbf{G}$?  Fortunately we can.
	
	\begin{definition}
		Given a $\mathcal{D}$-graph $\mathbf{G} \in \mathbb{N}\mathcal{D}\Graph$, the $\emph{free monoidal}$ $\mathbb{N}\mathcal{D}$-graph on $\mathbf{G}$ is the $\mathcal{D}$-graph
		$$M(\mathbf{G}) := \coprod\limits_{n \in \mathbb{N}} \bigoplus\limits_{k=1}^{n} \mathbf{G}$$
		where both $\mathbf{G}$ and $M(\mathbf{G})$ are thought of as objects in $\BiGrd\mathcal{D}$.  The monoidal product for $M(\mathbf{G})$ is given by the canonical functor $\oplus: M(\mathbf{G}) \oplus M(\mathbf{G}) \rightarrow M(\mathbf{G})$ which is closed by construction.  Note that when $n=0$ the product $\bigoplus\limits_{k=1}^{n} \mathbf{G}$ is the monoidal unit in $\BiGrd\mathcal{D}$.  Hence, the unit morphism $\iota_{M(\mathbf{G})}: \mathfrak{I} \rightarrow M(\mathbf{G})$ is the canonical functor which sends the only non-empty summand of $\mathfrak{I}$, $\mathfrak{I}(0,0) = U$, identically to the only non-empty summand of the empty $\oplus$-product $M(\mathbf{G})_{0}(0,0) = U$.  Moreover, $M: \mathbb{N}\mathcal{D}\Graph \rightarrow \Mon\mathbb{N}\mathcal{D}\Graph$ gives a functor by sending a given bi-graded $\mathcal{D}$-morphism $f = \{f_{i,j}:X(i,j) \rightarrow Y(i,j) \}$ to the morphism
		$$M(f) = \{M(f)_{i,j}:X(i,j) \rightarrow Y(i,j) \}$$
		whose components are given by
		$$M(f)_{i,j} = \coprod\limits_{n \in \mathbb{N}} \bigoplus\limits_{k=0}^{n}f_{i,j}$$
		for $i,j \in \mathbb{N}$.
	\end{definition}
	
	It is furthermore clear that the functor $M:\mathbb{N}\mathcal{D}\Graph \rightarrow \Mon\mathbb{N}\mathcal{D}\Graph$ has a right adjoint $W:\Mon\mathbb{N}\mathcal{D}\Graph \rightarrow \mathbb{N}\mathcal{D}\Graph$ that forgets the monoidal product and unit morphisms with which our $\mathcal{D}$-graph $\mathbf{G}$ is equipped.  In the special case where $\mathcal{D}$ is the category $\Coll$, we have the following result.
	
	\begin{theorem} \label{MonoidalFinAndMod}
		The functor $\mathcal{W}:\Mon\mathbb{N}\Coll\Graph \rightarrow \mathbb{N}\Coll\Graph$ which forgets both the monoidal product and unit structures for a given monoidal $\mathbb{N}\Coll$-graph is finitary and monadic over $\mathbb{N}\Coll\Graph$.
	\end{theorem}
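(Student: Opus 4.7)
The plan is to apply Beck's monadicity theorem to the adjunction $\mathcal{M} \dashv \mathcal{W}$ that was established just before this theorem, and then, separately, to argue that $\mathcal{W}$ preserves filtered colimits. Concretely, Beck's theorem requires that (i) $\mathcal{W}$ be conservative, and (ii) $\Mon\mathbb{N}\Coll\Graph$ have, and $\mathcal{W}$ preserve, coequalizers of all $\mathcal{W}$-split parallel pairs. Finitariness will then follow by identifying $\mathbb{N}\Coll\Graph$ with $\BiGrd\Coll$ and using the fact that $\Coll = \Glob/\Tone$ is a presheaf topos.

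Conservativity of $\mathcal{W}$ is essentially formal: a monoidal $\mathbb{N}\Coll$-graph morphism $(F,\widehat{F},e)$ is invertible as soon as the underlying bi-graded $\Coll$-morphism $F$ is, since the inverse of $F$ automatically acquires unique compatible monoidal structure maps forced by the coherence squares for monoidal morphisms.

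For creation of coequalizers of $\mathcal{W}$-split pairs, given $f,g: (\mathbf{M},\diamond_{\mathbf{M}},\iota_{\mathbf{M}}) \rightrightarrows (\mathbf{N},\diamond_{\mathbf{N}},\iota_{\mathbf{N}})$ with a split coequalizer $q:\mathbf{N} \to \mathbf{Q}$ of $\mathcal{W}(f),\mathcal{W}(g)$ in $\mathbb{N}\Coll\Graph$, I would transport the monoidal structure along $q$. Since split coequalizers are absolute colimits, the two-variable functor $- \oplus -$ preserves them, so $q \oplus q$ is a coequalizer of $f \oplus f$ and $g \oplus g$ (and likewise for $q \oplus q \oplus q$). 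Because $f$ and $g$ are monoidal morphisms, the composite $q \circ \diamond_{\mathbf{N}}: \mathbf{N} \oplus \mathbf{N} \to \mathbf{Q}$ coequalizes $f \oplus f$ and $g \oplus g$, so it factors uniquely through $q \oplus q$ as $\diamond_{\mathbf{Q}} \circ (q \oplus q)$. Setting $\iota_{\mathbf{Q}} := q \circ \iota_{\mathbf{N}}$ and verifying the associativity and unit coherence diagrams for $(\mathbf{Q},\diamond_{\mathbf{Q}},\iota_{\mathbf{Q}})$ (which reduce, via the universal property of $q \oplus q \oplus q$, to the corresponding axioms on $\mathbf{N}$) yields the unique lift of $q$ to $\Mon\mathbb{N}\Coll\Graph$.

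For finitariness, I would use that $\BiGrd\Coll$ has filtered colimits computed componentwise, and $\odot$, being the cartesian product in the presheaf topos $\Coll$, preserves filtered colimits in each variable. Consequently $\oplus$, which is built from finite coproducts of $\odot$-products, also preserves filtered colimits in each variable, so any filtered colimit of monoidal $\mathbb{N}\Coll$-graphs lifts from its underlying colimit in $\mathbb{N}\Coll\Graph$, i.e.\ $\mathcal{W}$ is finitary. The main obstacle I anticipate is the coequalizer step: care must be taken that the transported product $\diamond_{\mathbf{Q}}$ genuinely satisfies the pentagon and triangle identities on $\mathbf{Q}$, and while absoluteness of split coequalizers makes this work, the explicit diagram chase in $\BiGrd\Coll$ requires iterating the universal property of $q \oplus q \oplus q$ and tracking interactions between $\oplus$ and the two-sided unit morphism $\iota_{\mathbf{Q}}$.
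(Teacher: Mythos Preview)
Your proposal is correct and takes a genuinely different route from the paper. The paper's proof is quite terse: it simply asserts that $\Mon\mathbb{N}\Coll\Graph$ is the Eilenberg--Moore category for $\mathcal{W}\mathcal{M}$ ``from construction,'' so that the comparison functor is an equivalence, and then argues finitariness by an informal appeal to the nature of filtered colimits (any element or identification in the colimit already appears at some stage of the diagram). You instead run Beck's monadicity theorem explicitly, checking conservativity and creation of $\mathcal{W}$-split coequalizers via the standard transport-of-structure argument for monoid objects; and for finitariness you give a cleaner structural reason, namely that $\odot$ is the cartesian product in the presheaf topos $\Coll$ and hence commutes with filtered colimits in each variable, while the coproduct indexing $(X\oplus Y)(n,m)$ is finite for each bigrade. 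Your approach has the advantage of being the textbook pattern for monadicity of monoid objects in a monoidal category and would port directly to other bases; the paper's approach, though less detailed, leans on the explicit description of $\mathcal{M}(\mathbf{G})$ as $\coprod_n \mathbf{G}^{\oplus n}$ to make the claim ``clear.'' The anticipated diagram chase for the pentagon and triangle axioms on $(\mathbf{Q},\diamond_{\mathbf{Q}},\iota_{\mathbf{Q}})$ is routine given absoluteness of split coequalizers, so there is no genuine obstacle there.
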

	\begin{proof}
		It is immediately clear from construction that the functor $\mathcal{M}:\mathbb{N}\Coll\Graph \rightarrow \Mon\mathbb{N}\Coll\Graph$ is left adjoint to the forgetful functor $\mathcal{W}$.  It is furthermore clear from construction that $\mathcal{M}(\mathbb{N}\Coll\Graph)$ is the category of algebras for the monad $\mathcal{W}(\mathcal{M})$.  Here $\mathcal{M}$ is precisely the free functor dual to the structure forgotten by $\mathcal{W}$.  Hence the comparison functor $K^{\mathcal{W}(\mathcal{M})}:\Mon\mathbb{N}\Coll\Graph \rightarrow \left(\mathbb{N}\Coll\Graph \right)^{\mathcal{W}(\mathcal{M})}$ is an equivalence of categories.  It remains then to show that $\mathcal{W}$ preserves filtered colimits and is hence finitary.  But this is clear from the fact that $\mathcal{W}$ simply forgets the monoidal concatenation operation structure and that the special summand $M(\mathbf{G})(0,0) = E$ has unit structure with respect to this product.  This implies that given a filtered diagram in $\mathbb{N}\Coll\Cat$, any objects or morphisms that become equal in a colimit on that diagram were already made equal at some level in the filtered diagram.  Moreover, given any filtered diagram in $\mathbb{N}\Coll\Cat$ in which any new elements are generated, the components of that object already existed at some level in the diagram on which the colimit is taken.  Hence the preservation of the $\mathbb{N}\Coll\Graph$ structure in the filtered diagram ensures the preservation of the structure in the colimit.  And thus $\mathcal{W}$ preserves filtered colimits and is therefore finitary.
	\end{proof}

	In a similar way we can both create and forget the category structure on a given $\mathcal{D}$-graph as well.  We will here follow the construction as presented by Wolff$\cite{VCatVGraph:Wolff:1973}$.  First of all, the general process of forgetting the composition and identity structure for a generic $\mathcal{D}$-category to get a corresponding $\mathcal{D}$-graph gives a forgetful functor $U:\mathcal{D}\Cat \rightarrow \mathcal{D}\Graph$ which we shall use in the following definition.
	
	\begin{definition}
		Given a $\mathcal{D}$-graph $\textbf{G}$ and a $\mathcal{D}$-category $\mathcal{C}$, a $\mathcal{D} \emph{-diagram of type}$ $\textbf{G}$ $\emph{in}$ $\mathcal{C}$ is a $\mathcal{D}$-graph morphism $\varphi_{\textbf{G}}: \textbf{G} \rightarrow U(\mathcal{C})$ to the underlying $\mathcal{D}$-graph of $\mathcal{C}$.
	\end{definition}
	
	We are specifically interested in $\Coll$-graphs whose vertex set is the natural numbers.  Given such a graph $\mathbf{G}$ we can construct the free globular PRO $P(\mathbf{G})$ on $\mathbf{G}$.  But before describing this construction in detail, we first mention the following alternative free construction on a $\Coll$-graph.
	
	\begin{definition}
		Given a $\Coll$-graph $\mathbf{G}$, the $\emph{free}$ $\Coll$-category $F(\mathbf{G})$ $\emph{generated by}$ $\mathbf{G}$ is constructed as follows.  First set Obj$(F(\mathbf{G})) = $ Obj$(\mathbf{G})$.  Then take $X,Y \in$ Obj$(\mathbf{G})$.  If $X \neq Y$ we define the hom-object
		$$F(G)(X,Y) := \coprod G(E_0,E_1) \mysquare G(E_1,E_2) \mysquare ... \mysquare G(E_{n-1},E_n)$$
		where the coproduct is taken over all finite sequences $(E_0 = X, E_1, E_2, ..., E_{n-1}, E_n = Y)$ with $E_i \in$ Obj$(\mathbf{G})$ for $i \in \{0, 1, 2, ..., n-1, n\}$ and $n \geq 1$.  If $X = Y$ then we define the hom-object
		$$F(G)(X,X) := [\coprod G(X,E_1) \mysquare G(E_1,E_2) \mysquare ... \mysquare G(E_{n-1},X)] \coprod I$$
		to account for the fact that this hom-object should have enough structure to include identities.  The composition map
		$$\circ_{X,Y,Z}:F(G)(Y,Z) \mysquare F(G)(X,Y) \rightarrow F(G)(X,Z)$$
		is then defined in each coproduct summand by concatenating (via the operation $\mysquare$) strings of hom-objects from the corresponding summands.  More explicitly, if we have that both ${\tau_1 = (A, E_1,...,E_{n-1}, B)}$ and $\tau_2 = (B,D_1,...,D_{n-1},C)$ are strings of objects, then if we define
		$$\tau_1 \bullet \tau_2 := (A,E_1,...,E_{n-1}, B,D_1,...,D_{n-1},C)$$
		and suppose that both $A \neq B$ and $B \neq C$, then we can define $\circ_{A,B,C}$ to be the collection homomorphism which satisfies the equation
		$$\circ_{A,B,C}(\iota_{\tau_{1}} \mysquare \iota_{\tau_{2}}) = \iota_{\tau_1 \bullet \tau_2}(\alpha^k)$$
		where $\alpha^k$ is enough copies of the associator so that the source is completely left parenthesized and $\iota_{\tau_i}$ is the canonical inclusion into the coproduct summand corresponding to the sequence $\tau_i$.  If $A = B$ then we define $\circ_{A,B,C}$ so that it satisfies
		$$\circ_{A,B,C}(\iota_{\tau_{1}} \mysquare \iota_{\tau_{2}}) = \iota_{\tau_2}(\lambda_{\tau_2})$$
		where $\lambda_{\tau_2}$ is the $\tau_2$ component of the left unitor from $\Coll$.  If $B=C$ then $\circ_{A,B,C}$ is defined to satisfy
		$$\circ_{A,B,C}(\iota_{\tau_{1}} \mysquare \iota_{\tau_{2}}) = \iota_{\tau_1}(\rho_{\tau_1})$$
		with $\rho_{\tau_1}$ being the $\tau_1$ component of the right unitor from $\Coll$.  And we define $\circ_{A,B,C}$ to satisfy
		$$\circ_{A,B,C}(\iota_{\tau_{1}} \mysquare \iota_{\tau_{2}}) = \rho_I = \lambda_I$$
		if $A=B=C$.  The identity identifications $j_A: I \rightarrow F(G)(X,X)$ are defined to be the canonical inclusion map into the $I$ summand of the corresponding coproduct.
	\end{definition}
	
	This construction extends to maps of $\Coll$-graphs in the obvious way to give a functor $F: \Coll\Graph \rightarrow \Coll\Cat$.  We can now generate free $\Coll$-categories by listing certain generating hom-objects at the graph level.
	
	\begin{theorem} \label{UnderCatFinAndMod}
		The functor $U:\mathbb{N}\Coll\Cat \rightarrow \mathbb{N}\Coll\Graph$ which sends any $\mathbb{N}\Coll\Cat$ to its underlying $\mathbb{N}\Coll\Graph$ is finitary and monadic over $\mathbb{N}\Coll\Graph$.
	\end{theorem}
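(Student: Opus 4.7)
The plan is to follow the same three-step strategy used in the proof of the analogous result for $\mathcal{W}$. First I would observe that the construction $F:\mathbb{N}\Coll\Graph \rightarrow \mathbb{N}\Coll\Cat$ just defined is left adjoint to $U$. The adjunction is built directly into the coproduct definition of $F(\mathbf{G})$: any $\Coll$-graph morphism $\varphi: \mathbf{G} \rightarrow U(\mathcal{C})$ extends uniquely to a $\Coll$-functor $\widetilde{\varphi}: F(\mathbf{G}) \rightarrow \mathcal{C}$ by sending each coproduct summand indexed by a composable sequence $(E_0,E_1,\ldots,E_n)$ to the corresponding iterated composition $\circ^{\mathcal{C}} \circ (\varphi \mysquare \cdots \mysquare \varphi)$ in $\mathcal{C}$, and by sending the $I$-summand contributing identities to the identity identifications $j_{E}^{\mathcal{C}}$. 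Functoriality of this assignment, together with the universal property of the coproduct, gives the natural isomorphism of hom-sets witnessing the adjunction $F \dashv U$.

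Next I would identify $\mathbb{N}\Coll\Cat$ with the Eilenberg--Moore category $(\mathbb{N}\Coll\Graph)^{UF}$. Given a $\Coll$-category $\mathcal{C}$, the counit $\epsilon_{\mathcal{C}}: FU(\mathcal{C}) \rightarrow \mathcal{C}$ is exactly the $\Coll$-functor which composes every word of composable hom-object cells into its iterated composite and which sends the $I$-summand to the identity identifications. Applying $U$ gives a $UF$-algebra structure on $U(\mathcal{C})$ whose structure map encodes precisely the composition and identity data of $\mathcal{C}$. Conversely, given any algebra $h:UF(\mathbf{G}) \rightarrow \mathbf{G}$, restriction of $h$ to the length-two composable summands produces the candidate composition maps $\circ_{X,Y,Z}$ while restriction to the $I$-summand produces the identity identifications $j_X$. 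The two algebra axioms then translate term-by-term into the associativity and unital coherence diagrams for a $\Coll$-category, because the multiplication of the monad $UF$ is defined by flattening iterated concatenations of composable strings. Hence the comparison functor $K^{UF}$ is an equivalence of categories, proving monadicity.

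Finally I would check that $U$ preserves filtered colimits. Since $U$ merely discards the composition maps $\circ_{X,Y,Z}$ and the identity identifications $j_X$, the underlying $\Coll$-graph of a filtered colimit in $\mathbb{N}\Coll\Cat$ is computed by taking the filtered colimit of the underlying $\Coll$-graphs. Any cell appearing in a colimit hom-object must already appear at some finite stage of the diagram, and any equality among such cells must already be imposed at some finite stage; thus the composition and identity structure inherited at the level of the colimit is the unique one compatible with the structural maps from each stage, showing that the colimit computed in $\mathbb{N}\Coll\Graph$ lifts uniquely to $\mathbb{N}\Coll\Cat$. Therefore $U$ is finitary.

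The main obstacle is the middle step: verifying that a $UF$-algebra structure on a $\Coll$-graph produces a genuine $\Coll$-category structure and that this correspondence is inverse to the one induced by the counit. This requires carefully unpacking the coproduct description of $F$ so that the algebra axioms decompose, summand by summand, into the associativity and two unit diagrams from the definition of a $\Coll$-category. Because the decomposition is exactly parallel to the bookkeeping already carried out for $\mathcal{W}$, and because the generating summands of $F(\mathbf{G})$ are precisely strings of composable cells together with a formal identity, this matching is routine but requires explicit identification of each summand's contribution to the monad multiplication $\mu_{\mathbf{G}}: UFUF(\mathbf{G}) \rightarrow UF(\mathbf{G})$.
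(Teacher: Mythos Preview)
Your proposal is correct and follows essentially the same three-step strategy as the paper's proof: establish the adjunction $F \dashv U$, verify that the comparison functor $K^{UF}$ is an equivalence of categories, and then argue that $U$ preserves filtered colimits because any cell or identification in the colimit already arises at some finite stage. The paper's proof is considerably terser, asserting the first two points as ``clear from construction,'' whereas you have unpacked the bijection between $UF$-algebra structures and $\Coll$-category structures explicitly; but the underlying argument is the same.
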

	\begin{proof}
		It is clear from construction both that the functor $F:\mathbb{N}\Coll\Graph \rightarrow \mathbb{N}\Coll\Cat$ is left adjoint to $U:\mathbb{N}\Coll\Cat \rightarrow \mathbb{N}\Coll\Graph$ and that $F(\mathbb{N}\Coll\Graph)$ is precisely the category of algebras for the monad $U(F):\mathbb{N}\Coll\Graph \rightarrow \mathbb{N}\Coll\Graph$.  In other words, $F$ is precisely the free functor dual to the structure forgotten by $U$.  Hence the comparison functor $K^{U(F)}:\mathbb{N}\Coll\Cat \rightarrow \left(\mathbb{N}\Coll\Graph \right)^{U(F)}$ is an equivalence of categories.  It remains then to show that $U$ preserves filtered colimits and is hence finitary.  But this is clear from the fact that $U$ simply forgets the concatenation operation and that certain hom-objects have unit structures with respect to this concatenation.  This implies that given a filtered diagram in $\mathbb{N}\Coll\Cat$, any objects or morphisms that become equal in a colimit on that diagram were already made equal at some level in the filtered diagram.  Moreover, given any filtered diagram in $\mathbb{N}\Coll\Cat$ in which any new elements are generated, the components of that element already existed in some previous object at some level in the diagram over which the colimit is taken.  Hence the preservation of the $\mathbb{N}\Coll\Graph$ structure in the filtered diagram ensures the preservation of the structure in the colimit.  And thus $U$ preserves filtered colimits and is therefore finitary.
	\end{proof}

	\section{PRO Globularization}
	It is well know $\cite{BasicConEnCatTheory:Kelly:2005}$ that the functor which takes an enriched category to its underlying ordinary category has a left adjoint which generates the enriched structure.  This is done by taking copowers of the monoidal unit from the category over which the enrichment is taking place.  We will here perform a similar construction which constructs, from a classical PRO $P$, a globular PRO $\mathcal{P}$ whose algebras are precisely the algebras for $P$ in $\Glob$ which have an $\omega$-category structure with operations given by strict $\omega$-functors.  This is done by taking copowers not of the unit collection $I:\one \hookrightarrow \Tone$, but rather the terminal collection $\id:\Tone \rightarrow \Tone$.  Let $P$ be any ordinary set PRO and consider the following functor $G_P:P \rightarrow \mathcal{P}$ which maps $P$ to its globularization.
	$$n \mapsto n$$
	$$P(n,m) \mapsto \mathcal{P}(n,m) := P(n,m) \cdot \id = \coprod_{P(n,m)}\id$$
	Furthermore, the operations $\circ$ and $+$ are induced by the structure in $P$.
	
	Note first that, for all $n,m,p \in \mathbb{N}$ the hom-object $\mathcal{P}(m,p) \mysquare \mathcal{P}(n,m)$ can be written
	$$\mathcal{P}(m,p) \mysquare \mathcal{P}(n,m) = \left(\coprod_{P(m,p)}\id \right) \mysquare \left(\coprod_{P(n,m)}\id\right) = \coprod_{P(m,p)}\left( \id \mysquare \left(\coprod_{P(n,m)}\id\right) \right) =$$
	$$\coprod_{P(m,p)}\left( \coprod_{P(n,m)} \left( \id \mysquare \id \right) \right) \cong \coprod_{P(m,p) \times P(n,m)}\left( \id \mysquare \id \right)$$
	where the final isomorphism is simply a reindexing of the double coproduct by a single coproduct of pairs.  Note that this is possible because $\mysquare$ distributes over coproduct in both variables.  It distributes over the first variable because $-\mysquare \mathcal{A}$ is left adjoint to $[\mathcal{A},-]$, and hence preserves colimits.  To see why it distributes over the second variable, we consider the geometric realization functor on $\Coll$.  Each globular cell has a connected geometric realization.  And because coproducts in a topos are disjoint, and $\Coll$ is itself a topos, for an element from a collections $\mathcal{A}\mysquare\mathcal{B}$, all of the globular cells from $\mathcal{B}$ which color the pasting diagram which names the arity of the cell from $\mathcal{A}$ must come from the same summand in any coproduct, or else their geometric realization would be disconnected.  It therefore follows that $\mysquare$ distributes over coproduct in the second variable.
 
    Given the formulation of $\mathcal{P}(m,p) \mysquare \mathcal{P}(n,m)$ above, the induced composition operations on $\mathcal{P}$, for all $n,m,p \in \mathbb{N}$ are then given by
	$$\circ^{\mathcal{P}}_{n,m,p} := \circ^P_{n,m,p} \cdot \phi:(P(m,p) \times P(n,m)) \cdot (\id \mysquare \id) \rightarrow P(n,p) \cdot \id$$
	where $\circ^P_{n,m,p}$ is composition in $P$ and $\phi: \id \mysquare \id \rightarrow \id$ is the morphism in $\Coll$ ensuring that $\id$ is a monoid object with respect to the product $\mysquare$.
	
	We can similarly write $\mathcal{P}(n,m) \times \mathcal{P}(l,k)$
	$$\mathcal{P}(n,m) \times \mathcal{P}(l,k) = \left(\coprod_{P(n,m)}\id \right) \times \left(\coprod_{P(l,k)}\id\right) = \coprod_{P(n,m)}\left( \id \times \left(\coprod_{P(l,k)}\id\right) \right) =$$
	$$\coprod_{P(n,m)}\left( \coprod_{P(l,k)} \left( \id \times \id \right) \right) \cong \coprod_{P(n,m) \times P(l,k)}\left( \id \times \id \right)$$
	for all $n,m,l,k \in \mathbb{N}$.  The induced addition operations on $\mathcal{P}$, for all $n,m,l,k \in \mathbb{N}$ are then given by
	$$+^{\mathcal{P}}_{n,m,l,k} := +^P_{n,m,l,k} \cdot \Phi:(P(n,m) \times P(l,k)) \cdot (\id \times \id) \rightarrow P(n+l,m+k) \cdot \id$$
	where $+^P_{n,m,l,k}$ is addition in $P$ and $\Phi$ is the canonical isomorphism which is described by the left (equivalently right) cartesian unitor.
	
	The identity identifications $j_n: \one \rightarrow \mathcal{P}(n,n)$ are induced by the composition
	$$I \xrightarrow{\xi_n} P(n,n) \cdot I \hookrightarrow P(n,n) \cdot \id$$
	$$\sigma_n \mapsto (\iota(*), \sigma_n) \hookrightarrow (\iota_n(*), \sigma_n)$$
	for each $n \in \mathbb{N}$, where $\iota_n: \{*\} \rightarrow P(n,n)$ is the identity identification from the underlying set PRO $P$.

    We reiterate here why we prefer the usage of globular PRO instead of globular Lawvere theory.  Note that nothing in this construction requires that the PRO $P$ being globularlized be the special case of a Lawvere theory.  Any classical PRO $P$ can be globularized.
	
	\begin{theorem}
		The globularization $\mathcal{P}$ of a PRO $P$ is a globular PRO.
	\end{theorem}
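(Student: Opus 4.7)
The plan is to verify in turn the three pieces of data that comprise a globular PRO structure on $\mathcal{P}$: the $\Coll$-category structure, the enriched monoidal product $+$, and the fact that $O$ sends $*$ to $0 \in \mathbb{N}$. At each step I would exploit the factorization of every hom-object as $P(n,m) \cdot \id$, so that the relevant coherence diagrams split into a ``$\Set$-part'' coming from the underlying classical PRO $P$ and a ``$\Coll$-part'' coming from $\id$.

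First I would check that $\mathcal{P}$ is a $\Coll$-enriched category. Associativity of $\circ^{\mathcal{P}}$ reduces, after applying the distributivity isomorphisms between coproducts and $\mysquare$, to the commutativity of two diagrams: the associativity pentagon for $\circ^P$ in $\Set$ (which holds because $P$ is a PRO) and the associativity diagram for $\phi: \id \mysquare \id \rightarrow \id$ (which holds because $\id$ is terminal and hence trivially a monoid in $(\Coll, \mysquare, I)$). Unitality similarly splits: the $\Set$-side follows from unitality of $\iota_n$ in $P$, while the $\Coll$-side amounts to the unit axioms for the morphism $\phi$ together with the identification of $I \mysquare \id$ and $\id \mysquare I$ with $\id$, which are automatic from terminality.

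Next I would establish that $+: \mathcal{P} \times \mathcal{P} \rightarrow \mathcal{P}$ is a $\Coll$-functor of two variables and endows $\mathcal{P}$ with the structure of a strict cartesian-duoidal enriched monoidal category. The key observation is that $\id$ is the unit for the cartesian product in $\Coll$, so that $\id \times \id \cong \id$ canonically; thus the ``$\Coll$-part'' of $+^{\mathcal{P}}$ is literally an isomorphism. This reduces every one of the four diagrams required in the definition of a globular PRO (the interchange of $+$ and $\circ$, preservation of identities by $+$, associativity of $+$, unitality of $+$ with respect to $O$) to the corresponding commutative diagram for $+^P$, $\circ^P$, $\iota^P_n$, and $0 \in \mathbb{N}$ in the classical PRO $P$, together with a naturality statement for the interchange $\boxplus$ and the coherences among $\phi$, $\Phi$, and the unitors. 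For the last, I would invoke the proposition (proved earlier in the paper) that any monoidal category with finite products is cartesian-duoidal, applied to $(\Coll, \mysquare, I, \times, \id)$; this supplies the needed compatibility between $\phi$ and the cartesian unitors for free.

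The main obstacle I anticipate is the interchange diagram relating $+^{\mathcal{P}}$ and $\circ^{\mathcal{P}}$, because it is the only place where the $\mysquare$ and $\times$ structures of $\Coll$ genuinely interact through $\boxtimes$. The essential technical point is that the canonical comparison
\[
(P(m,r) \times P(k,s)) \cdot (\id \mysquare \id) \ \cong\ (P(m,r) \cdot \id) \mysquare (P(k,s) \cdot \id)
\]
together with the analogous identifications on the other edges of the square are compatible with $\boxtimes$; this amounts to the distributivity of $\mysquare$ over coproducts combined with the duoidal coherence for $\boxplus_{\id,\id,\id,\id}$, which collapses because the cartesian factor $\id$ is the unit. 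Once this combinatorial bookkeeping is done, the interchange axiom for $\mathcal{P}$ reduces to the interchange axiom for the underlying PRO $P$, completing the verification.
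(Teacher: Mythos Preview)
Your proposal is correct and follows essentially the same strategy as the paper: factor each hom-object as $P(n,m)\cdot\id$, then observe that every coherence diagram for $\mathcal{P}$ splits into a $\Set$-part inherited from the classical PRO $P$ and a $\Coll$-part which is automatic because $\id$ is terminal (hence a monoid for $\mysquare$ and the unit for $\times$). The paper's proof is considerably terser than yours---it simply asserts that the diagrams commute because ``this structure is faithfully preserved by the indexing on each hom-object''---so your more explicit handling of the interchange via distributivity of $\mysquare$ over coproducts and the duoidal coherence is a genuine elaboration of the same idea rather than a different route.
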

	
	\begin{proof}
		It is clear from construction that $\mathcal{P}$ is a cartesian-duoidal enriched category enriched over the cartesian-duoidal category $\Coll$ with object set $\mathbb{N}$.  It is furthermore clear from construction that $+^{\mathcal{P}}$ is simply addition at the level of objects.  We need then that $\mathcal{P}$ is a monoid $(\mathcal{P},+^{\mathcal{P}},\mathcal{I})$ in $(\Coll\Cat, \times, \one_{*})$, where $\one_{*}$ is the terminal $\Coll$-cat and $\mathcal{I}: \one_{*} \rightarrow \mathcal{P}$ is the $\Coll$-functor which maps the single object $* \in \id_{*}$ to $0 \in \mathbb{N}$ and the unique hom-object $\one_{*}(*,*)$ to $\mathcal{P}(0,0) = \id$, the unit for $\times$ in $\Coll\Cat$.  But this follows immediately from the fact that each of the relevant commutative diagrams was satisfied in the original non-globularized PRO.  As this structure is faithfully preserved by the indexing on each hom-object, the induced operations on the globularlized PRO satisfy the analogous commutativity conditions which ensure that $\mathcal{P}$ is a globular PRO as well.  Finally, the commutativity of the appropriate diagrams required of $\mathcal{P}$ in order for it to be a globular PRO follow immediately from construction.  Therefore $\mathcal{P}$ is a globular PRO.
	\end{proof}
	
	This leads us to the following key theorem of this work.  Among the consequences of this theorem is the result that the globularization of an ordinary PRO is canonically a weakenable globular PRO by construction.  This is of utmost importance because, as noted earlier, weakenable PROs impose upon each hom-object the structure of a strict $\omega$-category.  And as the content of the following theorem shows that the operations in the algebras for a globularized PRO are given by strict $\omega$-functors, when we eventually weaken our weakenable PROs through the use of contractions, we will get algebras for our weakenings that have operations given by weak $\omega$ functors in the sense described by Leinster in $\cite{leinster2000operads}$.

	\begin{theorem}
		Let $\mathcal{P}$ be the globularization of the ordinary PRO $P$.  The algebras for $\mathcal{P}$ are exactly the strict $\omega$-categories which are algebras for $P$ whose operations in $P$ are given by strict $\omega$-functors.
	\end{theorem}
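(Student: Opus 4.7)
The plan is to unpack what an algebra structure for $\mathcal{P}$ on a degenerate collection $a:A \rightarrow \Tone$ amounts to and to match it against the stated data. The first step is to decompose the action maps: since $- \mysquare A^n$ is a left adjoint (by the closed structure on $\Coll$) it preserves coproducts, so the identification $\mathcal{P}(n,m) = \coprod_{p \in P(n,m)} \id$ converts a collection morphism $\Omega_{n,m}: \mathcal{P}(n,m) \mysquare A^n \rightarrow A^m$ into a $P(n,m)$-indexed family $\{\omega^p_{n,m}: \id \mysquare A^n \rightarrow A^m\}$. Because $A^n$ is degenerate, the underlying globular set of $\id \mysquare A^n$ is $\mathcal{T}(A^n)$, so each $\omega^p_{n,m}$ is effectively a globular set map $\mathcal{T}(A^n) \rightarrow A^m$.

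Next, I would extract a strict $\omega$-category structure on $A$ from the identity morphism $\iota_n \in P(n,n)$ singled out by $j^{\mathcal{P}}_n:\one \rightarrow \mathcal{P}(n,n)$. The composition and unit diagrams in the algebra axioms force $\omega^{\iota_n}_{n,n}$ to exhibit $A^n$ as an algebra for the terminal globular operad $\id$, which is precisely a strict $\omega$-category structure on $A^n$. Compatibility with $+^{\mathcal{P}}$, built out of $+^P$ and the canonical isomorphism $\id \times \id \cong \id$, identifies this structure on $A^n$ with the $n$-fold cartesian power of the strict $\omega$-category structure on $A = A^1$. For general $p \in P(n,m)$, precomposing $\omega^p_{n,m}$ with the inclusion $I \hookrightarrow \id$ on the left factor and applying the left unitor produces a globular set map $f_p:A^n \rightarrow A^m$. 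The algebra composition axiom (through $\circ^{\mathcal{P}} = \circ^P \cdot \phi$) and unit axiom then yield $f_{p \circ q} = f_p \circ f_q$ and $f_{\iota_n} = \id_{A^n}$, so $p \mapsto f_p$ is a $P$-algebra structure on the underlying globular set of $A$. Moreover, compatibility of $\omega^p_{n,m}$ on all pasting-scheme cells of $\id$, not merely on the identity inclusion $I \hookrightarrow \id$, is precisely the condition that $f_p$ intertwines the two $\id$-actions on $A^n$ and $A^m$, which is the definition of a strict $\omega$-functor.

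Conversely, given a strict $\omega$-category $A$ with a $P$-algebra structure by strict $\omega$-functors $\{f_p\}$, I would reconstruct each $\omega^p_{n,m}$ as the composite of $\mathcal{T}(f_p):\mathcal{T}(A^n) \rightarrow \mathcal{T}(A^m)$ with the strict $\omega$-category structure map $\mathcal{T}(A^m) \rightarrow A^m$, and assemble these over $p$ via the coproduct to define $\Omega_{n,m}$. The required coherence diagrams then reduce to the monad-action axioms for $A$, the $P$-algebra axioms for $\{f_p\}$, and strict $\omega$-functoriality of each $f_p$. The main obstacle will be establishing the precise equivalence between compatibility of $\omega^p_{n,m}$ with the full $\id$-action and strict $\omega$-functoriality of $f_p$; this rests on the universal property of $\id$ as the terminal globular operad, which ensures that once $f_p$ respects the underlying strict $\omega$-category actions, every higher pasting operation transports automatically.
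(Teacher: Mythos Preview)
Your proposal is correct and follows essentially the same strategy as the paper: decompose each action map through the copower $\mathcal{P}(n,m) = P(n,m) \cdot \id$, read off the strict $\omega$-category structure from the identity component, obtain the $P$-algebra maps by restricting to the unit inclusion $I \hookrightarrow \id$, and identify strict $\omega$-functoriality with compatibility against the full $\id$-action.

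Two small points of comparison are worth recording. First, in the converse direction you define the $p$-component of the action as $\mathcal{T}(f_p)$ followed by the $\omega$-category structure map on $A^m$, whereas the paper sets $\Omega_{n,m} := \nu_{n,m} \circ (\id_{P(n,m)} \cdot \omega^n)$, i.e.\ applies the $\omega$-category action on $A^n$ first and then the $P$-operation. These two composites agree exactly because each $f_p$ is assumed to be a strict $\omega$-functor, so either choice is legitimate; the paper's ordering makes the ``composition preserves action'' diagram factor slightly more directly through the associativity of the $\id$-algebra structure on $A$. Second, what you describe as the coherence diagrams ``reducing to'' the known axioms is where the bulk of the paper's proof lives: the verification of the three algebra diagrams is carried out by subdividing each into many commuting regions (eight for the composition axiom, six for the monoidal-sum axiom) whose commutativity is traced back individually to functoriality of $\cdot$, naturality of copower reindexing, the duoidal coherence for $\boxtimes$, the $\id$-algebra axioms for $A$, and the $P$-algebra axioms. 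Your outline is accurate, but be aware that turning it into a complete argument requires this level of explicit bookkeeping rather than a one-line reduction.
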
 
	
	\begin{proof}
		Let $A$ be an algebra for the globular PRO $\mathcal{P}$.  Consider the hom-object $\mathcal{P}(1,1)$ which acts on $A$ via the action map $\omega: P(1,1) \cdot \Tone \mysquare A \rightarrow A$.  Note that the component of $\omega$ corresponding to the identity in $P$ gives a map of globular sets $\omega_{\id_P}: \Tone \mysquare A \rightarrow A$ which encodes the structure of a strict $\omega$-category on the globular set $A$ (as an algebra for the terminal collection).  To see that $A$ is moreover an algebra for the set PRO $P$, consider that the action map $\Omega_{n,m}: \mathcal{P}(n,m) \mysquare A^n \rightarrow A^m$ may be restricted so that the globular pasting portion of the action only acts by the image of the inclusion of generators $\one \hookrightarrow \Tone$.  This restricted map is precisely an action of the indexing set for the globular operations (i.e. an induced set $P(n,m)$) on the set $A$.  Furthermore, collectively these maps, for all $n,m \in \mathbb{N}$, satisfy the appropriate diagrams to induce the structure of a $P$-algebra on $A$.  It remains to show that the action of operations in $\mathcal{P}$ act on $A$ by strict $\omega$-functors.  This means that two components of an action (the cartesian portion taking place in the indexing set PRO and the globular pasting portion) can be applied in either order.  But this follows immediately from the fact that the action map can be factored so that either operation may be performed first together with the fact that each pair in the source $\mathcal{P}(n,m) \mysquare A^n$ maps to a particular cell in $A^m$ under $\Omega_{n,m}$.  Hence both of these factorization show that the operations in $\mathcal{P}$ act on $A$ by strict $\omega$-functors.
		
		Conversely, assume that $A$ is an algebra in $\Glob$ for the set PRO $P$ which has the structure of a strict $\omega$-category and whose operations in $P$ are given by strict $\omega$-functors.  We wish to show that it is also an algebra for $\mathcal{P}$.  Since $A$ is a strict $\omega$-category it admits the structure of an algebra for the terminal collection $\id$.  Hence there exists an action map $\omega: \Tone \mysquare A \rightarrow A$ where $A$ is here the collection equipped with arity map $[id] \circ !_A: A \rightarrow \Tone$.  Since $A$ is an algebra for $P$ it also admits a map to the ordinary endomorphism PRO on $A$.  This means that for each $n,m \in \mathbb{N}$ we have a map $P(n,m) \rightarrow \Glob(A^n,A^m)$, each of which can be curried to get maps $\nu_{n,m}: P(n,m) \cdot A^n \rightarrow A^m$.  Note then that using the identities discussed above we can construct an induced action map $\Omega_{n,m}: \mathcal{P}(n,m) \mysquare A^n \rightarrow A^m$ by first rewriting the domain as
		$$\mathcal{P}(n,m) \mysquare A^n = P(n,m) \cdot \id = \left(\coprod_{P(n,m)}\Tone \right) \mysquare A^n \cong \coprod_{P(n,m)}\left(\Tone \mysquare A^n\right) \cong \coprod_{P(n,m)}\left(\Tone \mysquare A\right)^n$$
		and letting $\Omega_{n,m}$ be defined as the composition
		$$\Omega_{n,m} := \nu_{n,m} \circ (\id_{P(n,m)} \cdot \omega^n)$$
		where $\omega^n: (\Tone \mysquare A)^n \rightarrow A^n$ is simply the $n$th cartesian power of $\omega$.  All that remains to be shown is that the diagrams
		$$\adjustbox{max width=\columnwidth}{\xymatrix{[\mathcal{P}(m,l) \mysquare \mathcal{P}(n,m)] \mysquare A^n \ar[rrr]^{\alpha^{\Coll_{\mysquare}}_{\mathcal{P}(m,l), \mathcal{P}(n,m), A^n}} \ar[dd]_{\circ_{n,m,l} \mysquare \id_{A^n}} & & & \mathcal{P}(m,l) \mysquare [\mathcal{P}(n,m) \mysquare A^n] \ar[rrr]^-{\id_{\mathcal{P}(m,l)} \mysquare \Omega_{n,m}} & & &  \mathcal{P}(m,l) \mysquare A^m \ar[dd]^{\Omega_{m,l}} \\
				\\
				\mathcal{P}(n,l) \mysquare A^n \ar[rrrrrr]_{\Omega_{n,l}} & & & & & & A^l}}$$
		
		$$\adjustbox{max width=\columnwidth}{\xymatrix{[\mathcal{P}(n,m) \times \mathcal{P}(l,k)] \mysquare [A^n \times A^l] \ar[rrr]^-{\boxtimes_{\mathcal{P}(n,m), \mathcal{P}(l,k), A^n, A^l}} \ar[dd]_{+_{n,m,l,k} \times \id_{n+l}} & & & [\mathcal{P}(n,m) \mysquare A^n] \times [\mathcal{P}(l,k) \mysquare A^l] \ar[dd]^{\Omega_{n,m} \times \Omega_{l,k}} \\
				& & & \\
				\mathcal{P}(n+l,m+k) \mysquare A^{n+l} \ar[rrr]_-{\Omega_{n+l,m+k}} & & & A^{m+k} = A^m \times A^k }}$$	
		
		$$\xymatrix{ & & \mathcal{P}(n,n) \mysquare A^n \ar[ddrr]^{\Omega_{n,n}} & & \\
			& & & & \\
			\one \mysquare A^n \ar[rrrr]_{\lambda^{\Coll_{\mysquare}}_{A^n}} \ar[uurr]^{j_n \mysquare \id_{A^n}} & & & & A^n }$$
		commute for all $n,m,l,k \in \mathbb{N}$.  When unpacking these diagrams explicitly via the definitions provided above for the relevant maps, the first two unfortunately become quite large.  This makes it impractical to attempt typesetting the complete diagrams all at once.  Instead, in order to show that these three diagrams commute, a schematic has been provided below for the complete diagrams with subsections of the center faces cut out and labeled.  Explicit versions of each of these subsections can then be found below, together with an explanation of why this subsection commutes.  The third diagram is small enough to be shown explicitly in a single diagram and follows the first two.  Note also that all unlabeled edges correspond to either a reindexing operation or a sequence of instances of unitors and interchange morphisms (here used to include a $\mysquare$ product with a cartesian power of a collection in the second variable into a cartesian power of $\mysquare$ products).
		\newpage
		
		\textbf{COMPOSITION PRESERVES ACTION}
		
		$$\adjustbox{max width=\columnwidth}{\xymatrix{ & & & & & & & & & & \cdot \ar[rr]^{\id_{P(m,l) \cdot \Tone} \mysquare [\id_{P(n,m)} \cdot \omega^n]} & & \cdot \ar[rrrr]^{\quad\id_{P(m,l) \cdot \Tone} \mysquare \nu_{n,m}} \ar[ddrr] & & & & \cdot \ar[rrrr] & & & & \cdot \ar[rrrrdddd] & & & & & & & & \\
				\\
				& & & & & & & & \cdot \ar[uurr] \ar[dd] & & & & & & \cdot \ar[ddrr] & & & & & & & & & & & & & & \\
				\\
				& & & & & & & & \cdot \ar[dd] & & & & \textbf{B} & & & & \cdot \ar[ddrr] & & & & \textbf{C} & & & & \cdot \ar[ddddrrrr]^{\id_{P(m,l) \cdot \omega^m}} & & & & \\
				\\
				& & & & \cdot \ar[uuuurrrr] & & & & \cdot \ar[dd] & & & & & & & & & & \cdot \ar[ddrr] & & & & & & & & & & \\
				\\
				& & & & & \textbf{A} & & & \cdot \ar[ddrrrr] & & & & & & & & & & & & \cdot \ar[rrrrrrrr] \ar[ddrrrr] & & & & & & & & \cdot \ar[dd]_{\nu_{m,l}} \\
				& & & & & & & & & & & & & & & & & & & & & & & & & & \textbf{F} & & \\
				\circ \ar[rr] \ar[rrrruuuu]^{\qquad\qquad\alpha^{\Coll_{\mysquare}}_{P(m,l) \cdot \Tone, P(n,m) \cdot \Tone, A^n}} \ar[dddd] & & \cdot \ar[rr] & & \cdot \ar[rr] & & \cdot \ar[rr] & & \cdot \ar[uu] \ar[ddrrrr] & & & & \cdot \ar[rrrr] & & & & \cdot \ar[rrrr] & & & & \cdot \ar[rrrr] & & & & \cdot \ar[ddrrrr] & & & & \circ \\
				& \textbf{D} & & & & & & & & & & & & & & & & & & \textbf{E} & & & & & & & & & \\
				& & & & & & & & \textbf{G} & & & & \cdot \ar[rrr] \ar[rrrrrdd] & & & \cdot \ar[rrr] & & & \cdot \ar[rrr] & & & \cdot \ar[rrr] & & & \cdot \ar[rrrr] & & & & \cdot \ar[uu] \\
				& & & & & & & & & & & & & & & & & & & \textbf{H} & & & & & & & & & \\
				\cdot \ar[rrrrr]_{[\circ^P_{n,m,l} \cdot \id_{\Tone \mysquare \Tone}] \mysquare \id_{A^n}\qquad} \ar[rrrrrrrruuuu] & & & & & \cdot \ar[rrrrrr]_{[\id_{P(n,l)} \cdot \phi] \mysquare \id_{A^n}} & & & & & & \cdot \ar[rrrrrr] & & & & & & \cdot \ar[rrrrrr] & & & & & & \cdot \ar[rrrrruu]_{\quad\quad\id_{P(n,l)} \cdot \omega^n\qquad} & & & & &
		}}$$
		
		\newpage
		\textbf{DIAGRAM A}
		
		$$\adjustbox{max width=\columnwidth}{\xymatrix{ & [(P(m,l) \cdot \Tone) \mysquare (P(n,m) \cdot \Tone)] \mysquare A^n \ar[ddl] \ar[ddr]^{\qquad\qquad\alpha^{\Coll_{\mysquare}}_{P(m,l) \cdot \Tone, P(n,m) \cdot \Tone, A^n}} & \\
				\\
				[P(m,l) \cdot [\Tone \mysquare (P(n,m) \cdot \Tone)]] \mysquare A^n \ar[dd] & & (P(m,l) \cdot \Tone) \mysquare [(P(n,m) \cdot \Tone) \mysquare A^n] \ar[dd] \\
				\\
				[P(m,l) \cdot [P(n,m) \cdot (\Tone \mysquare \Tone)]] \mysquare A^n \ar[dd] & & (P(m,l) \cdot \Tone) \mysquare [P(n,m) \cdot (\Tone \mysquare A^n)] \ar[dd] \\
				\\
				[(P(m,l) \times P(n,m)) \cdot (\Tone \mysquare \Tone)] \mysquare A^n \ar[dd] & & P(m,l) \cdot [\Tone \mysquare [P(n,m) \cdot (\Tone \mysquare A^n)]] \ar[dd] \\
				\\
				(P(m,l) \times P(n,m)) \cdot [(\Tone \mysquare \Tone) \mysquare A^n] \ar[ddr]^{\qquad\qquad\id_{P(m,l) \times P(n,m)} \cdot \alpha^{\Coll_{\mysquare}}_{\Tone, \Tone, A^n}} & & P(m,l) \cdot [P(n,m) \cdot [\Tone \mysquare (\Tone \mysquare A^n)]] \ar[ddl] \\
				\\
				& (P(m,l) \times P(n,m)) \cdot [\Tone \mysquare (\Tone \mysquare A^n)] & 
		}}$$
		
		We shall show the commutativity of this diagram by describing how each edge of this diagram acts on a generic element.  Let $\sigma_{\phi} \in P(m,l) \cdot \Tone$ be a cell of shape $\sigma \in \Tone$ indexed by an operation $\phi \in P(m,l)$.  Then let $\Sigma \in \mathcal{T}(P(n,m) \cdot \Tone)$ be a coloring of $\sigma_{\phi}$ by composite cells ${}^{\tau}\Sigma_{\psi} \in P(n,m)\cdot\Tone$, one for each sub-cell $\tau \in \sigma_{\phi}$.  Note that each composite cell may be indexed by the same $\psi$ because of the connectedness of $\Tone$.  Moreover, let $\kappa_n$ be a coloring of the shape of $(\sigma_{\phi},\Sigma)$.  Thus we start both compositions with a cell $((\sigma_{\phi},\Sigma),\kappa_n) \in [(P(m,l)\cdot\Tone) \mysquare (P(n,m)\cdot\Tone)] \mysquare A^n$.
		
		We begin the first composition by applying the associator for $\mysquare$ to $((\sigma_{\phi},\Sigma),\kappa_n)$ to get a coloring $\int$ of $\sigma_{\phi}$, where $\int$ is induced by the coloring of $\Sigma$ by $\kappa_n$.  Hence, for each sub-cell $\tau \in \sigma_{\phi}$, the composite cell of $\int$ which colors it is $({}^{\tau}\Sigma_{\psi},\kappa_n) \in [P(n,m)\cdot\Tone]\mysquare A^n$.  Since the colored cells of $\int$ all come from the same summand, we may send $({}^{\tau}\Sigma_{\psi},\kappa_n)$ to ${({}^{\tau}\Sigma,\kappa_n)_{\psi} \in P(n,m)\cdot[\Tone\mysquare A^n]}$.  Similarly, the sub-cells of $\sigma_{\phi}$ being colored do not rely on the summand denoted by $\phi$ to be colored.  Hence, this and the previous step together send $(\sigma_{\phi},(\Sigma_{\psi},\kappa_n))$ to $(\sigma,(\Sigma,\kappa_n)_{\psi})_{\phi}$.  Citing this independence from the summand index a third time gives $((\sigma,(\Sigma,\kappa_n))_{\psi})_{\phi}$ which can be re-indexed by a single operation ${(\psi,\phi) \in P(m,l) \times P(n,m)}$ to get $(\sigma,(\Sigma,\kappa_n))_{(\psi,\phi)} \in [P(m,l) \times P(n,m)]\cdot[\Tone \mysquare (\Tone \mysquare A^n)]$.
		
		Along the other composition, we first re-index $((\sigma_{\phi},\Sigma),\kappa_n)$ to get $((\sigma,\Sigma)_{\phi},\kappa_n)$.  Again by the connectedness of $\Tone$ we can re-index to get $(((\sigma,\Sigma)_{\psi})_{\phi},\kappa_n)$.  Reindexing further gives $((\sigma,\Sigma)_{(\psi,\phi)},\kappa_n)$; hence ${((\sigma,\Sigma),\kappa_n)_{(\psi,\phi)} \in [P(m,l) \times P(n,m)]\cdot[(\Tone \mysquare \Tone) \mysquare A^n]}$.  Then applying the associator within this single summand corresponding to $(\psi,\phi) \in P(m,l) \times P(n,m)$ must give the same cell $(\sigma,(\Sigma,\kappa_n))_{(\psi,\phi)}$ in \newline $[P(m,l) \times P(n,m)]\cdot[\Tone \mysquare (\Tone \mysquare A^n)]$ from above.
		\\

		\textbf{DIAGRAM B}
		
		$$\adjustbox{max width=\columnwidth}{\xymatrix{ & [P(m,l) \cdot \Tone] \mysquare [P(n,m) \cdot (\Tone \mysquare A^n)] \ar[dl] \ar[ddr] & \\
				P(m,l) \cdot [\Tone \mysquare [P(n,m) \cdot (\Tone \mysquare A^n)]] \ar[dd] & & \\
				& & [P(m,l) \cdot \Tone] \mysquare [P(n,m) \cdot (\Tone \mysquare A)^n] \ar[dd]^{\id_{P(m,l) \cdot \Tone} \mysquare [\id_{P(n,m)} \cdot \omega^n]} \ar[dl] \\
				P(m,l) \cdot [P(n,m) \cdot [\Tone \mysquare (\Tone \mysquare A^n)]] \ar[dd] & P(m,l) \cdot [\Tone \mysquare [P(n,m) \cdot (\Tone \mysquare A)^n]] \ar[dd] & \\
				& & [P(m,l) \cdot \Tone] \mysquare [P(n,m) \cdot A^n] \ar[dd] \\
				[P(m,l) \times P(n,m)] \cdot [\Tone \mysquare (\Tone \mysquare A^n)] \ar[dd] & P(m,l) \cdot [P(n,m) \cdot [\Tone \mysquare (\Tone \mysquare A)^n] \ar[ddl] & \\
				& & P(m,l) \cdot [\Tone \mysquare (P(n,m) \cdot A^n)] \ar[dd] \\
				[P(m,l) \times P(n,m)] \cdot [\Tone \mysquare (\Tone \mysquare A)^n] \ar[dd]_{\id_{P(m,l) \times P(n,m)} \cdot [\id_{\Tone} \mysquare \omega^n]} & & \\
				& & P(m,l) \cdot [P(n,m) \cdot (\Tone \mysquare A^n)] \ar[dd] \ar[dll] \\
				[P(m,l) \times P(n,m)] \cdot [\Tone \mysquare A^n] \ar[dd] & & \\
				& & P(m,l) \cdot [P(n,m) \cdot (\Tone \mysquare A)^n] \ar[dd]^{\id_{P(m,l)} \cdot [\id_{P(n,m)} \cdot \omega^n]} \ar[dll] \\
				[P(m,l) \times P(n,m)] \cdot [\Tone \mysquare A]^n \ar[ddr]^{\qquad\id_{P(m,l) \times P(n,m)} \cdot \omega^n} & & \\
				& & P(m,l) \cdot [P(n,m) \cdot A^n] \ar[dl] \\
				& [P(m,l) \times P(n,m)] \cdot A^n & \\
		}}$$
		
		Each of the top two regions of this diagram commute by the fact that $\mysquare$ preserves coproduct, and hence $\cdot$ is preserved.  The bottom two squares commute by the naturality of the operation of reindexing copowers.
		
		\newpage
		\textbf{DIAGRAM C}
		
		$$\adjustbox{max width=\columnwidth}{\xymatrix{ & [P(m,l) \cdot \Tone] \mysquare [P(n,m) \cdot A^n] \ar[dl] \ar[ddr]^{\quad\id_{P(m,l) \cdot \Tone} \mysquare \nu_{n,m}} & \\
				P(m,l) \cdot [\Tone \mysquare (P(n,m) \cdot A^n)] \ar[dd] & & \\
				& & [P(m,l) \cdot \Tone] \mysquare A^m \ar[dd] \\
				P(m,l) \cdot [P(n,m) \cdot (\Tone \mysquare A^n)] \ar[dd] & & \\
				& & P(m,l) \cdot [\Tone \mysquare A^m] \ar[dd] \\
				P(m,l) \cdot [P(n,m) \cdot (\Tone \mysquare A)^n] \ar[dd]_{\id_{P(m,l) \cdot [\id_{P(n,m)} \cdot \omega^n]}} & & \\
				& & P(m,l) \cdot [\Tone \mysquare A]^m \ar[ddl]_{\id_{P(m,l) \cdot \omega^m}} \\
				P(m,l) \cdot [P(n,m) \cdot A^n] \ar[dr]^{\id_{P(m,l) \cdot \nu_{n,m}}} & & \\
				& P(m,l) \cdot A^m & \\
		}}$$
		
		This diagram commutes by the fact that the operations in $\mathcal{P}$ act on $A$ as strict $\omega$-functors.  Hence, the $\nu$ and $\omega$ portion of an operation in $\mathcal{P}$ may be performed in either order.
		\\
		
		\textbf{DIAGRAM D}
		
		$$\adjustbox{max width=\columnwidth}{\xymatrix{ & [(P(m,l) \cdot \Tone) \mysquare (P(n,m) \cdot \Tone)] \mysquare A^n \ar[dr] \ar[ddl] & \\
				& & [P(m,l) \cdot [\Tone \mysquare (P(n,m) \cdot \Tone)]] \mysquare A^n \ar[d] \\
				[(P(m,l) \times P(n,m)) \cdot (\Tone \mysquare \Tone)] \mysquare A^n \ar[ddr] & & [P(m,l) \cdot [P(n,m) \cdot (\Tone \mysquare \Tone)]] \mysquare A^n \ar[d] \\
				& & [(P(m,l) \times P(n,m)) \cdot (\Tone \mysquare \Tone)] \mysquare A^n \ar[dl] \\
				& [P(m,l) \times P(n,m)] \cdot [(\Tone \mysquare \Tone) \mysquare A^n] & \\
		}}$$
		
		This diagram commutes by the fact that each of the two sides of the diagram two ways of performing the same copower reindexing.
		
		\newpage
		\textbf{DIAGRAM E}
		
		$$\adjustbox{max width=\columnwidth}{\xymatrix{ & [P(m,l) \times P(n,m)] \cdot [(\Tone \mysquare \Tone) \mysquare A^n] \ar[dl]_{\circ^P_{n,m,l} \cdot \id_{(\Tone \mysquare \Tone) \mysquare A^n}\qquad} \ar[ddr]^{\qquad\qquad\id_{P(m,l) \times P(n,m)} \cdot \alpha^{\Coll_{\mysquare}}_{\Tone, \Tone, A^n}} & \\
				P(n,l) \cdot [(\Tone \mysquare \Tone) \mysquare A^n] \ar[dd]_{\id_{P(n,l)} \cdot \alpha^{\Coll_{\mysquare}}_{\Tone, \Tone, A^n}} & & \\
				& & [P(m,l) \times P(n,m)] \cdot [\Tone \mysquare (\Tone \mysquare A^n)] \ar[dd] \ar[dll]_{\circ^P_{n,m,l} \cdot \id_{\Tone \mysquare (\Tone \mysquare A^n)}\qquad} \\
				P(n,l) \cdot [\Tone \mysquare (\Tone \mysquare A^n)] \ar[dd] & & \\
				& & [P(m,l) \times P(n,m)] \cdot [\Tone \mysquare (\Tone \mysquare A)^n] \ar[dd]^{\id_{P(m,l) \times P(n,m)} \cdot [\id_{\Tone} \mysquare \omega^n]} \ar[dll]_{\circ^P_{n,m,l} \cdot \id_{\Tone \mysquare (\Tone \mysquare A)^n}\qquad} \\
				P(n,l) \cdot [\Tone \mysquare (\Tone \mysquare A)^n] \ar[dd]_{\id_{P(n,l)} \cdot [\id_{\Tone} \mysquare \omega^n]} & & \\
				& & [P(m,l) \times P(n,m)] \cdot [\Tone \mysquare A^n] \ar[dd] \ar[dll]_{\circ^P_{n,m,l} \cdot \id_{\Tone \mysquare A^n}} \\
				P(n,l) \cdot [\Tone \mysquare A^n] \ar[dd] & & \\
				& & [P(m,l) \times P(n,m)] \cdot [\Tone \mysquare A]^n \ar[dd]^{\id_{P(m,l) \times P(n,m)} \cdot \omega^n} \ar[dll]_{\circ^P_{n,m,l} \cdot \id_{[\Tone \mysquare A]^n}} \\
				P(n,l) \cdot [\Tone \mysquare A]^n \ar[ddr]^{\id_{P(n,l)} \cdot \omega^n} & & \\
				& & [P(m,l) \times P(n,m)] \cdot A^n \ar[dl]_{\circ^P_{n,m,l} \cdot \id_{A^n}} \\
				& P(n,l) \cdot A^n & \\
		}}$$
		
		Each of these squares commute by the functoriality of the $\cdot$ operation.
		
		\newpage
		\textbf{DIAGRAM F}
		
		$$\adjustbox{max width=\columnwidth}{\xymatrix{ & P(m,l) \cdot [P(n,m) \cdot A^n] \ar[dl] \ar[ddr]^{\id_{P(m,l)} \cdot \nu_{n,m}} & \\
				[P(m,l) \times P(n,m)] \cdot A^n \ar[dd]_{\circ^P_{n,m,l} \cdot \id_{A^n}} & & \\
				& & P(m,l) \cdot A^m \ar[ddl]_{\nu_{m,l}} \\
				P(n,l) \cdot A^n \ar[dr]^{\nu_{n,l}} & & \\
				& A^l & 
		}}$$
		
		This diagram commutes by the fact that $A$ is an algebra for the underlying set PRO $P$.
		\\
		
		\textbf{DIAGRAM G}
		
		$$\adjustbox{max width=\columnwidth}{\xymatrix{ & [(P(m,l) \times P(n,m)) \cdot (\Tone \mysquare \Tone)] \mysquare A^n \ar[dr] \ar[ddl]_{[\circ^P_{n,m,l} \cdot \id_{\Tone \mysquare \Tone}] \mysquare \id_{A^n}\qquad} & \\
				& & [P(m,l) \times P(n,m)] \cdot [(\Tone \mysquare \Tone) \mysquare A^n] \ar[dd]^{\circ^P_{n,m,l} \cdot \id_{(\Tone \mysquare \Tone) \mysquare A^n}} \\
				[P(n,l) \cdot (\Tone \mysquare \Tone)] \mysquare A^n \ar[dd]_{[\id_{P(n,l)} \cdot \phi] \mysquare \id_{A^n}} \ar[drr] & & \\
				& & P(n,l) \cdot [(\Tone \mysquare \Tone) \mysquare A^n] \ar[ddl]^{\qquad\id_{P(n,l)} \cdot [\phi \mysquare \id_{A^n}]} \\
				[P(n,l) \cdot \Tone] \mysquare A^n \ar[dr] & & \\
				& P(n,l) \cdot [\Tone \mysquare A^n] & \\
		}}$$
		
		Both of these squares commute by the naturality of the reindexing operation.
		
		\newpage
		\textbf{DIAGRAM H}
		
		$$\adjustbox{max width=\columnwidth}{\xymatrix{ & P(n,l) \cdot [(\Tone \mysquare \Tone) \mysquare A^n] \ar[dr]^{\qquad\id_{P(n,l)} \cdot \alpha^{\Coll_{\mysquare}}_{\Tone, \Tone, A^n}} \ar[dl]_{\id_{P(n,l)} \cdot [\phi \mysquare \id_{A^n}]\qquad} & \\
				P(n,l) \cdot [\Tone \mysquare A^n] \ar[ddd] & & P(n,l) \cdot [\Tone \mysquare (\Tone \mysquare A^n)] \ar[d] \\
				& & P(n,l) \cdot [\Tone \mysquare (\Tone \mysquare A)^n] \ar[d]^{\id_{P(n,l)} \cdot [\id_{\Tone} \mysquare \omega^n]} \\
				& & P(n,l) \cdot [\Tone \mysquare A^n] \ar[d] \\
				P(n,l) \cdot [\Tone \mysquare A]^n \ar[dr]_{\id_{P(n,l)} \cdot \omega^n\qquad} & & P(n,l) \cdot [\Tone \mysquare A]^n \ar[dl]^{\qquad\id_{P(n,l)} \cdot \omega^n} \\
				& P(n,l) \cdot A^n & \\
		}}$$
		
		This diagram commutes by the fact that $A$ has the structure of a strict $\omega$-category by assumption.

		\newpage
		\textbf{MONOIDAL SUM PRESERVES ACTION}
		
		$$\adjustbox{max width=\columnwidth}{\xymatrix{ & & & & \cdot \ar[rr] \ar[dddd] & & \cdot \ar[rr]^{[\id_{P(n,m)} \cdot \omega^n] \times [\id_{P(l,k)} \cdot \omega^l]} & & \cdot \ar[rrdddd] \ar[rrrrdd]^{\nu_{n,m} \times \nu_{l,k}} & & & & \\
				\\
				& & \cdot \ar[uurr] & & & & & \textbf{B} & & & & & \cdot \ar[dd] \\
				\\
				\circ \ar[uurr]^{\qquad\qquad\boxtimes_{P(n,m) \cdot \Tone, P(l,k) \cdot \Tone, A^n, A^l}} \ar[dddd] & & \textbf{A} & & \cdot \ar[rr] & & \cdot \ar[rrdddd] & & & & \cdot \ar[dddd] & & \circ \\
				& & & & & \textbf{E} & & & & & & \textbf{C} & \\
				& & \cdot \ar[uurr] \ar[rr] & & \cdot \ar[rrdd] & & & & & & & & \\
				& & & \textbf{D} & & & & & & & & & \\
				\cdot \ar[uurr] \ar[ddrr]^{[+^P_{n,m,l,k} \cdot \id_{\Tone \times \Tone}] \mysquare \id_{A^n \times A^l}} \ar[rr] & & \cdot \ar[rr] & & \cdot \ar[rr] & & \cdot \ar[rr] & & \cdot \ar[rr] & & \cdot \ar[rr] & & \cdot \ar[uuuu]_{\nu_{n+l,m+k}} \\
				& & & & & & & \textbf{F} & & & & & \\
				& & \cdot \ar[rr]_{[\id_{P(n+l,m+k)} \cdot \phi] \mysquare \id_{A^n \times A^l}} & & \cdot \ar[rr] & & \cdot \ar[rr] & & \cdot \ar[rr] & & \cdot \ar[rruu]_{\id_{P(n+l,m+k)} \cdot \omega^{n+l}} & & 
		}}$$

		\newpage
		\textbf{DIAGRAM A}
		
		$$\adjustbox{max width=\columnwidth}{\xymatrix{ & [(P(n,m) \cdot \Tone) \times (P(l,k) \cdot \Tone)] \mysquare [A^n \times A^l] \ar[ddl] \ar[ddr]^{\qquad\qquad\boxtimes_{P(n,m) \cdot \Tone, P(l,k) \cdot \Tone, A^n, A^l}} & \\
				\\
				[(P(n,m) \times P(l,k)) \cdot (\Tone \times \Tone)] \mysquare [A^n \times A^l] \ar[ddd] & & [(P(n,m) \cdot \Tone) \mysquare A^n] \times [(P(l,k) \cdot \Tone) \mysquare A^l] \ar[ddd] \\
				\\
				\\
				[P(n,m) \times P(l,k)] \cdot [(\Tone \times \Tone) \mysquare (A^n \times A^l)] \ar[ddr]^{\qquad\qquad\id_{P(n,m) \times P(l,k)} \cdot \boxtimes_{\Tone, \Tone, A^n, A^l}} & & [P(n,m) \cdot (\Tone \mysquare A^n)] \times [P(l,k) \cdot (\Tone \mysquare A^l)] \ar[ddl] \\
				\\
				& [P(n,m) \times P(l,k)] \cdot [(\Tone \mysquare A^n) \times (\Tone \mysquare A^l)] & 
		}}$$
		
		We shall once again justify commutativity by describing how each of the two sides of this diagram act on a generic element.  We start with a pair of globular cells $(h,k)$, both with arity shape $\sigma$, such that the first is indexed by an operation $\phi \in P(n,m)$ and the second is indexed by on operation $\psi \in P(l,k)$.  Hence we may write $h$ as $\sigma_{\phi}$ and $k$ as $\sigma_{\psi}$ to get $(h,k)= (\sigma_{\phi},\sigma_{\psi}) \in (P(n,m)\cdot\Tone)\times(P(l,k)\cdot\Tone)$.  Moreover, $(\sigma_{\phi},\sigma_{\psi})$ is equipped with a coloring of its arity by cells in $A^n \times A^l$.  We now wish to look at two different compositions of maps to see that the corresponding diagram of morphisms commutes.
		
		We begin the first string by applying the middle four interchange to the cells described above.  This gives a pair whose first entry is $\sigma_{\phi}$ equipped with the coloring of its arity by the first $n$ cells in the coloring by cells in $A^n \times A^l$.  We shall denote this `word' coloring $\sigma_{\phi}$ by $\kappa_n \in \mathcal{T}(A^n)$. The second entry of the pair is then the cell $\sigma_{\psi}$ equipped with the coloring of its arity by the last $l$ cells in the coloring, which we shall denote $\kappa_l$.  Hence, the interchange transformation sends $((\sigma_{\phi}, \sigma_{\psi}),(\kappa_n, \kappa_l))$ to $((\sigma_{\phi},\kappa_n),(\sigma_{\psi},\kappa_l))$.  Since a cell of the cartesian product is a tuple of cells all having the same arity shape, we may re-index $((\sigma_{\phi},\kappa_n),(\sigma_{\psi},\kappa_l))$ as $((\sigma,\kappa_n)_{\phi},(\sigma,\kappa_l)_{\psi})$ without any loss of information.  Similarly, we can re-index this tuple as $((\sigma,\kappa_n),(\sigma,\kappa_l))_{(\phi,\psi)}$.
		
		If we instead follow the other composition, we first take $((\sigma_{\phi}, \sigma_{\psi}),(\kappa_n, \kappa_l))$ and, instead of applying the interchange transformation, re-index it as $((\sigma, \sigma)_{(\phi,\psi)},(\kappa_n, \kappa_l))$.  This can then also be re-indexed as $((\sigma, \sigma),(\kappa_n, \kappa_l)) _{(\phi,\psi)}$.  We can then apply the interchange morphism in just the $(\phi,\psi)$ summand to get $((\sigma,\kappa_n),(\sigma,\kappa_l))_{(\phi,\psi)}$ as we had before.
		
		\newpage
		\textbf{DIAGRAM B}
		
		$$\adjustbox{max width=\columnwidth}{\xymatrix{ & [P(n,m) \cdot (\Tone \mysquare A^n)] \times [P(l,k) \cdot (\Tone \mysquare A^l)] \ar[dr] \ar[ddl] & \\
				& & [P(n,m) \cdot (\Tone \mysquare A)^n] \times [P(l,k) \cdot (\Tone \mysquare A)^l] \ar[dd]^{[\id_{P(n,m)} \cdot \omega^n] \times [\id_{P(l,k)} \cdot \omega^l]} \ar[dddll] \\
				[P(n,m) \times P(l,k)] \cdot [(\Tone \mysquare A^n) \times (\Tone \mysquare A^l)] \ar[dd] & & \\
				& & [P(n,m) \cdot A^n] \times [P(l,k) \cdot A^l] \ar[dd] \\
				[P(n,m) \times P(l,k)] \cdot [(\Tone \mysquare A)^n \times (\Tone \mysquare A)^l] \ar[dd] \ar[drr]^{\id_{P(n,m) \times P(l,k)} \cdot [\omega^n \times \omega^l]} & & \\
				& & [P(n,m) \times P(l,k)] \cdot [A^n \times A^l] \ar[ddl] \\
				[P(n,m) \times P(l,k)] \cdot [\Tone \mysquare A]^{n+l} \ar[dr]^{\qquad\id_{P(n,m) \times P(l,k)} \cdot \omega^{n+l}} & & \\
				& [P(n,m) \times P(l,k)] \cdot A^{n+l} & \\
		}}$$
		
		The two topmost squares commute by the naturality of the operation of reindexing copwers.  The bottom square commutes by the naturality of the associator for $\times$ in $\Coll$.
		\\
		
		\textbf{DIAGRAM C}
		
		$$\adjustbox{max width=\columnwidth}{\xymatrix{ & & [P(n,m) \cdot A^n] \times [P(l,k) \cdot A^l] \ar[dl] \ar[dd]^{\nu_{n,m} \times \nu_{l,k}} \\
				& [P(n,m) \times P(l,k)] \cdot [A^n \times A^l] \ar[dl] & \\
				[P(n,m) \times P(l,k)] \cdot A^{n+l} \ar[dr]^{\qquad+^P_{n,m,l,k} \cdot \id_{A^{n+l}}} & & A^m \times A^k \ar[dd] \\
				& P(n+l,m+k) \cdot A^{n+l} \ar[dr]^{\nu_{n+l,m+k}} \ar[dr] & \\
				& & A^{m+k} \\
		}}$$
		
		This diagram commutes by the fact that $+$ is the monoidal product for the underlying set PRO $P$.
		
		\newpage
		\textbf{DIAGRAM D}
		
		$$\adjustbox{max width=\columnwidth}{\xymatrix{ & [(P(n,m) \times P(l,k)) \cdot (\Tone \times \Tone)] \mysquare [A^n \times A^l] \ar[ddl]^{\qquad\qquad\quad[\id_{P(n,m) \times P(l,k)} \cdot \phi] \mysquare \id_{A^n \times A^l}} \ar[dr] & \\
				& & [P(n,m) \times P(l,k)] \cdot [(\Tone \times \Tone) \mysquare (A^n \times A^l)] \ar[dd]^{\id_{P(n,m) \times P(l,k)} \cdot [\phi \mysquare \id_{A^n \times A^l}]} \\
				[(P(n,m) \times P(l,k)) \cdot \Tone] \mysquare [A^n \times A^l] \ar[dd] \ar[drr] & & \\
				& & [P(n,m) \times P(l,k)] \cdot [\Tone \mysquare (A^n \times A^l)] \ar[ddl] \\
				[(P(n,m) \times P(l,k)) \cdot \Tone] \mysquare A^{n+l} \ar[dr] & & \\
				& [P(n,m) \times P(l,k)] \cdot [\Tone \mysquare A^{n+l}] & 
		}}$$
		
		Both of these squares commute by the naturality of the operation of reindexing copowers.
		\\
		
		\textbf{DIAGRAM E}
		
		$$\adjustbox{max width=\columnwidth}{\xymatrix{ & [P(n,m) \times P(l,k)] \cdot [(\Tone \times \Tone) \mysquare (A^n \times A^l)] \ar[ddl] \ar[ddr] & \\
				\\
				[P(n,m) \times P(l,k)] \cdot [\Tone \mysquare (A^n \times A^l)] \ar[ddd] & & [P(n,m) \times P(l,k)] \cdot [(\Tone \mysquare A^n) \times (\Tone \mysquare A^l)] \ar[ddd] \\
				\\
				\\
				[P(n,m) \times P(l,k)] \cdot [\Tone \mysquare A^{n+l}] \ar[ddr] & & [P(n,m) \times P(l,k)] \cdot [(\Tone \mysquare A)^n \times (\Tone \mysquare A)^l] \ar[ddl] \\
				\\
				& [P(n,m) \times P(l,k)] \cdot [\Tone \mysquare A]^{n+l} & 
		}}$$
		
		Note that each object and arrow of this diagram is a copower indexed by $P(n,m) \times P(l,k)$.  By the functoriality of $\cdot$, it is hence enough to show that this diagram commutes prior to taking copowers.  To see this, note initially that the first map along the left hand side (which is a $\mysquare$-product of the unitor for $\times$ in $\Coll$ with an identity map) is invertible.  Note then that the operation of preserving cartesian power in the second variable, which is seen in this diagram in both the last map along the left hand side as well as the second to last along the right hand side, factors as series of inverse unitor maps for $\times$ followed by a series of applications of $\boxtimes$.  We hence see, after inverting the first map along the left hand side, that both sides of this diagram are simply a different choice in the order in which these iterated unitors and copies of $\boxtimes$ are applied.  Hence the diagram commutes by the coherence theorem for lax-monoidal functors.
		
		\newpage
		\textbf{DIAGRAM F}
		
		$$\adjustbox{max width=\columnwidth}{\xymatrix{ & [(P(n,m) \times P(l,k)) \cdot (\Tone \times \Tone)] \mysquare [A^n \times A^l] \ar[dr]^{\qquad\qquad[\id_{P(n,m) \times P(l,k)} \cdot \phi] \mysquare \id_{A^n \times A^l}} \ar[ddl]_{[+^P_{n,m,l,k} \cdot \id_{\Tone \times \Tone}] \mysquare \id_{A^n \times A^l}\qquad\qquad\quad} & \\
				& & [(P(n,m) \times P(l,k)) \cdot \Tone] \mysquare [A^n \times A^l] \ar[dd] \ar[dddll]_{[+^P_{n,m,l,k} \cdot \id_{\Tone}] \mysquare \id_{A^n \times A^l}\qquad\qquad} \\
				[P(n+l,m+k) \cdot (\Tone \times \Tone)] \mysquare [A^n \times A^l] \ar[dd]_{[\id_{P(n+l,m+k)} \cdot \phi] \mysquare \id_{A^n \times A^l}} & & \\
				& & [(P(n,m) \times P(l,k)) \cdot \Tone] \mysquare A^{n+l} \ar[dd] \ar[dddll]_{[+^P_{n,m,l,k} \cdot \id_{\Tone}] \mysquare \id_{A^{n+l}}\qquad\qquad} \\
				[P(n+l,m+k) \cdot \Tone] \mysquare [A^n \times A^l] \ar[dd] & & \\
				& & [P(n,m) \times P(l,k)] \cdot [\Tone \mysquare A^{n+l}] \ar[dd] \ar[dddll]_{+^P_{n,m,l,k} \cdot \id_{\Tone \mysquare A^{n+l}}\qquad\qquad} \\
				[P(n+l,m+k) \cdot \Tone] \mysquare A^{n+l} \ar[dd] & & \\
				& & [P(n,m) \times P(l,k)] \cdot [\Tone \mysquare A]^{n+l} \ar[dd]^{\id_{P(n,m) \times P(l,k)} \cdot \omega^{n+l}} \ar[dddll]_{+^P_{n,m,l,k} \cdot \id_{[\Tone \mysquare A]^{n+l}}\qquad\qquad} \\
				P(n+l,m+k) \cdot [\Tone \mysquare A^{n+l}] \ar[dd]_{} & & \\
				& & [P(n,m) \times P(l,k)] \cdot A^{n+l} \ar[ddl]^{\qquad+^P_{n,m,l,k} \cdot \id_{A^{n+l}}} \\
				P(n+l,m+k) \cdot [\Tone \mysquare A]^{n+l} \ar[dr]_{\id_{P(n+l,m+k)} \cdot \omega^{n+l}\qquad\qquad} & & \\
				& P(n+l,m+k) \cdot A^{n+l} & \\
		}}$$
		
		The topmost square commutes by the functoriality of $\cdot$.  The second square from the top commutes by the naturality of the associator for $\times$ in $\Coll$.  The middle square commutes by the naturality of the operation of reindexing copowers.  The bottom two squares also commute by the functoriality of $\cdot$.
		
		\newpage
		\textbf{UNIT IS REPRESENTED BY THE ACTION}
		
		$$\adjustbox{max width=\columnwidth}{\xymatrix{(P(n,n) \cdot \Tone) \mysquare A^n \ar[rrr] \ar[ddrr] & & & P(n,n) \cdot (\Tone \mysquare A^n) \ar[rr] & & P(n,n) \cdot (\Tone \mysquare A)^n \ar[rrr]^{\id_{P(n,n)} \cdot \omega^n} & & & P(n,n) \cdot A^n \ar[dddddddd]^{\nu_{n,n}} \\
				\\
				& & \Tone \mysquare (P(n,n) \cdot A^n) \ar[rrrr]^{\id_{\Tone} \mysquare \nu_{n,n}} & & & & \Tone \mysquare A^n \ar[dddd] & & \\
				\\
				(P(n,n) \cdot \one) \mysquare A^n \ar@{^{(}->}[uuuu] & & & & & & & & \\
				\\
				& & & & (\one \mysquare A)^n \ar@{^{(}->}[rr] & & (\Tone \mysquare A)^n \ar[ddrr]^{\omega^n} & & \\
				\\
				\one \mysquare A^n \ar[uuuu]^{\xi_n \mysquare \id_{A^n}} \ar@{^{(}->}[rrrrrruuuuuu] \ar[rrrruu] \ar[rrrrrrrr]^{\lambda^{\Coll_{\mysquare}}_{A^n}} & & & & & & & & A^n
		}}$$
		
		The top right region commutes by the fact that operations in $\mathcal{P}$ act on $A$ as strict $\omega$-functors.  The upper of the two middle regions commutes by the fact that the action in this region is by identities from the set PRO $P$.  The lower of the middle two regions commutes by the naturality of the iterated middle four interchange which sends each $\mysquare$ product with a cartesian power of a collection in the second variable to a canonical cartesian power of a $\mysquare$ product.  To see why the bottom region commutes, consider first the upper path of this region.  After an initial reindexing, this composition amounts to an action on an element of $\mathcal{T}(A^n)$ by a generating globular cell from $\one \subset \Tone$.  But since these cells act as identities with respect to the $\mysquare$ product, this is the same as simply applying $\lambda^{\Coll_{\mysquare}}_{A^n}$ to $\one \mysquare A^n$.
	\end{proof}

	\section{Contractions and Leinster Fibrations}
	    In Leinster's presentation of weak $\omega$-categories in \emph{Higher Operads, Higher Categories}\cite{leinster2004higher}, he proves the existence of an initial globular operad with contraction using a theorem of Kelly\cite{kelly_UnifiedTreat1980} which asserts that the strict pullback in $\Cat$ of two finitary and monadic functors, both of whose target is locally finitely presentable, is monadic.  In his construction, the two finitary and monadic functors are the underlying functors for the monads on $\Coll$ which have as algebras collections with contraction and globular operads respectively.  Hence, his pullback monad, which we shall denote $\mathfrak{O}$, has as algebras globular operads with contraction.  Applying $\mathfrak{O}$ to the initial object $\{\} \in \Coll$ constructs a collection $\mathfrak{O}(\{\})$ that, when thought of as an algebra for $\mathfrak{O}$ when equipped with the structure map ${\mu^{\mathfrak{O}}_{\{\}}:\mathfrak{O}^2(\{\}) \rightarrow \mathfrak{O}(\{\})}$ induced by the component at $\{\}$ of the multiplication transformation for $\mathfrak{O}$, is the initial free globular operad with contraction.   Algebras for the operad $\mathfrak{O}(\{\})$ are then by construction weak $\omega$-categories.
	    
	    We shall eventually use this same trick to construct something much like a globular PRO, whose algebras are by construction weak $\omega$-categorifications of a particular equational algebraic theory.  We do not yet have the machinery to construct such an object.  We need one more piece of structure: a special lifting property.  We begin by recalling Leinster's notion of a contraction structure on a collection\cite{leinster2004higher}.
	
	\begin{definition}
		Given a globular set $(X,s_X,t_X)$, two $n$-cells $\nu^-, \nu^+ \in X$ are \emph{parallel} if $s_X(\nu^-) = s_X(\nu^+)$ and $t_X(\nu^-) = t_X(\nu^+)$.  All zero dimensional cells in $X$ are parallel.
	\end{definition}
	
	Now, given a map $f:X \rightarrow Y$ of globular sets, for each nonzero $n$-cell $\nu \in Y_n$ we may consider the set
	$$\text{Par}_f(\nu) := \{(\rho^-,\rho^+) \in X_{n-1} \times X_{n-1} | \rho^- \text{ and } \rho^+ \text{ are parallel}, f(\rho^-) = s_Y^n(\nu), f(\rho^+) = t_Y^n(\nu) \}$$
	of pairs of parallel $(n-1)$-cells in $X$ that map via $f$ to the boundary of $\nu$ in $Y$.
	
	\begin{definition}
		Given a map $f:X \rightarrow Y$ of globular sets, a $\emph{contraction}$ $(f:X \rightarrow Y, \kappa^{f})$ on $f$ is a sequence of maps $\kappa^{f} = \{\kappa_{\nu}:\text{Par}_f(\nu) \rightarrow X_n \}$, indexed by the nonzero $n$-cell $\nu \in Y_n$, such that for each nonzero $\nu \in Y$
		$$s_X^n(\kappa_{\nu}(\rho^-,\rho^+)) = \rho^-$$
		$$t_X^n(\kappa_{\nu}(\rho^-,\rho^+)) = \rho^+$$
		$$f(\kappa_{\nu}(\rho^-,\rho^+)) = \nu$$
		for every pair $(\rho^-,\rho^+) \in \text{Par}_f(\nu)$.
	\end{definition}
	This definition may be weakened so that for any nonzero $n$-cell $\nu \in Y_n$ and any pair $(\rho^-,\rho^+) \in \text{Par}_f(\nu)$ we require only that there exists an $n$-cell $\kappa \in X_n$ such that $\rho^-$ and $\rho^+$ bound $\kappa$ in $X$ and $f(\kappa) = \nu$.  In other words, whenever we can lift the boundary of an $n$-cell in $Y$ we are furthermore able to lift the entire cell.
	
	\begin{definition}
		A \emph{Leinster fibration} is a globular set map $f:X \rightarrow Y$ which satisfies the property that for all $n \in \mathbb{N}$ and $\nu \in Y_n$, for each pair $(\rho^-,\rho^+) \in \text{Par}_f(\nu)$ there exists a cell $\gamma_{\nu}^{(\rho^-,\rho^+)} \in X_n$ such that
		$$s_X^n(\gamma_{\nu}^{(\rho^-,\rho^+)}) = \rho^-$$
		$$t_X^n(\gamma_{\nu}^{(\rho^-,\rho^+)}) = \rho^+$$
		$$f(\gamma_{\nu}^{(\rho^-,\rho^+)}) = \nu$$
	\end{definition}
	Note that in the presence of the axiom of choice, being a Leinster fibration is equivalent to the existence of a contraction structure on a globular set map.  A contraction structure is a choice of lifts for a Leinster fibration.  In this way, we may think of a contraction structure as a \emph{split Leinster fibration}.
	
	\begin{definition}
		A $\emph{contraction structure on a globular operad}$ $a: A \rightarrow \Tone$ is a contraction on the unique map from $a$ to the terminal collection $\Tone \rightarrow \Tone$.  In particular, it is a contraction on the arity map $a$.
	\end{definition}
	
	We shall now extend this construction to the theory of globular PROs.
	
	\begin{definition}
		A $\emph{contraction structure on a}$ $\mathbb{N}\Coll\emph{-graph homomorphism}$ is a map of $\mathbb{N}\Coll$-graphs $F:\mathbf{G} \rightarrow \mathbf{H}$ such that each component $F_{n,m}:\mathbf{G}(n,m) \rightarrow \mathbf{H}(n,m)$, all of which are maps of globular sets, comes equipped with a specified contraction.  We moreover call $\mathbf{G}$ a $\mathbb{N}\Coll$\emph{-graph with contraction over }$\mathbf{H}$ when equipped with such an $F$.  
	\end{definition}
	
	Most often when working with contractions we will fix the target.  Collectively, $\mathbb{N}\Coll$-graphs with contraction over the $\mathbb{N}\Coll$-graph $\mathbf{G}$ form a category $\Cont(\sfrac{\mathbb{N}\Coll\Graph}{\mathbf{G}})$ whose morphisms are those in $\sfrac{\mathbb{N}\Coll\Graph}{\mathbf{G}}$ which preserve the contraction structure on each hom-object component.
	
	Given any object in $\sfrac{\mathbb{N}\Coll\Graph}{\mathbf{G}}$ we can expand it to have a canonical contraction structure.  In appendix G of Leinster's book\cite{leinster2004higher}, he describes a functorial construction for giving a generic collection $p:P \rightarrow \Tone$ a canonical contraction structure
	$$(C(p):CP \rightarrow \Tone, \kappa^{C(p)})$$
	by inductively adjoining the requisite cells to $P$ at each dimension to get the new collection $C(p)$ which has a natural induced contraction $\kappa^{C(p)}$.  Moreover, Leinster's construction does not require that the object upon which we are adjoining a contraction structure be a collection.  There is an analogous construction which sends any globular set map to a globular set map with contraction.  This allows us to naturally extend this construction to $\mathbb{N}\Coll$-graphs.
	
	\begin{definition}
		Given a $\mathbb{N}\Coll$-graph $\mathbf{G}$, for any object $H: \mathbf{H} \rightarrow \mathbf{G}$ in $\sfrac{\mathbb{N}\Coll\Graph}{\mathbf{G}}$, the $\Coll$-$\emph{graph with freely generated contraction structure over}$ $\mathbf{G}$ $\emph{on}$ $H$, denoted $C_{\mathbf{G}}(H): C_{\mathbf{G}}\mathbf{H} \rightarrow \mathbf{G}$, is constructed by applying Leinster's free contraction construction to each of the globular set map components
		$$H_{n,m}:\mathbf{H}(n,m) \rightarrow \mathbf{G}(n,m)$$
		to make them globular sets with contraction
		$$\left(C_{\mathbf{G}}(H_{n,m}):C_{\mathbf{G}}\mathbf{H}(n,m) \rightarrow \mathbf{G}(n,m),\kappa^{C_{\mathbf{G}}(H_{n,m})}\right)$$
		which collectively induce upon $C_{\mathbf{G}}(H):C_{\mathbf{G}}\mathbf{H} \rightarrow \mathbf{G}$ the structure of a $\mathbb{N}\Coll$-graph with contraction over $\mathbf{G}$.
	\end{definition}
	
	\begin{theorem}\label{ContFinAndMod}
		Given a $\mathbb{N}\Coll$-graph $\mathbf{G}$, $C_{\mathbf{G}}: \sfrac{\mathbb{N}\Coll\Graph}{\mathbf{G}} \rightarrow \Cont(\sfrac{\mathbb{N}\Coll\Graph}{\mathbf{G}})$ has a right adjoint $R_{\mathbf{G}}: \Cont(\sfrac{\mathbb{N}\Coll\Graph}{\mathbf{G}}) \rightarrow \sfrac{\mathbb{N}\Coll\Graph}{\mathbf{G}}$ which is finitary and monadic.
	\end{theorem}
	\begin{proof}
		It is immediately clear that such a right adjoint exists.  Given any $\mathbb{N}\Coll$-graph with contraction in $\Cont(\sfrac{\mathbb{N}\Coll\Graph}{\mathbf{G}})$, $R_{\mathbf{G}}$ simply forgets the contraction with which each hom-object is equipped.  As Leinster showed \cite{leinster2004higher}, the right adjoint for his construction is both finitary and monadic over $\Coll$.  It is then clear by construction that these properties are preserved at the level of $R_{\mathbf{G}}$.
	\end{proof}
	
	We will use this construction on $\mathbb{N}\Coll$-graphs to create something like a globular PRO whose algebras are weak versions of the algebras for a chosen globular PRO.  We can fix a weakenable globular PRO $\mathcal{P}$ whose algebras we wish to weaken.  We then consider the category $\sfrac{\mathbb{N}\Coll\Graph}{U(\mathcal{P})}$ of $\Coll$-graphs with object set $\mathbb{N}$ sliced over the underlying $\Coll$-graph of the PRO whose algebras are the strict models of the theory we wish to weaken.  We then repeat the previous construction with $\mathbf{G} = U(\mathcal{P})$.

	\begin{definition}
		Given a globular PRO $\mathcal{P}$, for any object $G:\textbf{G} \rightarrow U(\mathcal{P})$ in $\sfrac{\mathbb{N}\Coll\Graph}{U(\mathcal{P})}$, the $\Coll$-$\emph{graph with freely generated contraction structure over}$ $U(\mathcal{P})$ $\emph{on}$ $G$, denoted $C_{\mathcal{P}}(G): C_{\mathcal{P}}\mathbf{G} \rightarrow U(\mathcal{P})$, is constructed by applying Leinster's free contraction construction to each of the globular set map components
		$$G_{n,m}:\mathbf{G}(n,m) \rightarrow U(\mathcal{P})(n,m)$$
		to make them globular sets with contraction
		$$\left(C_{\mathcal{P}}(G_{n,m}):C_{\mathcal{P}}\mathbf{G}(n,m) \rightarrow U(\mathcal{P})(n,m),\kappa^{C_{\mathcal{P}}(G_{n,m})}\right)$$
		which collectively induce upon $C_{\mathcal{P}}(G):C_{\mathcal{P}}\mathbf{G} \rightarrow U(\mathcal{P})$ the structure of a $\mathbb{N}\Coll$-graph with contraction over $U(\mathcal{P})$.
	\end{definition}

	\section{The Weakening Monad}
	With the functor $C_{\mathcal{P}}$ we are almost able to construct the monad which will be the key in constructing weak $\omega$-categorifications of a particular algebraic theory.  However, $C_{\mathcal{P}}$ only allows us to construct free contraction structures on $\Coll$-graphs over $U(\mathcal{P})$.  We will need to extend the corresponding monads for the underlying functors $\mathcal{W}: \Mon\mathbb{N}\mathcal{D}\Graph \rightarrow \mathbb{N}\mathcal{D}\Graph$ and $U: \Coll\Cat \rightarrow \Coll\Graph$ from above. First we will need the following theorem and corollary.
	
	\begin{theorem}
		Let $T: \mathcal{C} \rightarrow \mathcal{C}$ be a monad over $\mathcal{C}$ whose associated adjunction is $F \dashv U$, such that $T=UF$, with unit and counit $\eta: \id_{\mathcal{C}} \Rightarrow UF$ and $\epsilon: FU \Rightarrow \id_{\Talg}$ respectively, where $\Talg$ is the category of T-algebras over $\mathcal{C}$.  Then, given any T-algebra $A$, the induced functor $\overline{U}: \sfrac{\Talg}{A} \rightarrow \sfrac{\mathcal{C}}{U(A)}$ is monadic over $\sfrac{\mathcal{C}}{U(A)}$.
	\end{theorem}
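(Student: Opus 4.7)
The plan is to exhibit a left adjoint to $\overline{U}$ and then verify Beck's monadicity theorem in its creation-of-coequalizers form. Define $\overline{F}: \sfrac{\mathcal{C}}{U(A)} \to \sfrac{\Talg}{A}$ on objects by sending $(f: X \to U(A))$ to the T-algebra morphism $\epsilon_A \circ F(f): F(X) \to A$, and on morphisms by $F$. Functoriality of $F$ and naturality of $\epsilon$ make this well-defined on slice morphisms.

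First I would verify $\overline{F} \dashv \overline{U}$ by restricting the original adjunction. A map $\phi: F(X) \to Y$ in $\Talg$ satisfying $k \circ \phi = \epsilon_A \circ F(f)$ corresponds under the hom-bijection to $\widetilde{\phi} := U(\phi) \circ \eta_X: X \to U(Y)$; applying $U$ to the defining equation and using the triangle identity $U(\epsilon_A) \circ \eta_{U(A)} = \id_{U(A)}$ together with naturality of $\eta$ yields $U(k) \circ \widetilde{\phi} = f$, which is exactly the slice compatibility condition over $U(A)$. The inverse direction is entirely analogous, so the adjunction restricts cleanly to the slices.

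Second I would establish the two Beck conditions. Conservativity of $\overline{U}$ follows immediately from conservativity of $U$: if $\phi: (Y, k) \to (Z, m)$ in $\sfrac{\Talg}{A}$ has $\overline{U}(\phi) = U(\phi)$ an isomorphism in $\mathcal{C}$, then monadicity of $T$ makes $\phi$ an isomorphism in $\Talg$, and hence in $\sfrac{\Talg}{A}$. For creation of coequalizers of $\overline{U}$-split pairs, take a parallel pair $\phi, \psi: (Y, k) \rightrightarrows (Z, m)$ in $\sfrac{\Talg}{A}$ whose image under $\overline{U}$ is split. By monadicity of $T$, the underlying pair $\phi, \psi: Y \rightrightarrows Z$ in $\Talg$ admits a unique coequalizer $\tilde{q}: Z \to \tilde{W}$ in $\Talg$ whose image under $U$ is the chosen split coequalizer in $\mathcal{C}$. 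Since $m \circ \phi = k = m \circ \psi$, the universal property of $\tilde{q}$ produces a unique T-algebra map $\overline{m}: \tilde{W} \to A$ with $\overline{m} \circ \tilde{q} = m$, equipping $\tilde{W}$ with the requisite slice structure and making $\tilde{q}$ a morphism in $\sfrac{\Talg}{A}$.

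The main obstacle is verifying that $(\tilde{W}, \overline{m})$ together with $\tilde{q}$ really is the coequalizer in $\sfrac{\Talg}{A}$, not merely a cone. The verification is a short diagram chase: any competing cocone in $\sfrac{\Talg}{A}$ forgets to a cocone in $\Talg$ which factors uniquely through $\tilde{q}$, and the factorization lies over $A$ by the uniqueness clause of the universal property applied to maps into $A$. Preservation by $\overline{U}$ and the matching of slice structures on both sides then follow because colimits in slice categories are computed in the base with slice structure maps determined uniquely by the universal property. Thus Beck's theorem applies and $\overline{U}$ is monadic.
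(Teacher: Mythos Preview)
Your proof is correct. Both you and the paper construct the same left adjoint (the paper calls it $\mathcal{F}$) by sending $f:X\to U(A)$ to $\epsilon_A\circ F(f)$, and both verify the adjunction by restricting the original hom-set bijection and invoking the triangle identity $U(\epsilon_A)\circ\eta_{U(A)}=\id_{U(A)}$ together with naturality of $\eta$. The divergence is in the monadicity step. You invoke Beck's theorem, reducing conservativity and creation of $\overline{U}$-split coequalizers to the corresponding properties of $U$ and then inducing the slice structure on the coequalizer via its universal property. The paper instead bypasses Beck entirely: it writes out the comparison functor $K^{\mathfrak{T}}:\sfrac{\Talg}{A}\to(\sfrac{\mathcal{C}}{U(A)})^{\mathfrak{T}}$ explicitly and observes that a $\mathfrak{T}$-algebra structure on $x:X\to U(A)$ amounts precisely to a $T$-algebra structure on $X$ making $x$ a $T$-algebra map, so the comparison functor is an identity up to this identification. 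Your route is the standard modular one and is arguably tighter, since the paper's direct argument leans on a slightly informal identification of the two algebra categories; on the other hand, the paper's approach is self-contained and exploits the fact that the induced monad $\mathfrak{T}=\overline{U}\,\overline{F}$ has a completely transparent description in this case.
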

	\begin{proof}
		Consider the functor $\mathcal{F}: \sfrac{\mathcal{C}}{U(A)} \rightarrow \sfrac{\Talg}{A}$ which is defined on objects as
		$$\mathcal{F}(x:X \rightarrow U(A)) := \epsilon_A(F(x)): F(X) \rightarrow FU(A) \rightarrow A$$
		and sends a morphism
		$$\xymatrix{X \ar[rr]^{f} \ar[dr]_{x} & & Y \ar[dl]^{y} \\
			& U(A) & 
		}$$
		in $\sfrac{\mathcal{C}}{U(A)}$ to the morphism
		$$\xymatrix{F(X) \ar[rrrr]^{\mathcal{F}(f)} \ar[d]_{F(x)} & & & & F(Y) \ar[d]^{F(y)} \\
			FU(A) \ar[drr]_{\epsilon_A} & & & & FU(A) \ar[dll]^{\epsilon_A} \\
			& & A & & 
		}$$
		in $\sfrac{\Talg}{A}$.  We shall show that $\mathcal{F} \dashv \overline{U}$ by checking that there is a natural isomorphism $\Phi$ between the appropriate hom-sets.  Let $n: N \rightarrow U(A)$ be any object in $\sfrac{\mathcal{C}}{U(A)}$ and $m:M \rightarrow A$ be any object in $\sfrac{\Talg}{A}$.  We first need to show that any morphism $f: \mathcal{F}(n) \rightarrow m$ can be identified with a morphism $\Phi(f): n \rightarrow \overline{U}(m)$.  Consider the following diagram obtained by precomposing $\overline{U}(f)$ with $\eta_N$.
		$$\xymatrix{N \ar[rrr]^{\eta_N} \ar[ddddrrr]_{n} & & & UF(N) \ar[dd]^{UF(n)} \ar[rrr]^{\overline{U}(f)} & & & U(M) \ar[ddddlll]^{U(m)} \\
			\\
			& & & UFU(A) \ar[dd]^{U(\epsilon_A)} & & & \\
			\\
			& & & U(A) & & &
		}$$
		Note that this diagram represents a morphism $N \rightarrow U(A)$ in $\sfrac{\mathcal{C}}{U(A)}$ which is our desired candidate for $\Phi(f)$.  It remains then to show that this diagram commutes.  Immediately the right square commutes by construction as the image of a morphism under a functor.  To see why the left square commutes consider the refinement of the previous diagram
		$$\xymatrix{N \ar[rrr]^{\eta_N} \ar[dddd]_{n} & & & UF(N) \ar[dd]^{UF(n)} \ar[rrr]^{\overline{U}(f)} & & & U(M) \ar[ddddlll]^{U(m)} \\
			\\
			& & & UFU(A) \ar[dd]^{U(\epsilon_A)} & & & \\
			\\
			U(A) \ar[rrr]^{\id_{U(A)}} \ar[uurrr]^{\eta_{U(A)}} & & & U(A) & & &
		}$$
		where the top square commutes by the naturality of $\eta$ and the bottom triangle commutes by the unit/counit relations satisfied by $F$ and $U$ as adjoint functors.  Dually, given a morphism $g:n \rightarrow \overline{U}(m)$ we wish to find a morphism $\Phi^{-1}(g): \mathcal{F}(n) \rightarrow m$.  Consider the diagram obtained by composing $\mathcal{F}$ with $\epsilon_M$.
		$$\xymatrix{F(N) \ar[rrr]^{\mathcal{F}(g)} \ar[ddr]_{F(n)} & & & FU(M) \ar[dd]^{FU(m)} \ar[rrr]^{\epsilon_M} & & & M \ar[ddddlll]^{m} \\
			\\
			& FU(A) \ar[ddrr]_{\epsilon_A} & & FU(A) \ar[dd]^{\epsilon_A} & & & \\
			\\
			& & & A & & &
		}$$
		Here the commutativity of the left pentagon follows by construction as the image of a functor and the right square follows by the naturality of $\epsilon$.  Hence this diagram represents the desired $\Phi^{-1}(g): \mathcal{F}(n) \rightarrow m$ showing that the two homsets are isomorphic.  Moreover, this isomorphism is natural in both variables by construction, as the relevant commutative naturality squares can each be seen as restrictions of the corresponding naturality square of the original adjunction.
		
		Now let $\mathfrak{T} := \overline{U}\mathcal{F}$ and consider the comparison functor $K^{\mathfrak{T}}: \sfrac{\Talg}{A} \rightarrow (\sfrac{\mathcal{C}}{U(A)})^{\mathfrak{T}}$.  We must show that it is an equivalence of categories. To see this we shall here think of $T$-algebras as a pair $(A, T(A) \rightarrow A)$ consisting of the underlying object of the algebra $A$ together with the structure map $T(A) \rightarrow A$.  From this perspective the functor $U$ simply forgets the associated structure map for the pair.  We can then consider a generic element $x^{T}:(X, T(X) \rightarrow X) \rightarrow (A, T(A) \rightarrow A)$ from $\sfrac{\Talg}{A}$ whose image under $K^{\mathfrak{T}}$ is given by the pair $(\overline{U}(x^{T}), \overline{U}(\epsilon_{x^T}^{\mathfrak{T}}))$ where $\epsilon_{x^T}^{\mathfrak{T}}: \mathcal{F}\overline{U}(x^T) \rightarrow x^T$ is the counit for the $\mathcal{F} \vdash \overline{U}$ adjunction.  Hence we have the following:
		$$K^{\mathfrak{T}}(x^T) = \left(\vcenter{\xymatrix{X \ar[d] \\
				A}}, \vcenter{\xymatrix{T(X) \ar[rrr] \ar[d] & & & X \ar[ddll] \\
				T(A) \ar[dr] \\
				& A}}\right)$$
		Consider now the following $\mathfrak{T}$-algebra in $(\sfrac{\mathcal{C}}{U(A)})^{\mathfrak{T}}$:
		$$\left( \vcenter{\xymatrix{Y \ar[d] \\
				A}}, \mathfrak{T}\left(\vcenter{\xymatrix{Y \ar[d] \\
				A}} \right) \rightarrow \vcenter{\xymatrix{Y \ar[d] \\
				A}} \right)
		= \left( \vcenter{\xymatrix{Y \ar[d] \\
				A}}, \vcenter{\xymatrix{T(Y) \ar[rrr] \ar[d] & & & Y \ar[ddll] \\
				T(A) \ar[dr] \\
				& A}} \right)$$
		Together these show that any $\mathfrak{T}$-algebra is the image of a $T$-algebra under $K^{\mathfrak{T}}$ and moreover this functor is in fact the identity functor on $\sfrac{\Talg}{A}$.  Hence $K^{\mathfrak{T}}$ is an equivalence of categories.  Therefore $\mathcal{F} \dashv \overline{U}$ is a monadic adjunction.
	\end{proof}
	
	\begin{corollary}\label{SliceFinAndMod}
		The induced functors $\overline{\mathcal{W}}_{\mathcal{P}}:\sfrac{\Mon\mathbb{N}\Coll\Graph}{U(\mathcal{P})} \rightarrow \sfrac{\mathbb{N}\Coll\Graph}{U(\mathcal{P})}$ and ${\overline{U}_{\mathcal{P}}: \sfrac{\mathbb{N}\Coll\Cat}{\mathcal{P}} \rightarrow \sfrac{\mathbb{N}\Coll\Graph}{U(\mathcal{P})}}$ for some globular PRO $\mathcal{P}$, where $\mathcal{W}$ and $U$ are the underlying functors for the free monoid and free $\Coll$-category monads defined above, are finitary and monadic over $\sfrac{\mathbb{N}\Coll\Graph}{U(\mathcal{P})}$.
	\end{corollary}
	\begin{proof}
		That these two functors are monadic follows immediately from the previous theorem.  Moreover, since the forgetful functor from a slice categories to the original category preserves and creates colimits, it follows that since $\mathcal{W}$ and $U$ are finitary, as shown in theorem \ref{MonoidalFinAndMod} and \ref{UnderCatFinAndMod} respectively, then so are $\overline{\mathcal{W}}_{\mathcal{P}}$ and $\overline{U}_{\mathcal{P}}$.
	\end{proof}
	
	To complete our construction we will also need that the target category $\sfrac{\mathbb{N}\Coll\Graph}{U(\mathcal{P})}$ for each of our three finitary and monadic underlying functors is locally finitely presentable.
	
	\begin{lemma}
		The category $\mathbb{N}\Coll\Graph$ is locally finitely presentable.
	\end{lemma}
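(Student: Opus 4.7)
The plan is to reduce the statement to the locally finite presentability of $\Coll$ itself, which in turn is an immediate consequence of $\Coll$ being a presheaf topos on a small category, and then to invoke the standard closure properties of locally finitely presentable categories.

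First, I would use the description of $\mathbb{N}\Coll\Graph$ given immediately before the lemma, where the author identifies it with $\BiGrd\Coll$. A $\Coll$-graph with vertex set $\mathbb{N}$ is precisely an assignment of a hom-object $G(n,m) \in \Coll$ for each pair $(n,m) \in \mathbb{N}\times\mathbb{N}$, and a morphism of such graphs (fixing the identity on objects, as the bi-graded identification requires) is exactly a family of $\Coll$-morphisms indexed by such pairs. Thus one has an equivalence
\[ \mathbb{N}\Coll\Graph \;\simeq\; \BiGrd\Coll \;\simeq\; [\mathbb{N}\times\mathbb{N},\Coll] \]
where $\mathbb{N}\times\mathbb{N}$ is regarded as a small discrete category; equivalently, this is a countable product of copies of $\Coll$.

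Next I would verify that $\Coll$ is locally finitely presentable. By definition $\Coll = \Glob/\Tone$ is a slice of the presheaf topos $\Glob = [\mathbb{G}^{\mathrm{op}},\Set]$. It is a classical fact that for any small category $\mathcal{C}$ and any presheaf $F \in [\mathcal{C}^{\mathrm{op}},\Set]$, there is a canonical equivalence
\[ [\mathcal{C}^{\mathrm{op}},\Set]/F \;\simeq\; [\mathrm{el}(F)^{\mathrm{op}},\Set] \]
where $\mathrm{el}(F)$ denotes the category of elements of $F$. Applying this with $\mathcal{C} = \mathbb{G}$ and $F = \Tone$ exhibits $\Coll$ as the presheaf category on the small category $\mathrm{el}(\Tone)^{\mathrm{op}}$. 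Every presheaf category on a small category is locally finitely presentable, with the finitely presentable objects being precisely the finite colimits of representables, so $\Coll$ is locally finitely presentable.

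Finally I would invoke the closure of the class of locally finitely presentable categories under small products (equivalently, under functor categories indexed by small discrete categories): if $\mathcal{A}$ is locally finitely presentable and $J$ is a set, then $\mathcal{A}^{J}$ is locally finitely presentable, with finitely presentable objects the families supported on finitely many indices by finitely presentable objects of $\mathcal{A}$. Applying this to $\mathcal{A} = \Coll$ and $J = \mathbb{N}\times\mathbb{N}$ gives that $[\mathbb{N}\times\mathbb{N},\Coll] \simeq \mathbb{N}\Coll\Graph$ is locally finitely presentable, as required. The only real point requiring care is the initial bookkeeping identification of $\mathbb{N}\Coll\Graph$ as the functor category; but this is already handled in the discussion of $\BiGrd\mathcal{D}$ preceding the lemma, so after making that identification explicit the remainder of the argument is a direct appeal to well-known stability properties of locally finitely presentable categories.
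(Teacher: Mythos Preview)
Your proof is correct and follows essentially the same route as the paper: both identify $\mathbb{N}\Coll\Graph \simeq \prod_{\mathbb{N}\times\mathbb{N}}\Coll$ and use the category-of-elements equivalence to exhibit $\Coll$ as a presheaf category. The only cosmetic difference is that the paper pushes the product inside to write $\mathbb{N}\Coll\Graph$ itself as a single presheaf category $\Set^{(Elt(\Tone)\times\mathbb{N}\times\mathbb{N})^{\mathrm{op}}}$ before invoking ``presheaf categories are LFP,'' whereas you stop at ``$\Coll$ is LFP'' and then appeal to closure of LFP under small products; these are equivalent one-line moves.
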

	\begin{proof}
		Recall that $\mathbb{N}\Coll\Graph$ is equivalent to the category $\BiGrd\Coll$.  Moreover, we can think of each bi-graded collection as a countable product of ordinary collections over $\mathbb{N} \times \mathbb{N}$.  Hence we can express $\BiGrd\Coll$ as
		$$\BiGrd\Coll \cong \prod_{\mathbb{N} \times \mathbb{N}}\Coll \cong \prod_{\mathbb{N} \times \mathbb{N}} \sfrac{[\mathbb{G}^{op},\Set]}{\Tone} \cong \prod_{\mathbb{N} \times \mathbb{N}} \Set^{Elt(\Tone)^{op}} \cong$$
		$$\left(\Set^{Elt(\Tone)^{op}}\right)^{\mathbb{N} \times \mathbb{N}} \cong \Set^{(Elt(\Tone)^{op}) \times (\mathbb{N} \times \mathbb{N})} \cong \Set^{(Elt(\Tone) \times \mathbb{N} \times \mathbb{N})^{op}}$$
		where $Elt(\Tone)$ is the category of elements for the (covariant) presheaf functor $\Tone: \mathbb{G}^{op} \rightarrow \Set$.  We are here using it in order to perform the standard construction for writing a slice presheaf category as a presheaf category.  Also note that in the functor category $\left(\Set^{Elt(\Tone)^{op}}\right)^{\mathbb{N} \times \mathbb{N}}$, the object $\mathbb{N}$ is being thought of as the descrete category with object set $\mathbb{N}$.  This shows that $\BiGrd\Coll$, and hence $\mathbb{N}\Coll\Graph$, is a presheaf category.  The conclusion then follows from the fact that presheaf categories are locally finitely presentable \cite{borceux_1994}.
	\end{proof}
	
	\begin{corollary} \label{LocalFinPresen}
		Given a globular PRO $\mathcal{P}$, the category $\sfrac{\mathbb{N}\Coll\Graph}{U(\mathcal{P})}$, where the functor $U:\mathbb{N}\Coll\Cat \rightarrow \mathbb{N}\Coll\Graph$ is the underlying functor for the free $\Coll$-category adjunction above, is locally finitely presentable.
	\end{corollary}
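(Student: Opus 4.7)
The plan is to mimic the argument used for Lemma~18.1 and show directly that $\sfrac{\mathbb{N}\Coll\Graph}{U(P)}$ is itself a presheaf category, from which local finite presentability will follow immediately by the standard result cited from Borceux.

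First I would recall from Lemma~18.1 the explicit presheaf presentation
$$\mathbb{N}\Coll\Graph \cong \Set^{(\mathrm{Elt}(\Tone) \times \mathbb{N} \times \mathbb{N})^{op}}.$$
Writing $\mathcal{I} := (\mathrm{Elt}(\Tone) \times \mathbb{N} \times \mathbb{N})^{op}$ for the indexing category, I can then view $U(P)$ as a presheaf $U(P): \mathcal{I} \rightarrow \Set$. The key step is to invoke the well-known fact (see for instance \cite{maclane1994sheaves}) that for any small category $\mathcal{I}$ and any presheaf $F \in \Set^{\mathcal{I}}$, the slice topos $\Set^{\mathcal{I}}/F$ is canonically equivalent to the presheaf category on the category of elements of $F$:
$$\Set^{\mathcal{I}}/F \cong \Set^{\mathrm{Elt}(F)^{op}}.$$
Applying this to $F = U(P)$ yields
$$\sfrac{\mathbb{N}\Coll\Graph}{U(P)} \cong \Set^{\mathrm{Elt}(U(P))^{op}},$$
exhibiting the slice category as a presheaf category on a small category.

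Once this identification is in place, the conclusion follows immediately: presheaf categories on small categories are locally finitely presentable, exactly as cited at the end of the proof of Lemma~18.1 via Borceux\cite{borceux_1994}. The only potential obstacle is bookkeeping, namely ensuring that $\mathrm{Elt}(U(P))$ is genuinely a small category. But this is clear: the category of elements of any $\Set$-valued presheaf on a small base is small, since its objects form a set (a disjoint union over objects of $\mathcal{I}$ of the sets $U(P)(i)$) and its morphisms are likewise set-indexed. Thus the whole argument reduces to a routine application of two standard facts (slices of presheaf categories and local finite presentability of presheaf categories) once the presheaf presentation of $\mathbb{N}\Coll\Graph$ established in Lemma~18.1 is recalled.
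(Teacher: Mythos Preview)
Your proposal is correct and follows essentially the same approach as the paper: both recall from Lemma~18.1 that $\mathbb{N}\Coll\Graph$ is a presheaf category, invoke the fact that a slice of a presheaf category is again a presheaf category, and conclude local finite presentability via Borceux. You are simply more explicit, spelling out the category-of-elements equivalence and the smallness check, whereas the paper compresses this into a single sentence.
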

	\begin{proof}
		The claim follows from the fact that $\sfrac{\mathbb{N}\Coll\Graph}{U(\mathcal{P})}$ is the slice of a presheaf category since slices of presheaf categories are themselves presheaf categories.
	\end{proof}
	
	\begin{theorem}
		Given a fixed globular PRO $\mathcal{P}$, the pullback of the monads corresponding to $R_{U(\mathcal{P})}, \overline{\mathcal{W}}_{U(\mathcal{P})},$ and $\overline{U}_{\mathcal{P}}$ is a monad over $\sfrac{\mathbb{N}\Coll\Graph}{U(\mathcal{P})}$.
	\end{theorem}
	\begin{proof}
		By Theorem \ref{ContFinAndMod} and corollary \ref{SliceFinAndMod} above, we know that $R_{U(\mathcal{P})}, \overline{\mathcal{W}}_{U(\mathcal{P})},$ and $\overline{U}_{\mathcal{P}}$ are finitary and monadic.  By corollary \ref{LocalFinPresen} we know that $\sfrac{\mathbb{N}\Coll\Graph}{U(\mathcal{P})}$ is locally finitely presentable.  By applying Kelly's theorem $\cite{kelly_UnifiedTreat1980}$, we can construct the monadic pullback functor we desire by forming a pullback cube, each face of which is a pullback square.
	\end{proof}
	
	\begin{definition}
	    For a globular PRO $\mathcal{P}$, we call the monad from the previous theorem the \emph{weakening monad for} $\mathcal{P}$.  We denote this pullback monad $$\mathfrak{G}_{\mathcal{P}}: \sfrac{\mathbb{N}\Coll\Graph}{U(\mathcal{P})} \rightarrow \sfrac{\mathbb{N}\Coll\Graph}{U(\mathcal{P})}$$ Its algebras are by definition $\mathbb{N}\Coll$-graphs with contraction over $U(\mathcal{P})$ that also have the structure of being a $\Coll$-category and a monoid in $\mathbb{N}\Coll\Graph$.
	\end{definition}
	
	Note that because these monads are constructed via a pullback, the algebras for this monad ARE NOT globular PROs.  In particular, there are no axioms/relations required of the algebras for $\mathfrak{G}_{\mathcal{P}}$ which make the monoidal product in each algebra functorial.  For this reason, we will give the algebras for these monads a special name.
	
	\begin{definition}
	    Given any globular PRO $\mathcal{P}$, we call an algebra for the pullback monad $\mathfrak{G}_{\mathcal{P}}$ a \emph{weakening} of the PRO $\mathcal{P}$.
	\end{definition}
	
	\begin{definition}
	    Given a globular PRO $\mathcal{P}$, let $\mathcal{E}$ be a weakening of the globular PRO $\mathcal{P}$.  An algebra for $\mathcal{E}$ is given by a degenerate collection $a:A \rightarrow \Tone$ together with, for all $n,m \in \mathbb{N}$, a family of collection homomorphisms $\Omega_{n,m}:\mathcal{E}(n,m)\mysquare A^n \rightarrow A^m$ which makes commute the analogous diagrams to that for an algebra for a globular PRO.
	\end{definition}
	
	We will apply $\mathfrak{G}_{\mathcal{P}}$ to the initial object in $\sfrac{\mathbb{N}\Coll\Graph}{U(\mathcal{P})}$ to construct an $\mathbb{N}\Coll$-graph with contraction over $U(\mathcal{P})$ that, when viewed as an algebra for $\mathfrak{G}_{\mathcal{P}}$, is the initial free weakening of $\mathcal{P}$.  As we will see in the next section, when $\mathcal{P}$ is the globularization of a classical PRO $P$, the algebras for the initial free weakening will be $\omega$-categorifications of the theory encoded by $P$. However, although this constructions can be performed with any globular PRO, it is less clear what the significance is of this process when applied to those PROs which are not weakenable.
	
	Note that the lack of functoriality for addition in a weakening of a globular PRO is a virtue of this construction.  Remember that our goal is to create weak $\omega$-categorifications of an algebraic theory which can be presented by an ordinary PRO $P$.  And these desired $\omega$-categorifications are the algebras for the initial weakening of the globularization of $P$.  Note that, for reasons analogous to Theorem \ref{PullbackAlg}, an algebra for any weakening is necessarily an algebra for the initial weakening.  And there the relations that would need to hold for the weakened PRO to have a monoidal product functor are now replaced with cells in the weakening that make + a monoidal functor only up to equivalence in the $\omega$-categorical sense.

	\section{Weak Invertibility}
	
	When we apply $\mathfrak{G}_{\mathcal{P}}$ to construct a weakening of an object in $\sfrac{\mathbb{N}\Coll\Graph}{U(\mathcal{P})}$, where $\mathcal{P}$ is the globularization of an ordinary PRO, we will look at the algebras for these weakenings.   These algebras are globular sets with the structure of a weak $\omega$-category.  This is precisely because, when $\mathcal{P}$ is the globularization of a classical PRO $P$, the new contraction cells added to our weakened globular PRO have the effect of transforming relations in the theory captured by $P$ expressed by equations in $P$ with appropriate equivalences in the algebras for the weakening of $\mathcal{P}$.  We shall now make this precise, in a manner similar to the presentation given by Cheng\cite{Cheng_2007DualsGroupoid}.
	
	\begin{definition}
		    Let $\mathcal{C}$ be an $\omega$-category with $\mathcal{C}(k)$ the set of all $k$-cells in $\mathcal{C}$.  A $k$-cell $f$ in $\mathcal{C}$ is \emph{pseudo-invertible} if it can be equipped with a set of `witnesses' $W=\coprod\limits_{j \geq 0}W(j)$ such that
		        \begin{enumerate}
		            \item $W(j) \subset \mathcal{C}(k+j)$
		            \item $f \in W(0)$
		            \item For all $g \in W(j)$ whose source and target $(k+j-1)$-cells are $X$ and $Y$ respectively, there exists a cell $g' \in W(j)$ whose source and target $(k+j-1)$-cells are $Y$ and $X$ respectively.  In other words, for any cell \newline
		            $\begin{tikzcd} 
                      	\cdot 
                    	\arrow[rr, bend left=40, "X", ""{name=A}]
                		\arrow[rr, bend right=40, "Y", swap, ""{name=B}]
                    	&		& 
                    	\cdot
                		\arrow[Rightarrow, "g", shorten <=2pt, shorten >=2pt, swap, from=A, to=B]
                      \end{tikzcd} \in W(j)$ there exists a cell
                    $\begin{tikzcd} 
	                	\cdot 
                		\arrow[rr, bend left=40, "Y", ""{name=A}]
                		\arrow[rr, bend right=40, "X", swap, ""{name=B}]
                		&		& 
                		\cdot
                		\arrow[Rightarrow, "g'", shorten <=2pt, shorten >=2pt, swap, from=A, to=B]
                    \end{tikzcd} \in W(j)$.
		            \item Let $\id_X$ denote the identity $k$-cell on a $(k-1)$-cell $X$. For all $g \in W(j)$ there exists two $(k+j+1)$-cells $i_g, e_g \in W(j+1)$ with source and target $(k+j-1)$-cells as depicted below:
		            $$\begin{tikzcd} 
                      	\cdot 
                    	\arrow[rr, bend left=40, "\id_X", ""{name=A}]
                		\arrow[rr, bend right=40, "g'g", swap, ""{name=B}]
                    	&		& 
                    	\cdot
                		\arrow[Rightarrow, "i_g", shorten <=2pt, shorten >=2pt, swap, from=A, to=B]
                      \end{tikzcd} \hspace{1in}
                    \begin{tikzcd} 
	                	\cdot 
                		\arrow[rr, bend left=40, "gg'", ""{name=A}]
                		\arrow[rr, bend right=40, "\id_Y", swap, ""{name=B}]
                		&		& 
                		\cdot
                		\arrow[Rightarrow, "e_g", shorten <=2pt, shorten >=2pt, swap, from=A, to=B]
                    \end{tikzcd}$$
                    in $W(j)$
		        \end{enumerate}
            The \emph{pseudo-inverse} of $f$ is the witness $k$-cell $f' \in W(0)$ required by the third condition on $W$ from above.
	\end{definition}
		
	The above definition can intuitively be thought of as stating that a $k$-cell $f$ is pseudo-invertible with pseudo-inverse $f'$ if there exist pseudo-invertible $(k+1)$-cells $i_f$ and $e_f$ as described above.  In fact, this intuitive definition can be made fully rigorous using corecursion \cite{Moss_97Corecursion} (the corecursion is necessitated by the fact that, as stated, this intuitive definition requires the existence of other pseudo-invertible cells of higher dimension, seemingly presupposing the original definition).
	
	\begin{definition}
	Two $k$-cells are \emph{weakly equivalent} if there exists a pseudo-invertible cell between them.
	\end{definition}
	
	We will need this notion for the following theorem.

	\begin{theorem}
	Let $\mathcal{P}$ be the globularization of a classical PRO $P$, whose initial weakening is $\mathfrak{P}$.  Let $\mathcal{A}$ be an algebra for $\mathfrak{P}$.  If a pair of cells in $\mathfrak{P}(m,n)$ which sit over the same cell in $U(\mathcal{P})$ act on the same globular word in $A^m$, the two resulting cells in $A^n$ must be weakly equivalent in $A^n$.  In particular, lifts of operations in the original theory that gave equal results now give weakly equivalent results.
	\end{theorem}
	
	\begin{proof}
	Note that the operations in $\mathfrak{P}$ that sit over the same $k$-cell in $U(\mathcal{P})$ are parallel cells in $\mathfrak{P}$ that map to the boundary of a $(k+1)$-cell in $U(\mathcal{P})$, namely the identity on the cell in $U(\mathcal{P})$ over which both operations sit.  By the contraction structure over $U(\mathcal{P})$, this ensures that there is a $(k+1)$-cell in $\mathfrak{P}$ between the two operations corresponding to the two $k$-cells in $\mathcal{A}$.  Moreover, this $(k+1)$-cell is weakly invertible since it is the lift of an identity cell in $U(\mathcal{P})$, as a similar lifting argument ensures the existence of necessary higher dimensional cells for this $(k+1)$-cell to be weakly invertible in $\mathcal{P}$.  The actions of this $(k+1)$-invertible cell and all of its witness on a globular word in $A^m$ produce the weak equivalence between the cells resulting from application of the two initially mentioned operations. 
	\end{proof}
	
	With this theorem, we now have the following desired result.  Let $\mathcal{P}$ be the globularization of a classical PRO $P$, whose initial weakening is $\mathfrak{P}$.  Let $\mathcal{A}$ be an algebra for $\mathfrak{P}$.  Any two cells in $\mathcal{A}$ corresponding to operation cells in $\mathfrak{P}$ constructed from cells in $U(\mathcal{P})$, connected via $+$ and pasting composition, whose lifts in $\mathfrak{P}$ sit over the same cell in $U(\mathcal{P})$, are weakly equivalent in $\mathcal{A}$.  This follows from the previous lemma together with the fact that all possible contractions cells for operations in $\mathfrak{P}$ connected via $+$ and pasting composition must exist in $\mathfrak{P}$ via its construction as a pullback.
	
\nocite{Lawvere_2004}
\nocite{leinster2000operads}
\bibliographystyle{abbrv}
\bibliography{OmegaCatBib}

\begin{thebibliography}{10}

\bibitem{batanin_1998GlobCat}
M.~Batanin.
\newblock Monoidal globular categories as a natural environment for the theory
  of weak n-categories.
\newblock {\em Advances in Mathematics}, 136(1):39--103, 6 1998.

\bibitem{CentEnrMonCat:Batanin-Markl:2012}
M.~Batanin and M.~Markl.
\newblock Centers and homotopy centers in enriched monoidal categories.
\newblock {\em Advances in Mathematics}, 230:1811--1858, 2012.

\bibitem{borceux_1994}
F.~Borceux.
\newblock {\em Handbook of Categorical Algebra}, volume~2 of {\em Encyclopedia
  of Mathematics and its Applications}.
\newblock Cambridge University Press, 1994.

\bibitem{Cheng_2007DualsGroupoid}
E.~Cheng.
\newblock An $\omega$-category with all duals is an $\omega$-groupoid.
\newblock {\em Applied Categorical Structures}, 15:439--453, 2007.

\bibitem{crane_1995Clock}
L.~Crane.
\newblock Clock and category: Is quantum gravity algebraic?
\newblock {\em Journal of Mathematical Physics}, 36(11):6180--6193, 1995.

\bibitem{crane_Frenkel_1994TopQuantField}
L.~Crane and I.~Frenkel.
\newblock Four‐dimensional topological quantum field theory, hopf categories,
  and the canonical bases.
\newblock {\em Journal of Mathematical Physics}, 35(10):5136--5154, 1994.

\bibitem{kelly_UnifiedTreat1980}
G.~Kelly.
\newblock A unified treatment of transfinite constructions for free algebras,
  free monoids, colimits, associated sheaves, and so on.
\newblock {\em Bulletin of the Australian Mathematical Society}, 22(1):1–83,
  1980.

\bibitem{BasicConEnCatTheory:Kelly:2005}
G.~Kelly.
\newblock {\em Basic Concepts of Enriched Category Theory}.
\newblock Number~10 in Reprints in Theory and Applications of Categories.
  Cambridge University Press, 2005.

\bibitem{Lawvere_2004}
W.~Lawvere.
\newblock Semantics of algebraic theories.
\newblock {\em Reprints in Theory and Applications of Categories}, (5):1--121,
  2004.

\bibitem{leinster2000operads}
T.~Leinster.
\newblock Operads in higher-dimensional category theory, 2000.

\bibitem{leinster2004higher}
T.~Leinster.
\newblock {\em Higher Operads, Higher Categories}.
\newblock London Mathematical Society Lecture Notes Series. Cambridge
  University Press, 2004.

\bibitem{catalgebra:maclane1965}
S.~MacLane.
\newblock Categorical algebra.
\newblock {\em Bull. Amer. Math. Soc.}, 71(1):40--106, 01 1965.

\bibitem{maclane1994sheaves}
S.~MacLane and I.~Moerdijk.
\newblock {\em Sheaves in Geometry and Logic: A First Introduction to Topos
  Theory}.
\newblock Universitext. Springer New York, 1994.

\bibitem{Moss_97Corecursion}
L.~S. Moss and N.~Danner.
\newblock On the foundations of corecursion.
\newblock {\em Logic Journal of the IGPL}, 5:231--257, 1997.

\bibitem{PowerNPasting1991}
A.~Power.
\newblock An n-categorical pasting theorem.
\newblock In A.~Carboni, M.~C. Pedicchio, and G.~Rosolini, editors, {\em
  Category Theory}, pages 326--358, Berlin, Heidelberg, 1991. Springer Berlin
  Heidelberg.

\bibitem{VCatVGraph:Wolff:1973}
H.~Wolff.
\newblock V-cat and {V}-graph.
\newblock {\em Journal of Pure and Applied Algebra}, 4(2):123 -- 135, 1974.

\end{thebibliography}

\end{document}